\RequirePackage[2020-10-01]{latexrelease}
\documentclass{article}
\usepackage{hyperref}
\hypersetup{	
    colorlinks,
    citecolor=magenta,
    filecolor=black,
    linkcolor=blue,
    urlcolor=black
}
\usepackage[UKenglish]{babel}
\usepackage{amsmath,amssymb,enumerate,latexsym,makeidx,xcolor,amsthm,cleveref,graphicx,dsfont}

\usepackage{verbatim}

\usepackage{old-arrows}
\usepackage{bbm}
\makeindex

\catcode`\>=\active \def>{
\fontencoding{T1}\selectfont\symbol{62}\fontencoding{\encodingdefault}}
\newcommand{\assign}{:=}

\newcommand{\nin}{\not\in}
\newcommand{\nobracket}{}

\newcommand{\mathd}{\mathrm{d}}
\newcommand{\mathe}{\mathrm{e}}
\newcommand{\suchthat}{:}

\newcommand{\tmaffiliation}[1]{\\ #1}
\newcommand{\tmem}[1]{{\em #1\/}}
\newcommand{\tmop}
[1]{\ensuremath{\operatorname{#1}}}

\newcommand{\tmstrong}[1]{\textbf{#1}}

\newenvironment{descriptioncompact}{\begin{description} }{\end{description}}
\newenvironment{enumeratealpha}{\begin{enumerate}[a{\textup{)}}] }{\end{enumerate}}
\newenvironment{itemizedot}{\begin{itemize} }{\end{itemize}}
\renewenvironment{proof}{\noindent\textbf{Proof.\ }}{\hspace*{\fill}$\Box$\medskip}


\newcommand{\orange}[1]{{\color{orange}#1}}

\renewcommand{\phi}[0]{\varphi}
\numberwithin{equation}{section}




\renewcommand{\phi}[0]{\varphi}

\newcommand{\N}[0]{\mathbb{N}}

\DeclareMathOperator{\dom}{dom}

\theoremstyle{definition}
\newtheorem{definition}{Definition}[section]
\newtheorem{notation}[definition]{Notation}
\newtheorem{example2}[definition]{Example}
\newenvironment{example}{\begin{example2}}{\qed\end{example2}}
\newtheorem{remark2}[definition]{Remark}
\newenvironment{remark}{\begin{remark2}}{\qed\end{remark2}}
\theoremstyle{plain}
\newtheorem{theorem}[definition]{Theorem}
\newtheorem{proposition}[definition]{Proposition}
\newtheorem{lemma}[definition]{Lemma}
\newtheorem{corollary}[definition]{Corollary}

\newcommand{\Addresses}{{
		\bigskip
		\footnotesize
		
		V.~Bagayoko, \textsc{Institut de Mathématiques de Jussieu -- Paris Rive Gauche, 75013 Paris, France}\par\nopagebreak
		\textit{E-mail address}: \texttt{bagayoko@imj-prg.fr}
		
		\medskip
		
		L.~S.~Krapp, \textsc{Institut für Interdisziplinäre Sprachevolutionswissenschaft, 8050 Zürich, Switzerland \& Fachbereich Mathematik und Statistik, Universität Konstanz, 78457 Konstanz, Germany}\par\nopagebreak
		\textit{E-mail address}: \texttt{sebastian.krapp2@uzh.ch}
		
		\medskip
		
		S.~Kuhlmann, \textsc{Fachbereich Mathematik und Statistik, Universität Konstanz, 78457 Konstanz, Germany}\par\nopagebreak
		\textit{E-mail address}: \texttt{salma.kuhlmann@uni-konstanz.de}
		
		\medskip
		
		D.~Panazzolo, \textsc{Université de Haute-Alsace, Laboratoire de Mathématiques, Informatique et Applications, 68093 Mulhouse, France}\par\nopagebreak
		\textit{E-mail address}: \texttt{daniel.panazzolo@uha.fr}
		
		\medskip
		
		M.~Serra, \textsc{Fakultät für Mathematik, Technische Universität Dortmund, 44227 Dortmund, Germany}\par\nopagebreak
		\textit{E-mail address}: \texttt{michele.serra@tu-dortmund.de}
		
}}

\begin{document}

\title{Automorphisms and derivations on algebras endowed with formal infinite sums}

\author{
  Vincent Bagayoko
  \tmaffiliation{IMJ-PRG}
  \and
  Lothar Sebastian Krapp
  \tmaffiliation{Universität Zürich}
  \and
  Salma Kuhlmann
  \tmaffiliation{Universität Konstanz}
  \and
  Daniel Panazzolo
  \tmaffiliation{Université de Haute Alsace}
  \and
  Michele Serra
  \tmaffiliation{TU Dortmund}
}

\maketitle

\begin{abstract} 
    We establish a correspondence between automorphisms and derivations on certain algebras of generalised power series. In particular, we describe a Lie algebra of derivations on a field $k(\!(G)\!)$ of generalised power series (where $k$ is a field of characteristic $0$ and $G$ is a linearly ordered abelian group), 
    exploiting our knowledge of its group of valuation preserving automorphisms. The correspondence is given by the formal Taylor expansion of the exponential. In order to define the exponential map, we develop an appropriate notion of summability of infinite families in algebras. We show that there is a large class of algebras in which the exponential induces the above correspondence. 
\end{abstract}


\section*{Introduction}

\addcontentsline{toc}{section}{Introduction}

    Let $k$ be a field of characteristic $0$. 
    The automorphism group of the valued field $k(\!(t)\!)$ of Laurent series was studied in Schilling's classical paper \cite{schilling}. A derivation on $k(\!(t)\!)$ is a $k$-linear map $\partial\colon k(\!(t)\!) \longrightarrow k(\!(t)\!)$ satisfying the Leibniz product rule.
    The $k$-vector space of derivations on $k(\!(t)\!)$ becomes a Lie algebra once endowed with the Lie bracket $[\partial_1,\partial_2]:=\partial_1\circ\partial_2-\partial_2\circ\partial_1$. Let $\sigma$ and $\partial$ denote respectively  an automorphism and a derivation. Via the Taylor series of the logarithm and of the exponential,
    \begin{equation}
	\log (\sigma) = \sum_{n \geqslant 1} \frac{(-1)^{n+1}}{n} (\tmop{Id}-\sigma)^{[n]} \text{  and  } \exp (\partial) = \sum_{n \geqslant 0} \frac{1}{n!} \partial^{[n]} \label{eq-exp-log-intro}
   \end{equation}
    (where $\varphi^{[n]}$ denotes the $n^{\text{th}}$ iterate of a map $\varphi$), one obtains the fundamental relation between the group of automorphisms and the Lie algebra of derivations of $k(\!(t)\!)$ (cf.\ \cite[Chapter~3]{reutenauer}).
    
    Fields of generalised power series $k(\!(G)\!)$ with coefficients in $k$ and exponents in an arbitrary linearly ordered abelian group $G$ are instrumental to the valuation and model theory of fields (for example, when $G=\mathbb{Z}$, then $k(\!(G)\!)$ is just the Laurent series field).
    In \cite{KuhlSer:vaut}, the authors described the group of valuation preserving automorphisms of $k(\!(G)\!)$, as a semidirect product of four distinct factors.

    Our initial motivation for the present work was to describe the Lie algebra of derivations on $k(\!(G)\!)$, exploiting our knowledge of its group of valuation preserving automorphisms.
    To this end, we cast the problem into the following more general setting. Given a $k$-algebra $A$ (not necessarily commutative), we study the interplay between the Lie algebra of its derivations and the group of its automorphisms. Here, the essential issue is to give appropriate conditions under which the infinite sums in \eqref{eq-exp-log-intro} are indeed well-defined. To deal with this issue, we explore a general notion of summability in algebras, and of strongly linear maps (that is, linear maps that commute with infinite sums). We show that there is a large class of algebras in which the exponential and logarithm induce the desired correspondence for strongly linear $\partial$ and $\sigma$. 

    In \Cref{section-strong-vector-spaces} we introduce the axiomatic notion of vector spaces and algebras with a structure of formal summability, called summability spaces and summability algebras (see Definitions~\ref{def:summabilityspace} and~\ref{def:summabilityalgebra}). A related notion has been presented independently in a categorical framework by Freni \cite{Fre:strong}. We show that the algebra of strongly linear maps on a summability space inherits a natural structure of summability algebra (\Cref{prop:Linalgebra}). Our main collection of examples of summability spaces and algebras is that of spaces and algebras of Noetherian series, introduced in \Cref{subsection:Noetherianseries}. Those include algebras of polynomials, algebras of formal series in commuting or non-commuting variables, and fields of generalised power series $k(\!(G)\!)$. In \Cref{subsection-kJJ}, we recall the construction of the algebra of Noetherian series $k  \langle\! \langle J \rangle\! \rangle$, which is the algebra of formal series with coefficients in $k$ and non-commuting variables $ X_j, j \in J$. In \Cref{subsection-evaluations}, we introduce the notion of summability algebras with evaluations (see \Cref{def-closed}). This allows any power series in $k  \langle\! \langle J \rangle\! \rangle$ to be evaluated in such an algebra $A$ for each set $J$ (see \Cref{rem-evaluations}). More precisely, the algebra $A$ is local (\Cref{prop-closed-maximal-ideal}), and each summable family $\mathbf{a}=(\mathbf{a}_j)_{j \in J}$ ranging in its maximal ideal $\mathfrak{m}$ defines a unique strongly linear algebra morphism $\tmop{ev}_{\mathbf{a}}\colon k  \langle\! \langle J \rangle\! \rangle \longrightarrow A$ that maps each variable $X_j$ to $\mathbf{a}_j$.
 
    In \Cref{section-closed}, we apply evaluation morphisms for the univariate and bivariate cases only. In the univariate case (when $J$ is a singleton), we can identify
    $k  \langle\! \langle J \rangle\! \rangle$ with $k[\![X]\!]$ (the algebra of formal power series in the variable $X$). In particular, the formal power series
	\[ \log (1 + X) \assign \sum_{n
		\geqslant 1} \frac{(- 1)^{n + 1}}{n} X^n
	\text{ and }\exp (X) \assign \sum_{n \geqslant 0} \frac{1}{n!} X^n \]
    can be evaluated at each $\varepsilon \in \mathfrak{m}$.   Furthermore, the relations
	\[ \exp (\log (1+\varepsilon)) = 1+\varepsilon \ \text{ \  and   }\ \log
	(\exp (\varepsilon)) = \varepsilon \]
    follow (see \Cref{cor-exp-log-bij}) from evaluating the corresponding identities {\cite[Part I, Section~IV.7, Theorem~7.2]{Serre:Lie} in $k[\![X]\!]$ at $\varepsilon$. In the bivariate case, we likewise obtain (see \Cref{cor-BCH-operation}) the Baker--Campbell--Hausdorff formula (see \cite[Section~5.2]{Hall:Lie}
    or \cite[page~29]{Serre:Lie})
    \[ \log (\exp (\varepsilon) \cdot \exp (\delta)) = \varepsilon + \delta + \frac{1}{2}  [\varepsilon, \delta] + \frac{1}{12}  ([\varepsilon, [\varepsilon, \delta]] - [\delta, [\varepsilon, \delta]]) + \cdots \] for $\varepsilon,\delta \in \mathfrak{m}$. Our first main result is a correspondence between automorphisms and derivations (\Cref{th-der-aut-correspondence}) for all summability subalgebras (of
    strongly linear maps) with evaluations. 

    The purpose of \Cref{section-Noetherian-series} is to apply \Cref{th-der-aut-correspondence} to algebras of Noetherian series.  Given an algebra $\mathbb{A}$ of Noetherian series, we consider the algebra $k\tmop{Id}_{\mathbb{A}} + \mathfrak{m}$ where $\mathfrak{m}$ is the closed ideal of contracting strongly linear maps
    (see \Cref{def-contracting}). We show (\Cref{th-superclosed-End}) that this is an algebra with evaluations. Our second main result \Cref{th-der-aut-correspondence-Hahn-field} is that the exponential is a bijection between the subset of $\mathfrak{m}$ consisting of derivations and the subset of $\tmop{Id}_{\mathbb{A}}+\mathfrak{m}$ consisting of automorphisms of $\mathbb{A}$. In the case when $\mathbb{A}=k(\!(G)\!)$ is a field of Hahn series, this group is one of the three factors in the decomposition \cite{KuhlSer:vaut} of strongly linear valuation preserving automorphisms of $k(\!(G)\!)$ (\Cref{rem:1aut}). It follows in particular (\Cref{cor-divisible}) that this group is divisible and torsion-free. We derive from \Cref{th-der-aut-correspondence-Hahn-field} our third and final main result \Cref{th-Lie}, which moreover takes into account the Lie structure on the corresponding algebras. More precisely, we prove a formal analog of the Lie homomorphism theorem (cf.\ \cite[Theorem 5.6]{Hall:Lie}) for contracting strongly linear derivations and strongly linear automorphisms.

    In the \hyperref[Appendix]{Appendix}, we focus on the case where $k$ is an ordered exponential field (see \cite{Kuhl:Buch}), and we discuss the possibility of extending our correspondence to the other two factors in the semidirect decomposition of the group of valuation preserving automorphisms of $k(\!(G)\!)$.

\subsection*{Conventions and notations}

\addcontentsline{toc}{subsection}{Conventions and notations}

    \textbf{Throughout this work, we fix a field $k$. All vector spaces and algebras considered below are over $k$. All algebras are associative}, but they are not necessarily unital, nor commutative. 
    Whenever $k$ has to be of characteristic $0$, this will be specifically pointed out.
	
    We denote by $\N$ the set of natural numbers \emph{with} $0$ and by $\N^{>0}$ \emph{without} $0$.
    Let $X$ be a set. 
    The power set of $X$ is denoted by $\mathcal{P}(X)$ and its subset consisting of all finite subsets of $X$ is denoted by $\mathcal{P}_{\mathrm{fin}}(X)$. 
    Given some element $p\in X$, we let $\mathds{1}_p^X$ be the indicator map from $X$ to $k$, i.e.\ the map $\mathds{1}_p^X\colon X\longrightarrow k$ mapping $p$ to $1$ and everything else to $0$. If the set $X$ is clear from the context, we simply write $\mathds{1}_p$ instead of $\mathds{1}_p^X$.

    Given sets $A$ and $B$, we identify a map $f\colon A \longrightarrow B$ with its graph $\{(a,f(a)):a\in A\}\subseteq A\times B$ as well as with the family $(f(a))_{a\in A}$.
    The set of maps from $A$ to $B$ is denoted by $B^A$\label{autolab2}. If $B \subseteq C$, then $B^A \subseteq C^A$. Note also that~$A^{\emptyset} = \{\emptyset\}$.

    Recall that a {\textbf{partial ordering}}{\index{ordering}} on a set $\Omega$ is a binary relation $<$ on $\Omega$ such that for any $p, q, r \in \Omega$, we have
	\[ p \nless p \text{ and }[(p < q \wedge q < r)
		   \Longrightarrow p < r].\] 

    For subsets $A,B \subseteq \Omega$, we write $A<B$ if $a<b$ for all $(a,b) \in A \times B$. Likewise, for $a \in A$, we write $a<B$ if $\{a\} < B$ and $B<a$ if $B<\{a\}$.
     A \textbf{linear ordering} on $\Omega$ is a partial ordering $<$ additionally satisfying $$p<q\vee p=q\vee q<p$$ for any $p,q\in \Omega$.
     We say that $(\Omega, <)$ is an \textbf{partially ordered set} (respectively \textbf{linearly ordered set}) if $<$ is a partial ordering (respectively linear ordering) on $\Omega$. We consider subsets of a partially ordered set as partially ordered with the induced ordering. 
     For $p, q \in \Omega$, we write $p \leqslant q$ if $p = q$ or $p < q$. 
     If $u\colon \mathbb{N} \longrightarrow \Omega$ is a sequence, then we say that $u$ is {\bf{increasing}}{\index{increasing, strictly increasing sequence}} (respectively {\bf{strictly increasing}}) if for all $m, n \in \mathbb{N}$ with $m < n$, we have $u (m) \leqslant u (n)$ (respectively $u (m) < u (n)$). A {\bf{subsequence}}{\index{subsequence}} of $u$ is a sequence $v = u \circ \varphi\colon \mathbb{N} \longrightarrow \Omega$ where $\varphi\colon \mathbb{N} \longrightarrow \mathbb{N}$ is strictly increasing.

    If $I$ is a set and $(M, +, 0)$ is a {\bf{monoid}}{\index{monoid}} (i.e.\ an associative, unital magma), then the {\bf{support}}{\index{support of a monoid-valued map}} of $\mathbf{f} \in M^I$ is the subset\label{autolab3} \[ \tmop{supp} \mathbf{f} \assign \{ i \in I \suchthat \mathbf{f} (i) \neq 0 \} \] of $I$.
    
    Given a vector space $V$ and a set $I$, the set $V^I$ of maps $\mathbf{v}\colon I \longrightarrow V$ is equipped with its natural vector space structure (pointwise operations). Note that the subset $V^{(I)}$ of maps with finite support is a subspace of $V^I$. Given two vector spaces $V$ and $W$, we write $\tmop{Lin} (V, W)$ for the vector space of linear maps $V \longrightarrow W$, and we denote the space $\tmop{Lin} (V, V)$ of endomorphisms on $V$ by $\tmop{Lin} (V)$. A {\emph{unital}} algebra $A$ is called {\bf{local}}{\index{local algebra}} if one of the following equivalent assertions is satisfied {\cite[Theorem~19.1]{Lam:non-com}}:
    
    \begin{itemizedot}
        \item $A$ has a unique maximal left ideal;
  
        \item $A$ has a unique maximal right ideal;
  
        \item the set $A\setminus U(A)$ of non-units in $A$ is an ideal of $A$.
    \end{itemizedot}
    
    In this case, the maximal left and right ideals are equal to $A \setminus U(A)$. Given an algebra $(A, +, \cdot)$ and $a, b \in A$, we write\label{autolab4} \[ [a, b] \assign a \cdot b - b \cdot a \in A, \] and we recall that $(A, +, [\cdot, \cdot])$ is a Lie algebra.

\section{Formal summability in algebras}\label{section-strong-vector-spaces}

\subsection{Summability spaces}\label{subsection-strong-vector-spaces}

    Freni {\cite[Chapter 2]{Fre:strong}} introduced a category $\Sigma \tmop{Vect}$ whose objects are vector spaces equipped with a notion of formal sums. We propose an axiomatic description of the same spaces (see \cite[Theorem 2.2.26]{Fre:strong}}), in the vein of {\cite[Section~6.2]{vdH:phd}}, that is more tailored to our purposes.
    
    Let $V$ be a vector space. We generalise the notion of finite summation operators \[ 
				\Sigma_n \colon V^n\longrightarrow V, \  (v_0,\ldots,v_{n-1}) \mapsto v_0+\cdot \cdot \cdot + v_{n-1}
				\] 
    to abstract sums $\Sigma_I \mathbf{v} \in V$ of families $\mathbf{v} \colon I \longrightarrow V$ indexed by possibly infinite sets $I$. Not all families $\mathbf{v}$ in the space $V^I$ can be summed in a consistent way: if $I$ is infinite and $v \in  V \setminus \{0\}$, then the constant family $\mathbf{v}=(v)_{i \in I}$ cannot be summed. We thus have to introduce axioms specifying the intended properties of sets $\dom \Sigma_I \subseteq V^I$ of summable families in conjunction with properties of summation operators $\Sigma_I$. 

    \begin{definition}\label{def:summabilityspace}
        For any set $I$, let $\Sigma_I$ be a $k$-linear map\label{autolab5} \[ \Sigma_I\colon \tmop{dom} \Sigma_I \longrightarrow V \] whose domain $\tmop{dom} \Sigma_I$\label{autolab6} is a subspace of the vector space $V^I$. 
        We say that $\Sigma = (\Sigma_I)_I$ (where $I$ ranges in the class of all sets)\label{autolab8} is a
        {{{\bf{summability structure}}}}{\index{summability structure}} on $V$, or
        that $(V, \Sigma)$ is a {{{\bf{summability
        space}}}}{\index{summability space}}, if $(V, \Sigma)$
        satisfies the axioms \hyperref[ss1]{(\textbf{SS1})} to \hyperref[ss4]{\textbf{(SS4)}} below for any sets $I$ and $J$.

        \begin{itemize}
            \item[($\textbf{SS1}$)] \label{ss1}$V^{(I)} \subseteq \tmop{dom} \Sigma_I$ and $\Sigma_I \mathbf{v} = \sum_{i \in \tmop{supp} \mathbf{v}} \mathbf{v} (i)$ for any $\mathbf{v} \in V^{(I)}$.
  
            \item[($\textbf{SS2}$)] \label{ss2} If $\varphi\colon I \longrightarrow J$ is a bijective map and $\mathbf{v} \in \tmop{dom} \Sigma_J$, then $\mathbf{v} \circ \varphi \in \tmop{dom} \Sigma_I$, and $\Sigma_I (\mathbf{v} \circ \varphi) = \Sigma_J \mathbf{v}$.
  
            \item[($\textbf{SS3}$)] \label{ss3} If $I = \bigsqcup_{j \in J} I_j$\label{autolab7} for some family of pairwise disjoint sets $(I_j)_{j\in J}$ and $\mathbf{v} \in \tmop{dom} \Sigma_I$, then writing $\mathbf{v}_j \assign \mathbf{v}\!\!\upharpoonright_{I_j}$ for each $j \in J$, we have
  
            \begin{descriptioncompact}
                \item[($\textbf{SS3a}$)] \label{ss3a}$\mathbf{v}_j \in \tmop{dom} \Sigma_{I_j}$ for any $j \in J$,
            
                \item[($\textbf{SS3b}$)] \label{ss3b}$( \Sigma_{I_j} \mathbf{v}_j )_{j \in J}  \in \tmop{dom} \Sigma_J$, and
            
                \item[($\textbf{SS3c}$)] \label{ss3c}$\Sigma_I \mathbf{v} = \Sigma_J \left( ( \Sigma_{I_j} \mathbf{v}_j )_{j \in J} \right)$.
            \end{descriptioncompact}
  
            \item[($\textbf{SS4}$)] \label{ss4} If $I = I_1 \sqcup I_2$ for some sets $I_1$, $I_2$ and $(\mathbf{v}, \mathbf{w}) \in \tmop{dom} \Sigma_{I_1} \times \tmop{dom} \Sigma_{I_2}$, then the map $(\mathbf{v} \sqcup \mathbf{w})\colon I \longrightarrow V$ given by $(\mathbf{v} \sqcup \mathbf{w}) (i_1) \assign \mathbf{v} (i_1)$ and $(\mathbf{v}  \sqcup  \mathbf{w}) (i_2) = \mathbf{w} (i_2)$ for any $i_1 \in I_1$ and $i_2 \in I_2$ lies in $\tmop{dom} \Sigma_I$.

        \end{itemize}

        \noindent If, moreover, axiom \hyperref[uss]{(\textbf{UF})} below is satisfied for any $I$, then the summability space $(V,\Sigma)$ is called \textbf{ultrafinite}{\index{ultrafiniteness}}.

        \begin{itemize}
            \item[($\textbf{UF}$)] \label{uss}For any $\mathbf{v} \in \tmop{dom} \Sigma_I$ and     any family $f=(f_i)_{i \in I}$ of $k$-valued maps              $f_i\colon X_i\longrightarrow     k$     with \emph{finite} domains $X_i$,       writing $I f := \{ (i, x) \colon i \in I \wedge     x \in X_i \}$, the family $f\mathbf{v} \assign (f_i (x) \mathbf{v} (i))_{(i,x) \in     If}$ lies in $\tmop{dom} \Sigma_{I f}$.
        \end{itemize}

        \noindent For a summability space $(V,\Sigma)$ and any set $I$, a family $\mathbf{v}\in V^I$ is called \textbf{summable} (in $(V,\Sigma)$) if $\mathbf{v}\in \dom \Sigma_I$.

    \end{definition}

    \begin{remark}
        \begin{enumerate}[(i)]
            \item For our later application, it is convenient to let the index sets $I$ in \Cref{def:summabilityspace} range over all sets. While on a formal set theoretic level, this will entail working with proper classes, in practice it will simply lead to an ease of notation without resulting in any foundational issues.

            \item The reader can see that these axioms generalise various properties of finite sums, including commutativity (or invariance under reindexing) \hyperref[ss2]{\textbf{(SS2)}} and associativity \hyperref[ss3]{\textbf{(SS3)}} of the sum.
    
            \end{enumerate}    
            
    \end{remark}


    \textbf{Throughout the rest of \Cref{section-strong-vector-spaces}, let $(V,\Sigma)$ denote a summability space.}
 
    \begin{notation}
        For a set $I$ and a map $\mathbf{v}\in V^I$ we also  write
        $\sum_{i \in I} \mathbf{v} (i)$ for $\Sigma_I \mathbf{v}$.
    \end{notation}

    Our first lemma establishes further associativity and distributivity results.
    
    \begin{lemma}\label{prop-union-sum}
        Let $I$, $I_1$ and $I_2$ be sets, let $(X_i)_{i\in I}$ be a family of finite sets, and for each $i\in I$, let $f_i\colon X_i\longrightarrow k$.
        Then the following hold:
		\begin{enumerate}[(i)]
			
			\item \label{prop-union-sum:1} If $I = I_1 \sqcup I_2$, then for any $(\mathbf{v}, \mathbf{w}) \in
			\tmop{dom} \Sigma_{I_1} \times \tmop{dom} \Sigma_{I_2}$ we have  
			$\Sigma_I  (\mathbf{v} \sqcup \mathbf{w}) =
			\Sigma_{I_1} \mathbf{v} + \Sigma_{I_2} \mathbf{w}$.
			
			\item  \label{rem-ultrafinite:1} 
			If $(V,\Sigma)$ is ultrafinite, then for any $\mathbf{v}\in \Sigma_I$ also $\big( ( \sum_{x \in
					X_i} f_i(x) ) \mathbf{v} (i) \big)_{i \in I} \in \Sigma_I$ and 
				\[ 
				\Sigma_{I f} (f \mathbf{v})=\sum_{i\in I} \left(\sum_{x \in
					X_i} f_i(x)\right ) \mathbf{v} (i).
				\]

		\end{enumerate}
	\end{lemma}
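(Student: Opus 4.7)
For part (i), the plan is to combine \hyperref[ss4]{\textbf{(SS4)}} with \hyperref[ss3]{\textbf{(SS3)}} applied to the two-element partition $I = I_1 \sqcup I_2$ indexed by $J = \{1,2\}$. Axiom \hyperref[ss4]{\textbf{(SS4)}} immediately gives $\mathbf{v} \sqcup \mathbf{w} \in \dom \Sigma_I$. Since $(\mathbf{v} \sqcup \mathbf{w})\!\!\upharpoonright_{I_1} = \mathbf{v}$ and $(\mathbf{v} \sqcup \mathbf{w})\!\!\upharpoonright_{I_2} = \mathbf{w}$, axiom \hyperref[ss3c]{\textbf{(SS3c)}} yields $\Sigma_I (\mathbf{v} \sqcup \mathbf{w}) = \Sigma_J \bigl( (\Sigma_{I_1} \mathbf{v}, \Sigma_{I_2} \mathbf{w}) \bigr)$. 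Because $J$ is finite, the family on the right lies in $V^{(J)}$, so \hyperref[ss1]{\textbf{(SS1)}} evaluates this outer sum to the ordinary finite sum $\Sigma_{I_1} \mathbf{v} + \Sigma_{I_2} \mathbf{w}$.

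For part (ii), the plan is to first invoke \hyperref[uss]{\textbf{(UF)}} to obtain $f\mathbf{v} \in \dom \Sigma_{If}$, and then to apply \hyperref[ss3]{\textbf{(SS3)}} to the natural partition $If = \bigsqcup_{i \in I} (\{i\} \times X_i)$. For each $i \in I$, the restriction $(f \mathbf{v})\!\!\upharpoonright_{\{i\} \times X_i}$ has finite support since $X_i$ is finite, and after the canonical reindexing $\{i\} \times X_i \longleftrightarrow X_i$ (justified by \hyperref[ss2]{\textbf{(SS2)}}) axiom \hyperref[ss1]{\textbf{(SS1)}} computes its $\Sigma$ to the plain finite sum $\sum_{x \in X_i} f_i(x) \mathbf{v}(i)$, which by $k$-linearity equals $\bigl(\sum_{x \in X_i} f_i(x)\bigr) \mathbf{v}(i)$. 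Then \hyperref[ss3b]{\textbf{(SS3b)}} gives the summability of the family $\bigl(\bigl(\sum_{x \in X_i} f_i(x)\bigr) \mathbf{v}(i)\bigr)_{i \in I}$ and \hyperref[ss3c]{\textbf{(SS3c)}} yields the claimed identity.

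The whole proof is straightforward axiom-chasing; there is no substantive obstacle. The only mild bookkeeping point is to recognise that the outer two-element sum in part (i) and the inner finite sums in part (ii) both reduce to ordinary finite sums via \hyperref[ss1]{\textbf{(SS1)}} (for part (ii), via an \hyperref[ss2]{\textbf{(SS2)}} reindexing), and that the hypothesis of ultrafiniteness is invoked only once, precisely to obtain summability of $f \mathbf{v}$ before \hyperref[ss3]{\textbf{(SS3)}} is applied.
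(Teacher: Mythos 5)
Your proposal is correct and follows essentially the same route as the paper: \hyperref[ss4]{\textbf{(SS4)}} plus \hyperref[ss3]{\textbf{(SS3)}} plus \hyperref[ss1]{\textbf{(SS1)}} for part (i), and \hyperref[uss]{\textbf{(UF)}} followed by \hyperref[ss3]{\textbf{(SS3)}} over the partition $If = \bigsqcup_{i\in I}(\{i\}\times X_i)$ and \hyperref[ss1]{\textbf{(SS1)}} for part (ii). Your explicit mention of the \hyperref[ss2]{\textbf{(SS2)}} reindexing of $\{i\}\times X_i$ onto $X_i$ is a small point of care that the paper leaves implicit, but it changes nothing substantive.
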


	\begin{proof}
		\begin{enumerate}[(i)]
				
			\item 	Let $\mathbf{u}=(\mathbf{v}\sqcup \mathbf{w})$. Then $\mathbf{u}_1=\mathbf{v}$ and $\mathbf{u}_2 = \mathbf{w}$. By \hyperref[ss1]{\textbf{SS1}}, \hyperref[ss3]{\textbf{SS3}} and \hyperref[ss4]{\textbf{SS4}},$$
			\Sigma_I  \mathbf{u} = \Sigma_{\{1,2\}} (\Sigma_{I_j}\mathbf{u}_j)_{j\in \{1,2\}} =\Sigma_{I_1}\mathbf{u}_1 + \Sigma_{I_2}\mathbf{u}_2 =   \Sigma_{I_1} \mathbf{v} + \Sigma_{I_2} \mathbf{w}.
			$$
			
			\item 
			For any $i\in I$, let $I_j = \{i\} \times X_i$. Then $I f= \bigsqcup_{i \in I} I_i$. 
			Applying \hyperref[ss3]{\textbf{SS3}}, \hyperref[uss]{\textbf{UF}} and \hyperref[ss1]{\textbf{SS1}}, we obtain 
			\begin{align*}\Sigma_{I f}(f \mathbf{v}) &= \Sigma_I \left( (
			\Sigma_{I_i} (f_i(x)\mathbf{v}(i))_{(i,x)\in \{i\}\times X_i} )_{i \in I} \right)=\sum_{i\in I}  \left(
			\sum_{x\in X_i} f_i(x)\mathbf{v}(i)\right)\\&= \sum_{i\in I} \left(\sum_{x \in
				X_i} f_i(x)\right ) \mathbf{v} (i).\end{align*}
		\end{enumerate}

    \end{proof}

    We now give some examples of summability spaces.

    \begin{example}
    \label{ex-minimal-strong}
        Any vector space $V$ can be endowed with a summability
        structure $\Sigma^{\min}$ called the {\bf{minimal summability structure}}{\index{minimal summability
        structure}}. It is given by 
        \[ \tmop{dom} \Sigma_I^{\min} = V^{(I)}\text{ and }\Sigma_I^{\min} \mathbf{v} =
        \sum_{i \in \tmop{supp} \mathbf{v}} \mathbf{v} (i) \]
        for any set $I$ and any  $\mathbf{v} \in V^{(I)}$.
        The summability space $(V,\Sigma^{\min})$ is ultrafinite. \end{example}

    \begin{example}
    \label{ex-strong-Banach-space}
        Assume that $k =\mathbb{R}$ or $k
        =\mathbb{C}$. Let $\left( V, \left| \hspace{0.8em} \right| \right)$ be a
        Banach space with norm $\left| \hspace{0.8em} \right|\colon V
        \longrightarrow \mathbb{R}^{\geqslant 0}$. For any set $I$, define
        $\tmop{dom} \Sigma_I$ to be the set of families $\mathbf{v} \colon I \longrightarrow V$
        that either have finite support or have countably infinite support such that, given a bijection $i \colon \N
        \longrightarrow \tmop{supp} \mathbf{v}$, the series
        $\sum_{n=0}^\infty | \mathbf{v} (i (n)) |$ converges.
        For $\mathbf{v}\in V^I$ with countably infinite support, we then define $\sum_I \mathbf{v} = \sum_{n=0}^\infty \mathbf{v} (i
        (n))$. Note that this sum does not depend on the choice of bijection $i$.
        
        This summability structure is {\tmem{not}} ultrafinite in
        general. Indeed if $V$ is non-trivial, then for any non-zero $v \in V$, the
        family $\left( \frac{v}{n!} \right)_{n \in \mathbb{N}}$ is summable, whereas
        $(vn!)_{n \in \mathbb{N}}$ is not.
    \end{example}

    We next introduce our main motivating example for summability spaces.

    \begin{example}
    \label{ex-Hahn-space} 
        Let $(\Gamma,<)$ be a non-empty linearly ordered set. Let $\mathbf{H}_{\Gamma} k$ denote the \textbf{Hahn product} of the constant family $(k)_{\gamma \in \Gamma}$, i.e.\ the subspace of $k^\Gamma$ 
        given by $$\mathbf{H}_{\Gamma} k = \{a\in k^\Gamma \colon \tmop{supp} a \text{ is well-ordered}\}.$$
        We also express $a\in \mathbf{H}_{\Gamma}$ as the sum
        $$a = \sum_{\gamma\in \Gamma}a_\gamma \mathds{1}_\gamma,$$
        where $a_\gamma=a(\gamma)$. 
        A natural summability structure $\Sigma$ on $\mathbf{H}_{\Gamma} k$ is obtained as follows. For a set $I$, we define $\tmop{dom} \Sigma_I$ to be the set of maps $\mathbf{v} \colon I \rightarrow \mathbf{H}_{\Gamma} k$ such that the subset $\bigcup \limits_{i \in I} \operatorname{supp} \mathbf{v}(i)$ of $\Gamma$ is well-ordered, and that, for each $\gamma \in \Gamma$, the set $I_{\mathbf{v},\gamma}\assign\{i \in I \ \colon \ \gamma \in \operatorname{supp} \mathbf{v}(i)\}$ is finite. For $\mathbf{v} \in \tmop{dom} \Sigma_I$, we define $\Sigma_I \mathbf{v} \in \mathbf{H}_{\Gamma} k$ to be 

        \[\Sigma_I\mathbf{v} = \sum_{\gamma\in \Gamma}\left(\sum \limits_{i \in I_{\mathbf{v},\gamma}} \mathbf{v}(i)\right)\mathds{1}_\gamma,\]        
        whose support is indeed well-ordered, as it is contained in $\bigcup \limits_{i \in I} \operatorname{supp} \mathbf{v}(i)$. We leave it to the reader to verify that $(\mathbf{H}_{\Gamma} k,\Sigma)$ satisfies the axioms of ultrafinite summability spaces (this will also follow from the more general result \Cref{prop-gloub-spaces}).
    \end{example}

    Let us introduce a natural notion of subspace for summability spaces.
    \begin{definition}\label{def:closedrestriction} We say that a subspace $W\subseteq V$ is {{{\tmstrong{closed}}}} (under $\Sigma$) if for any
            set $I$ and any $\mathbf{w} \in \tmop{dom} \Sigma_I \cap W^I$, we have
            $\Sigma_I \mathbf{w} \in W$.
            
            If $W$ is a closed subspace of $V$, then we define the summability structure $\Sigma^W$ on $W$ by
            $\tmop{dom} \Sigma_I^W \assign \tmop{dom} \Sigma_I \cap W^I \text{ and }\Sigma_I^W \mathbf{w} \assign \Sigma_I \mathbf{w}$
            for any set $I$ and any $\mathbf{w} \in \tmop{dom} \Sigma^W_I$.

    \end{definition}

    We leave it to the reader to check that $(W,
    \Sigma^W)$ in \Cref{def:closedrestriction} is indeed a summability space, and that $(W, \Sigma^W)$ is ultrafinite
    if $(V, \Sigma)$ is ultrafinite.

    \begin{example} If $(\Gamma,<)$ is as in \Cref{ex-Hahn-space} and $\Gamma$ is infinite, then the subspace of $\mathbf{H}_{\gamma  \in \Gamma} k$ of functions $\Gamma \longrightarrow k$ with finite support is not a closed subpsace of $\mathbf{H}_{\gamma  \in \Gamma} k$. Indeed, the family $(\mathds{1}_{\{\gamma\}})_{\gamma \in \Gamma}$ of indicator functions for each singleton $\{\gamma\},\gamma \in \Gamma$ is summable in $\mathbf{H}_{\gamma  \in \Gamma_0} k$ but its sum has infinite support.
 However, given a subset $\Gamma_0$, the subspace $\mathbf{H}_{\gamma  \in \Gamma_0} k \subseteq \mathbf{H}_{\gamma  \in \Gamma} k$ is a closed subspace.

\end{example}

\subsection{Noetherian series}\label{subsection:Noetherianseries}
We now introduce a central example of summability spaces called spaces of Noetherian series. These series generalize the concept of Hahn series and provide the most general setting where we will establish the derivation-automorphism correspondence in \Cref{section-Noetherian-series}.
    
    \begin{definition}
    \label{def-Noetherian}
        Let $(\Omega, <)$ be a partially ordered set. We call $X\subseteq \Omega$ a  \textbf{chain} in $(\Omega, <)$ if it is linearly ordered by $<$. A chain $X$ in $(\Omega, <)$ is
        \textbf{decreasing} if it has no minimal element, i.e.\
        for any $x\in X$ there exists $z\in X$ with $z<x$. We call $Y\subseteq \Omega$ an \textbf{antichain} in $(\Omega, <)$ if any two distinct elements of $Y$ are incomparable, i.e.\ for any $y,z\in Y$ we have $y\not < z$.
        We say that $(\Omega, <)$ is
        {\tmstrong{{\bf{Noetherian}}}}{\index{Noetherian ordering}}, or that $<$
        is a Noetherian ordering on $\Omega$, if any decreasing
        chain in $(\Omega, <)$ is finite and any antichain in $(\Omega, <)$ is finite.
    \end{definition}

    Note that Noetherian linear orderings are exactly well-orderings.

    \begin{proposition}
    \label{prop-Noetherian-subsequence}{\tmem{{\cite[Proposition~A.1]{vdH:phd}}}}
        A partially ordered set $(\Omega, <)$ is Noetherian if and only if every sequence $u\colon \mathbb{N} \longrightarrow \Omega$ has an increasing subsequence.
    \end{proposition}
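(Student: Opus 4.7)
The two directions are essentially independent. The plan is to handle the converse by contraposition (a witness to non-Noetherianness directly produces a sequence with no increasing subsequence) and the forward direction via the infinite Ramsey theorem applied to a three-colouring of pairs.

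For the converse ($\Leftarrow$), suppose $(\Omega,<)$ is not Noetherian. If $X \subseteq \Omega$ is an infinite decreasing chain, dependent choice yields a strictly descending sequence $u(0) > u(1) > \cdots$ inside $X$; any subsequence is still strictly descending, so $u(m) \not\leqslant u(n)$ whenever $m < n$ and no increasing subsequence exists. If instead $Y \subseteq \Omega$ is an infinite antichain, any injective enumeration $u$ of $Y$ has $u(m), u(n)$ distinct and incomparable whenever $m \neq n$, so again $u(m) \not\leqslant u(n)$ and no increasing subsequence exists.

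For the forward direction ($\Rightarrow$), let $u\colon \mathbb{N} \longrightarrow \Omega$ be a sequence in a Noetherian poset. I would colour every pair $\{i,j\} \subseteq \mathbb{N}$ with $i < j$ by
\[ c(\{i,j\}) = \begin{cases} \mathrm{I} & \text{if } u(i) \leqslant u(j), \\ \mathrm{D} & \text{if } u(i) > u(j), \\ \mathrm{A} & \text{if } u(i), u(j) \text{ are incomparable}, \end{cases} \]
noting that exactly one colour applies to each pair. The infinite Ramsey theorem supplies an infinite $S \subseteq \mathbb{N}$ on which $c$ is constant. The constant value $\mathrm{D}$ would give an infinite strictly descending chain and the value $\mathrm{A}$ an infinite antichain in $\Omega$, both forbidden by Noetherianness. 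Hence the constant value must be $\mathrm{I}$, and enumerating $S$ in increasing order yields the required increasing subsequence of $u$. The hard part, such as it is, is conceptual rather than technical: one must recognise that the natural trichotomy for pairs in a poset aligns precisely with the two structural obstructions (decreasing chains, antichains) appearing in the definition of Noetherianness. The boundary case $u(i) = u(j)$ is harmlessly absorbed into colour $\mathrm{I}$ because the conclusion asks for an increasing, not a strictly increasing, subsequence.
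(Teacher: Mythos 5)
Your proof is correct. The paper itself gives no proof of this proposition, citing it from van der Hoeven's thesis (Proposition A.1); your argument is the standard one for this well-quasi-ordering characterisation. The converse by contraposition (an infinite decreasing chain or infinite antichain immediately yields a sequence with no increasing subsequence) and the forward direction via the infinite Ramsey theorem on the three-colouring $\{\mathrm{I},\mathrm{D},\mathrm{A}\}$ are both sound; in particular you correctly absorb the case $u(i)=u(j)$ into colour $\mathrm{I}$, which is what makes an $\mathrm{A}$-homogeneous set a genuine antichain and matches the paper's non-strict notion of increasing sequence.
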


    \begin{definition}
        Let $(\Omega, <)$ be a partially ordered set. A \textbf{bad sequence} in $(\Omega,<)$ is a sequence $u\colon \mathbb{N}
        \longrightarrow \Omega$ such that for any $i, j \in \mathbb{N}$ with
        $i < j$ we have $u_i \not\leq u_j$. Given a map $f\colon \Omega \longrightarrow
        \mathbb{N}$, a bad sequence $u$ in $\Omega$ is \textbf{minimal} for $f$ if for any bad sequences $v$ in $\Omega$ and any $i \in
        \mathbb{N}$ with $(v_0, \ldots, v_{i -
        1}) = (u_0, \ldots, u_{i - 1})$, we have $f (v_i) \geq f (u_i)$. 
    \end{definition}

\begin{lemma}\label{lem:minbadsequence} Let $(\Omega,<)$ be a partially ordered set and let $f\colon \Omega \longrightarrow \mathbb{N}$ be a function. If there is a bad sequence in $\Omega$, then there is a minimal one for $f$.
\end{lemma}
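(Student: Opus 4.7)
The plan is to construct a minimal bad sequence by recursion, at each stage choosing the next term so as to minimize $f$ among all elements that can still be extended to a bad sequence.

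First, define by recursion on $i \in \mathbb{N}$ a sequence $u = (u_i)_{i \in \mathbb{N}}$ as follows. Suppose $u_0,\ldots,u_{i-1}$ have been defined such that there exists a bad sequence in $\Omega$ whose first $i$ terms are $u_0,\ldots,u_{i-1}$ (for $i=0$ this holds by the hypothesis that a bad sequence exists). Let
\[
S_i \assign \{ x \in \Omega \suchthat \text{there exists a bad sequence $v$ with } v_0=u_0,\ldots,v_{i-1}=u_{i-1},\ v_i=x\}.
\]
By the inductive hypothesis, $S_i$ is non-empty, so the set $f(S_i) \subseteq \mathbb{N}$ is non-empty, and by the well-ordering of $\mathbb{N}$ it has a least element. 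Choose $u_i \in S_i$ such that $f(u_i) = \min f(S_i)$. By the very definition of $S_i$ there is then a bad sequence extending $u_0,\ldots,u_i$, so the recursion continues.

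Next I would verify that the sequence $u$ so constructed is itself a bad sequence. Suppose for contradiction that there exist $i < j$ with $u_i \leqslant u_j$. Let $v$ be the bad sequence witnessing that $u_j \in S_j$, so $v_0=u_0,\ldots,v_{j-1}=u_{j-1}$ and $v_j=u_j$. Then $v_i = u_i \leqslant u_j = v_j$ with $i<j$, contradicting the badness of $v$. Hence $u$ is bad.

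Finally, $u$ is minimal for $f$: if $v$ is any bad sequence and $i \in \mathbb{N}$ satisfies $(v_0,\ldots,v_{i-1}) = (u_0,\ldots,u_{i-1})$, then by definition $v_i \in S_i$, whence $f(v_i) \geqslant f(u_i)$ by the minimality of the choice of $u_i$. The only subtle point is the recursive construction itself, which uses the axiom of dependent choice to make countably many successive choices; there is no genuine combinatorial obstacle, the argument being the standard Nash--Williams minimal bad sequence construction.
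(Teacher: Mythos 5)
Your proof is correct and follows essentially the same route as the paper's: a recursive construction (via dependent choice) picking each term to minimize $f$ among all elements extending the current prefix to a bad sequence, i.e.\ the standard Nash--Williams minimal bad sequence argument. You additionally spell out the verification that the resulting sequence is itself bad and minimal, which the paper leaves as ``by construction''.
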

    
    \begin{proof} We define a minimal bad sequence $(v_n)_{n \in \mathbb{N}}$ by induction (using dependent choice). For each $n \in \mathbb{N}$, we ensure that \begin{itemizedot}
        \item there is a a bad sequence in $\Omega$ which extends $(v_i)_{i <n}$,
        \item each $f(v_i)$ for $i<n$ is among values $f(u_i)$ where $(u_p)_{p \in \mathbb{N}}$ is a bad sequence in $\Omega$.
    \end{itemizedot} 
    If $n$ is such that the conditions hold at $n$, then we pick $v_n$ to be any element of $\Omega$ with minimal value $f(v_n)$ among $(n+1)$-th elements of bad sequences extending $(v_i)_{i <n}$. By construction, any such sequence $(v_i)_{i \in \mathbb{N}}$ is a minimal bad sequence for $f$. 
\end{proof}   
    
    \begin{lemma}
    \label{lem-Noeth-subsequence}{\tmem{{\cite[Theorem~2.1]{Hig52}}}} 
        Let $(\Omega,<)$ be a partially ordered set. Then $(\Omega, <)$ is Noetherian if and only if there is no bad sequence in $\Omega$.
    \end{lemma}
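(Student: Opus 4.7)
The plan is to prove the two implications separately, both following essentially at once from Proposition~\ref{prop-Noetherian-subsequence} and the definitions of ``bad sequence'', ``decreasing chain'' and ``antichain''.

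For the forward direction, I would assume $(\Omega, <)$ is Noetherian and argue by contradiction: suppose $u \colon \mathbb{N} \longrightarrow \Omega$ is a bad sequence. By Proposition~\ref{prop-Noetherian-subsequence}, $u$ admits an increasing subsequence $u \circ \varphi$ for some strictly increasing $\varphi \colon \mathbb{N} \longrightarrow \mathbb{N}$. In particular $\varphi(0) < \varphi(1)$ and $u(\varphi(0)) \leqslant u(\varphi(1))$, directly contradicting the hypothesis that $u$ is bad.

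For the reverse direction, I would proceed by contrapositive: assuming $(\Omega, <)$ is not Noetherian, I produce a bad sequence. By the definition of Noetherian, either some chain $X \subseteq \Omega$ has no minimal element, or some antichain $Y \subseteq \Omega$ is infinite. In the first case, dependent choice yields a strictly decreasing sequence $x_0 > x_1 > x_2 > \cdots$ in $X$: pick $x_0 \in X$ arbitrarily and, given $x_n$, pick $x_{n+1} \in X$ with $x_{n+1} < x_n$. For $i < j$ we then have $x_j < x_i$, so $x_i \not\leqslant x_j$, and $(x_n)_{n \in \N}$ is bad. In the second case, since $Y$ is infinite we can enumerate distinct elements $y_0, y_1, y_2, \ldots \in Y$ by dependent choice; the pairwise incomparability of $Y$ gives $y_i \not\leqslant y_j$ whenever $i < j$, so $(y_n)_{n \in \N}$ is bad.

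The main obstacle is essentially nil: once Proposition~\ref{prop-Noetherian-subsequence} is at our disposal, both directions reduce to unwinding definitions together with a routine use of dependent choice for the recursive extractions in the reverse direction. The only place that calls for a moment's care is checking that the constructed sequences are genuinely bad, which amounts to noting that strict inequality $x_j < x_i$ and strict incomparability each imply $\not\leqslant$ in the required sense.
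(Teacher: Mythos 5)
Your proof is correct. The paper itself gives no argument for this lemma --- it is stated with a citation to Higman --- so there is nothing to compare against; your derivation (forward direction from Proposition~\ref{prop-Noetherian-subsequence}, reverse direction by extracting a bad sequence directly from an infinite decreasing chain or an infinite antichain via dependent choice) is the standard one and all the checks of $\not\leqslant$ go through with the paper's definitions. One cosmetic point: in the reverse direction, the negation of Noetherian gives an \emph{infinite} decreasing chain (equivalently, a nonempty chain without a minimal element), not merely ``some chain with no minimal element'' --- the empty chain vacuously has no minimal element --- but your construction implicitly assumes nonemptiness anyway, so this is a phrasing issue rather than a gap.
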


See {\cite[Theorem~2.1]{Hig52}} for other characterisations of Noetherian
orderings.

\begin{proposition}
  \label{prop-Noetherian-gloub}  Let $(\Omega, <)$ be a partially ordered set. Let $A,B$ be Noetherian subsets of $\Omega$. Then $A\cup B$ is Noetherian. Moreover, if $C \subseteq A$, then $C$ is Noetherian.
\end{proposition}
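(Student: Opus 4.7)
The plan is to reduce both statements to the Ramsey-type characterisation from \Cref{prop-Noetherian-subsequence}, namely that a partially ordered set is Noetherian if and only if every sequence admits an increasing subsequence. (Equivalently, I could use \Cref{lem-Noeth-subsequence} and argue in terms of bad sequences; the two approaches are interchangeable.)

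For the second claim (the easy one), I would simply observe that any sequence $u\colon \mathbb{N} \longrightarrow C$ is in particular a sequence in $A$, so by \Cref{prop-Noetherian-subsequence} applied to $A$ it admits an increasing subsequence. Applying \Cref{prop-Noetherian-subsequence} in the other direction to $C$ yields that $C$ is Noetherian. It is convenient to dispatch this case first, since it will then allow me to pass freely to subsets of $A$ or $B$ in the first part.

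For the first claim, let $u\colon \mathbb{N} \longrightarrow A \cup B$ be an arbitrary sequence. By the pigeonhole principle, at least one of the sets $\{n \in \mathbb{N} : u(n) \in A\}$ or $\{n \in \mathbb{N} : u(n) \in B\}$ is infinite; say the former, and enumerate it in strictly increasing order as $\varphi\colon \mathbb{N} \longrightarrow \mathbb{N}$. Then $u \circ \varphi$ is a sequence in $A$, and since $A$ is Noetherian, by \Cref{prop-Noetherian-subsequence} it admits an increasing subsequence $u \circ \varphi \circ \psi$. This is in turn an increasing subsequence of the original sequence $u$. Applying \Cref{prop-Noetherian-subsequence} once more, $A \cup B$ is Noetherian.

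I do not anticipate any real obstacle: both statements are immediate consequences of the subsequence characterisation combined with pigeonhole. The only small subtlety is making sure that the composition of two strictly increasing maps $\varphi \circ \psi\colon \mathbb{N} \longrightarrow \mathbb{N}$ is strictly increasing, so that $u \circ \varphi \circ \psi$ legitimately counts as a subsequence of $u$ in the sense of the earlier \textbf{Conventions and notations}.
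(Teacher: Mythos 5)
Your proof is correct, but it takes a different route from the paper. You reduce both claims to the sequence characterisation of Noetherianity (\Cref{prop-Noetherian-subsequence}): for the subset claim you restrict a sequence, and for the union claim you use pigeonhole to extract an infinite subsequence landing in $A$ or in $B$, pass to an increasing subsequence there, and note that it remains an increasing subsequence of the original sequence (your remark that a composition of strictly increasing maps $\mathbb{N}\longrightarrow\mathbb{N}$ is strictly increasing is exactly the point needed to stay within the paper's definition of subsequence). The paper instead argues directly from \Cref{def-Noetherian}: a decreasing chain or antichain in $C$ is one in $A$, and a decreasing chain (respectively antichain) in $A\cup B$ is analysed via its intersections with $A$ and $B$. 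The trade-off is that the paper's argument is purely definitional and self-contained, whereas yours leans on \Cref{prop-Noetherian-subsequence}, a Ramsey-type result that the paper quotes from van der Hoeven without proof; on the other hand, your argument for the union is arguably smoother, since the paper's step ``$X\cap A$ or $X\cap B$ would be a decreasing chain'' glosses over the fact that an infinite subset of a chain with no minimal element need not itself lack a minimal element, while your pigeonhole-plus-subsequence argument sidesteps any such issue. Both approaches are legitimate.
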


\begin{proof}
  If $C \subseteq A$ are subsets of $\Omega$, then a decreasing chain (respectively
  an antichain) in $C$ is a decreasing chain (respectively an antichain) in $A$. So
  $C$ is Noetherian if $A$ is Noetherian. If $X$ were an infinite decreasing chain in $A \cup B$,
  then $X \cap A$ or $X \cap B$ would be a decreasing chain in $A$ or
  $B$ respectively , which cannot be. If $Y$ is an antichain in $A \cup
  B$, then $Y \cap A$ and $Y \cap B$ are antichains in $A$ and $B$
  respectively, so $Y$ must be finite. So $A \cup B$ is Noetherian.
\end{proof}

    \begin{definition}\label{def:Noetheriansummability}
        Given a non-empty partially ordered set $(\Omega,<)$, we let
    \[\mathbf{H}_{\Omega} k \assign \{{v}\colon \Omega\longrightarrow k\mid \mathrm{supp}\,\mathbf{v}\text{ is Noetherian}\}.\]\label{autolab11}For a set $I$, we define a map $\Sigma^{\Omega}_I$ as follows. The domain $\tmop{dom} \Sigma^{\Omega}_I$ of $\Sigma^{\Omega}_I$ is the set of all $\mathbf{f} \colon I
    \longrightarrow \mathbf{H}_{\Omega} k$ with
    \begin{itemizedot}
        \item $\bigcup_{i \in I} \tmop{supp} \mathbf{f} (i)$ is a Noetherian subset of $\Omega$, and
  
        \item for any $p \in \Omega$, the set $\{i \in I \ \colon \ p \in \mathrm{supp}\,\mathbf{f}(i)\}$ is finite.
    \end{itemizedot}
    For any $\mathbf{f}\in \tmop{dom} \Sigma^{\Omega}_I$,  we define $ \tmop{supp} \Sigma_I^{\Omega}  \mathbf{f}\in \mathbf{H}_{\Omega} k$, whose support satisfies \begin{equation}\label{eq:suppunion}
       \tmop{supp} \Sigma_I^{\Omega} \mathbf{f} \subseteq \bigcup \limits_{i \in I} \tmop{supp} \mathbf{f}(i),
    \end{equation} by
    $$\Sigma_I^{\Omega} \mathbf{f}\colon \Omega\longrightarrow k, p \mapsto \sum_{i \in I} \mathbf{f}(i)
    (p).$$

We say that $(\mathbf{H}_{\Omega} k,\Sigma^{\Omega})$ is the \textbf{space of Noetherian series} (with exponents in ${\Omega}$). 
    \end{definition}

     In the particular case when $(\Omega,<)$ itself is Noetherian, we have
    $\mathbf{H}_{\Omega} k = k^{\Omega}$. In the particular case when $\Omega$ is linearly ordered $\mathbf{H}_{\Omega} k$ coincides with the Hahn product space of \Cref{ex-Hahn-space}.

    \begin{proposition}
    \label{prop-gloub-spaces}Let $(\Omega,<)$ be a non-empty partially ordered set. Then $(\mathbf{H}_{\Omega} k,\Sigma^{\Omega})$ is an ultrafinite summability space.
    \end{proposition}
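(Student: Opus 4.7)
The plan is to verify axioms \hyperref[ss1]{(\textbf{SS1})} through \hyperref[ss4]{(\textbf{SS4})} and \hyperref[uss]{(\textbf{UF})} directly from the definition of $\Sigma^{\Omega}$, relying throughout on \Cref{prop-Noetherian-gloub} (that finite unions and subsets of Noetherian subsets of $\Omega$ remain Noetherian) to control the support conditions. The pointwise formula for $\Sigma_I^{\Omega} \mathbf{f}$ means that all identities reduce, at each $p \in \Omega$, to the (already finite) sum $\sum_{i \in I_{\mathbf{f},p}} \mathbf{f}(i)(p)$; so after checking that the summability conditions are preserved, every equality becomes a trivial rearrangement of finite sums in $k$.

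For \hyperref[ss1]{(\textbf{SS1})}, a family $\mathbf{f} \in (\mathbf{H}_{\Omega} k)^{(I)}$ has finite support in $I$, so $\bigcup_{i \in I} \tmop{supp}\mathbf{f}(i)$ is a finite union of Noetherian sets, hence Noetherian; and $\{i : p \in \tmop{supp}\mathbf{f}(i)\}$ is finite because it sits inside the finite set $\tmop{supp}\mathbf{f}$. The value $\Sigma_I^{\Omega}\mathbf{f}(p) = \sum_{i \in \tmop{supp}\mathbf{f}} \mathbf{f}(i)(p)$ matches the axiom. For \hyperref[ss2]{(\textbf{SS2})}, a bijective reindexing $\varphi\colon I \to J$ leaves both the union of supports and the pointwise fibres unchanged, and equality of sums is immediate. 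For \hyperref[ss4]{(\textbf{SS4})}, given $(\mathbf{v},\mathbf{w}) \in \dom \Sigma^{\Omega}_{I_1} \times \dom \Sigma^{\Omega}_{I_2}$ with $I = I_1 \sqcup I_2$, the union of supports of $\mathbf{v} \sqcup \mathbf{w}$ is $\bigcup_{i_1} \tmop{supp}\mathbf{v}(i_1) \cup \bigcup_{i_2} \tmop{supp}\mathbf{w}(i_2)$, Noetherian by \Cref{prop-Noetherian-gloub}, and the pointwise fibres are finite unions of finite sets.

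The only axiom requiring a little care is \hyperref[ss3]{(\textbf{SS3})}. Given $I = \bigsqcup_{j \in J} I_j$ and $\mathbf{v} \in \dom \Sigma^{\Omega}_I$, each restriction $\mathbf{v}_j$ inherits both conditions from $\mathbf{v}$ (union of supports is a subset of a Noetherian set, fibres are subsets of finite fibres), giving (\textbf{SS3a}). For (\textbf{SS3b}), using \eqref{eq:suppunion} we get $\tmop{supp}\Sigma^{\Omega}_{I_j}\mathbf{v}_j \subseteq \bigcup_{i \in I_j} \tmop{supp}\mathbf{v}(i)$, whence $\bigcup_{j \in J} \tmop{supp}\Sigma^{\Omega}_{I_j}\mathbf{v}_j \subseteq \bigcup_{i \in I}\tmop{supp}\mathbf{v}(i)$ is Noetherian; and for $p \in \Omega$, any $j$ with $p \in \tmop{supp}\Sigma^{\Omega}_{I_j}\mathbf{v}_j$ admits some $i \in I_j$ with $p \in \tmop{supp}\mathbf{v}(i)$, so the set of such $j$ is bounded by (the image under the map $i \mapsto j(i)$ of) the finite set $\{i \in I : p \in \tmop{supp}\mathbf{v}(i)\}$, hence finite. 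Finally (\textbf{SS3c}) is the identity, at each $p$, $\sum_{i \in I} \mathbf{v}(i)(p) = \sum_{j \in J} \sum_{i \in I_j} \mathbf{v}(i)(p)$, a rearrangement of a finite sum in $k$.

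For ultrafiniteness \hyperref[uss]{(\textbf{UF})}, take $\mathbf{v} \in \dom\Sigma^{\Omega}_I$ and maps $f_i\colon X_i \to k$ with each $X_i$ finite, and set $f\mathbf{v} = (f_i(x)\mathbf{v}(i))_{(i,x) \in If}$. Since $\tmop{supp}(f_i(x)\mathbf{v}(i)) \subseteq \tmop{supp}\mathbf{v}(i)$, the union of supports of $f\mathbf{v}$ stays inside the Noetherian set $\bigcup_{i \in I}\tmop{supp}\mathbf{v}(i)$. For each $p$, the fibre $\{(i,x) \in If : p \in \tmop{supp}(f_i(x)\mathbf{v}(i))\}$ is contained in $\bigsqcup_{i : p \in \tmop{supp}\mathbf{v}(i)} \{i\} \times X_i$, a finite union of finite sets. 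Hence $f\mathbf{v} \in \dom\Sigma^{\Omega}_{If}$, as required. The main (and only) place where one could slip is in (\textbf{SS3b}) when bounding the pointwise fibres of the summed family, but the observation above that each witnessing $j$ comes from a witnessing $i$ resolves this cleanly.
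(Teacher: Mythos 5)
Your verification of \hyperref[ss1]{(\textbf{SS1})}--\hyperref[ss4]{(\textbf{SS4})} and \hyperref[uss]{(\textbf{UF})} is correct and follows essentially the same route as the paper: control the union-of-supports condition via \Cref{prop-Noetherian-gloub} and the pointwise fibres via containment in finite sets, after which every equality of sums reduces to a finite rearrangement in $k$. Your treatment of (\textbf{SS3b}) --- observing that each witnessing $j$ arises as the image of a witnessing $i$ under the map $i \mapsto j(i)$ --- is if anything slightly more carefully phrased than the paper's.

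One omission worth flagging: \Cref{def:summabilityspace} requires, \emph{before} the axioms are even stated, that each $\tmop{dom}\Sigma_I$ be a $k$-subspace of $V^I$ and that each $\Sigma_I$ be a $k$-linear map into $V$. The paper spends the first portion of its proof checking exactly this: that $\mathbf{f} + c\mathbf{g} \in \tmop{dom}\Sigma^{\Omega}_I$ whenever $\mathbf{f},\mathbf{g} \in \tmop{dom}\Sigma^{\Omega}_I$ and $c \in k$ (via $\tmop{supp}(\mathbf{f}(i)+c\mathbf{g}(i)) \subseteq \tmop{supp}\mathbf{f}(i) \cup \tmop{supp}\mathbf{g}(i)$ and finiteness of the union of the two fibres), that $\Sigma^{\Omega}_I(\mathbf{f}+c\mathbf{g}) = \Sigma^{\Omega}_I\mathbf{f} + c\,\Sigma^{\Omega}_I\mathbf{g}$, and that $\Sigma^{\Omega}_I\mathbf{f}$ actually lands in $\mathbf{H}_{\Omega}k$ (its support is Noetherian by \eqref{eq:suppunion} and \Cref{prop-Noetherian-gloub}). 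These checks are of exactly the same flavour as your (\textbf{SS4}) argument and pose no difficulty, but they are part of what must be proved and should be added.
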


    \begin{proof}
        Throughout the proof, we fix sets $I, J$ and a generic element $p \in
        \Omega$. We will repeatedly appeal to \Cref{prop-Noetherian-gloub} without explicitly mentioning it. For instance, \Cref{prop-Noetherian-gloub} implies that $\mathbf{H}_{\Omega} k$ is a subspace of $k^{\Omega}$, and that $\Sigma_I^{\Omega}$ ranges in $\mathbf{H}_{\Omega} k$. 
        
        Let $\mathbf{f}, \mathbf{g} \in \tmop{dom} \Sigma^{\Omega}_I$ and let $c \in k$. For all $i \in I$, we
        have
        \[ \tmop{supp} (\mathbf{f} (i) + c\mathbf{g} (i)) \subseteq (\tmop{supp} \mathbf{f} (i)) \cup
        (\tmop{supp} \mathbf{g} (i)) \]
        so $\bigcup_{i \in I} \tmop{supp} ((\mathbf{f} + c\mathbf{g}) (i)) \subseteq \left( \bigcup_{i
        \in I} \tmop{supp} \mathbf{f} (i) \right) \cup \left( \bigcup_{i \in I} \tmop{supp} \mathbf{g}
        (i) \right)$. 

        Since $\{i  \in I \ \colon \ p \in \tmop{supp} \mathbf{f}(i)\}$ and $\{i  \in I \ \colon \ p \in \tmop{supp} \mathbf{g}(i)\}$ are finite, so is $\{i  \in I \ \colon \ p \in \tmop{supp} \mathbf{f}(i)+c \mathbf{g}(i)\}$. This shows that $\mathbf{f} +
        c\mathbf{g}$ lies in $\tmop{dom} \Sigma^{\Omega}_I$. Moreover
        \[ \sum_I (\mathbf{f} (i) (p) + c\mathbf{g} (i) (p))_{i \in I} = \sum_I (\mathbf{f} (i) (p))_{i \in I}
        + c \sum_I (\mathbf{g} (i) (p))_{i \in I}, \]
        so $\sum_{i \in I} (\mathbf{f} + c\mathbf{g}) = \sum_{i \in I} \mathbf{f} + c \sum_{i \in I} \mathbf{g}$.
  
        If $\mathbf{f}$ has finite support, then $\bigcup_{i \in I} \tmop{supp} \mathbf{f} (i)$ is a
        finite union of Noetherian subsets, so it is Noetherian.
        The family $F = (\mathbf{f} (i) (p))_{i \in I}$ has finite support, so $\mathbf{f} \in \tmop{dom} \Sigma^{\Omega}_I$, with $\sum_{i \in I} \mathbf{f} = \sum_{i \in \tmop{supp} F}
        \mathbf{f} (i)$. So \hyperref[ss1]{\textbf{SS1}} holds.
  
        Let $\varphi \colon J \longrightarrow I$ be a bijection. Then we have \[\bigcup_{j
        \in J} \tmop{supp} \mathbf{f} (\varphi (j)) = \bigcup_{i \in I} \tmop{supp} \mathbf{f} (i)\]
        which is Noetherian. The set $\{j \in J \ \colon \ p \in \tmop{supp} \mathbf{f}(\varphi(j))\} = \varphi^{-1}(\{i \in I \ \colon \ p \in \tmop{supp} \mathbf{f}(i)\})$ is finite since $\varphi$ is injective. Thus $(\mathbf{f} (\varphi (j)) (p))_{j \in J} \in \tmop{dom} \Sigma^{\Omega}_J$, and we have
        \[ \left( \sum_{j \in J} \mathbf{f} (\varphi (j)) \right) (p) = \sum_{j \in J} \mathbf{f}
        (\varphi (j)) (p) = \sum_{i \in I} \mathbf{f} (i) (p) = \left( \sum_{i \in I} \mathbf{f}
        (i) \right) (p) . \]
        So $\Sigma^{\Omega}_J \mathbf{f} \circ \varphi = \Sigma^{\Omega}_I \mathbf{f}$, whence \hyperref[ss2]{\textbf{SS2}} holds.
  
        Now assume that $I = \bigsqcup_{j \in J} I_j$. For $j \in J$, we have
        $\bigcup_{i \in I_j} \tmop{supp} \mathbf{f} (i) \subseteq \bigcup_{i \in I}
        \tmop{supp} \mathbf{f} (i)$ so $\bigcup_{i \in I_j} \tmop{supp} \mathbf{f} (i)$ is Noetherian. The set $\{i \in I_j \ \colon \ p \tmop{supp} \mathbf{f}(i)\} \subseteq \{i \in I \ \colon \ p \tmop{supp} \mathbf{f}(i)\}$ is finite. So $(\mathbf{f}
        (i))_{i \in I_j} \in \tmop{dom} \Sigma^{\Omega}_{I_j}$. We set $\mathbf{f}_j \assign \sum_{i \in
        I_j} \mathbf{f} (i)$.
  
        We have
        \[ \bigcup_{j \in J} \tmop{supp} \mathbf{f}_j \subseteq \bigcup_{j \in J} \left(
        \bigcup_{i \in I_j} \tmop{supp} \mathbf{f} (i) \right) \subseteq \bigcup_{i \in I}
        \tmop{supp} \mathbf{f} (i), \]
        so $\bigcup_{j \in J} \tmop{supp} \mathbf{f}_l$ is Noetherian. The set $\{j \in J \ \colon \ p \in \tmop{supp} \mathbf{f}_j\} \subset \{i \in I \ \colon \ p \in \tmop{supp} \mathbf{f}(i)\}$ is finite. So $(\mathbf{f}_j)_{j \in J} \in \tmop{dom} \Sigma^{\Omega}_J$. Moreover, $\sum \limits_{j \in J} \mathbf{f}_j(p) = \sum \limits_{j \in J} \sum \limits_{i \in I_j} \mathbf{f}(i)(p)$. So \hyperref[ss3]{\textbf{SS3}} holds.
  
        Let $I_1, I_2$ be sets with $I = I_1 \sqcup I_2$, and let $\mathbf{f}_1 \in \tmop{dom} \Sigma^{\Omega}_{I_1}$ and $\mathbf{f}_2 \in \tmop{dom} \Sigma^{\Omega}_{I_2}$. We have 

        \[ \bigcup_{i
        \in I} \tmop{supp} (\mathbf{f}_1 \sqcup \mathbf{f}_2) (i) = (\bigcup_{i \in I_1} \tmop{supp}
        \mathbf{f}_1 (i)) \cup (\bigcup_{i \in I_2} \tmop{supp} \mathbf{f}_2 (i)), \] 

        which is Noetherian. The set $\{ i \in I \ \colon \ p \in \tmop{supp} (\mathbf{f}_1 \sqcup \mathbf{f}_2)(i)\}$ is contained in $\{ i \in I \ \colon \ p \in \tmop{supp} \mathbf{f}_1(i)\} \cup \{ i \in I \ \colon \ p \in \tmop{supp} \mathbf{f}_2(i)\}$, so it is finite. Therefore $\mathbf{f}_1
        \sqcup \mathbf{f}_2 \in \tmop{dom} \Sigma^{\Omega}_I$. So \hyperref[ss4]{\textbf{SS4}} holds.
  
        Let $(h_i)_{i \in I}$ be a
        family indexed by $I$ of $k$-valued maps $h_i$ with finite domains
        $X_i$. We have 

        \[ \bigcup_{i \in I} \bigcup_{x \in X_i} \tmop{supp} h_i (x) \mathbf{f}
        (i) \subseteq \bigcup_{i \in I} \tmop{supp} \mathbf{f} (i),\] 
        which is Noetherian. The set $\{(i,x) \ \colon \ i \in I,x \in X_i,p\in \tmop{supp} h_i(x)\mathbf{f}(i)(p)\}$ is contained in $\bigcup \limits_{i \in Y} \{i\} \times X_i$ where $Y$ is the finite set $Y\assign \{i \in I \ \colon \ p \in \tmop{supp} \mathbf{f}(i)(p)\}$. Thus this set is finite, i.e.  $(h_i (x) \mathbf{f} (i))_{i \in I \wedge x \in
        X_i} \in \tmop{dom} \Sigma^{\Omega}_{I \mathbf{f}}$. So \hyperref[uss]{\textbf{UF}} holds.
    \end{proof}

We will always consider $\mathbf{H}_{\Omega} k$ as endowed with the summability structure of \Cref{def:Noetheriansummability}.

    \begin{example} In the case when $(\Omega,<)$ is linearly ordered, the space $\mathbf{H}_{\Omega} k$ is the the Hahn product of \Cref{ex-Hahn-space}.\end{example}

\subsection{Strongly linear maps}\label{subsection-strongly-linear-maps}

   For the sequel of \Cref{subsection-strongly-linear-maps}, we fix two summability spaces $(V, \Sigma)$ and $(W,\Sigma')$. 

    \begin{definition}
        A map $\phi\colon V \longrightarrow W$ is called
        {\bf{{\tmstrong{strongly linear}}}} (with respect to $\Sigma$ and $\Sigma'$) if
        it is linear, and if, for all sets $I$ and families $\mathbf{v} \in \tmop{dom}
        \Sigma_I$, the family $\phi \circ \mathbf{v}$ lies in $\tmop{dom} \Sigma'_I$
        and
        \[ \Sigma'_I  (\phi \circ \mathbf{v}) = \phi (\Sigma_I \mathbf{v}) . \]
        We write $\tmop{Lin}^+ (V, W)$\label{autolab12} for the set of strongly linear
        maps $V \longrightarrow W$, and $\tmop{Lin}^+ (V) \assign
        \tmop{Lin}^+ (V, V)$\label{autolab13} for the set of strongly linear endomorphisms on $V$.
    \end{definition}

    \begin{example}
        If $\Sigma$ is the minimal summability structure on $V$, then we have
        $\tmop{Lin}^+ (V, W) = \tmop{Lin} (V, W)$, i.e. every linear map is strongly linear with respect to the minimal summability structure on $V$.
    \end{example}

    \begin{example}
        Consider the Hahn space $V=\mathbf{H}_{\gamma \in \Gamma} k$ of \Cref{ex-Hahn-space}. For all order-preserving maps $u\colon\Gamma \longrightarrow \Gamma$, the map $V \longrightarrow V\ \colon \ f \mapsto f \circ u$ is strongly linear (see \cite[Proposition 2.2.4]{Serra:phd} for a proof).
    \end{example}

    Linear maps between summability spaces need not be strongly linear, but counterexamples are typically produced using non-constructive methods (see \cite[Example 4.0.8]{KuhlSer:vaut}).

    \begin{example}
        If $V=\mathbf{H}_{\gamma \in \Gamma} k$ is the Hahn space of \Cref{ex-Hahn-space}, then given a strongly linear map $\mu\colon V\longrightarrow V$ and $a \in V$, we have \[\mu(a) = \mu(\sum \limits_{\gamma \in \Gamma} a(\gamma) \mathbbm{1}_{\gamma})=\sum \limits_{\gamma \in \Gamma} a(\gamma) \mu(\mathbbm{1}_{\gamma}).\]
        In particular, the map $\mu$ is determined by the family $(\mu(\mathbbm{1}_{\gamma}))_{\gamma \in \Gamma}.$
    \end{example}

    We now investigate the algebraic structure of spaces
of strongly linear maps between two summability spaces.

    \begin{proposition}
        \label{prop-Lin+-subspace}$\tmop{Lin}^+ (V, W)$ is a subspace of
        $\tmop{Lin} (V, W)$.
    \end{proposition}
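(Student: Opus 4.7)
The plan is to verify directly from the definitions that $\tmop{Lin}^+(V,W)$ is non-empty and closed under addition and scalar multiplication. Since $\tmop{Lin}^+(V,W) \subseteq \tmop{Lin}(V,W)$ by definition, it suffices to check these closure conditions within $\tmop{Lin}(V,W)$. The essential ingredient, which I will invoke repeatedly, is the fact that each $\Sigma'_I$ is by assumption a $k$-linear map whose domain $\tmop{dom}\Sigma'_I$ is a $k$-subspace of $W^I$.

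First I would note that the zero map $0\colon V \longrightarrow W$ is strongly linear: for any set $I$ and any $\mathbf{v} \in \tmop{dom}\Sigma_I$, the composed family $0 \circ \mathbf{v}$ has empty support, hence lies in $\tmop{dom}\Sigma'_I$ by \hyperref[ss1]{\textbf{(SS1)}} with $\Sigma'_I(0 \circ \mathbf{v}) = 0 = 0(\Sigma_I \mathbf{v})$. This shows in particular that $\tmop{Lin}^+(V,W)$ is non-empty.

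Next, for $\phi,\psi \in \tmop{Lin}^+(V,W)$ and $c \in k$, I would fix an arbitrary set $I$ and family $\mathbf{v} \in \tmop{dom}\Sigma_I$. By strong linearity of $\phi$ and $\psi$, both $\phi \circ \mathbf{v}$ and $\psi \circ \mathbf{v}$ lie in $\tmop{dom}\Sigma'_I$. Since $\tmop{dom}\Sigma'_I$ is a subspace of $W^I$, the family $(\phi + c\psi)\circ \mathbf{v} = \phi \circ \mathbf{v} + c(\psi \circ \mathbf{v})$ also lies in $\tmop{dom}\Sigma'_I$. Applying the $k$-linearity of $\Sigma'_I$ together with the strong linearity hypotheses then yields
\[
\Sigma'_I\bigl((\phi + c\psi)\circ \mathbf{v}\bigr) = \Sigma'_I(\phi \circ \mathbf{v}) + c\,\Sigma'_I(\psi \circ \mathbf{v}) = \phi(\Sigma_I \mathbf{v}) + c\,\psi(\Sigma_I \mathbf{v}) = (\phi + c\psi)(\Sigma_I \mathbf{v}),
\]
which establishes that $\phi + c\psi \in \tmop{Lin}^+(V,W)$.

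There is no real obstacle here: the proof is a routine verification, and the only properties used are the definition of strong linearity and the axiomatic fact, built into Definition~\ref{def:summabilityspace}, that each $\Sigma'_I$ is a $k$-linear map on a $k$-subspace.
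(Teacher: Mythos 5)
Your proof is correct and follows essentially the same route as the paper: closure under $\phi + c\psi$ is deduced from the fact that $\tmop{dom}\Sigma'_I$ is a subspace of $W^I$ and $\Sigma'_I$ is $k$-linear. The only addition is your explicit check that the zero map is strongly linear, which the paper leaves implicit.
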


    \begin{proof}
        Let $\phi, \psi \in \tmop{Lin}^+ (V, W)$, $c \in k$, let $I$ be a set
        and let $\mathbf{v} \in \tmop{dom} \Sigma_I$. Since $\tmop{dom} \Sigma'_I$
        is a vector subspace of $W^I$ and $\Sigma'_I$ is a linear map, the
        family $\phi \circ \mathbf{v} + c \psi \circ \mathbf{v}$ is summable with sum
        \[ \Sigma'_I  (\phi \circ \mathbf{v}) + c \Sigma'_I  (\psi \circ \mathbf{v}) = \phi
        (\Sigma_I \mathbf{v}) + c \psi (\Sigma_I \mathbf{v}) = (\phi + c \psi)
        (\Sigma_I \mathbf{v}) . \]
        This shows that $\phi + c \psi \in \tmop{Lin}^+ (V, W)$.
    \end{proof}

    \begin{proposition}
    \label{prop-lin-subalgebra}The composition of two strongly linear maps $V\rightarrow V$ is again strongly linear. Therefore $(\tmop{Lin}^+ (V),+,\circ)$ is a subalgebra of
        $(\tmop{Lin} (V), +, \circ)$.
    \end{proposition}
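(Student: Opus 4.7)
The proof is a straightforward two-step verification, so the plan is essentially to apply the defining property of strong linearity twice in succession and then invoke the previous proposition.

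First, I would take $\phi, \psi \in \tmop{Lin}^+(V)$, an arbitrary set $I$, and a summable family $\mathbf{v} \in \dom \Sigma_I$. Applying strong linearity of $\psi$ to $\mathbf{v}$ yields $\psi \circ \mathbf{v} \in \dom \Sigma_I$ together with $\Sigma_I(\psi \circ \mathbf{v}) = \psi(\Sigma_I \mathbf{v})$. Now the family $\psi \circ \mathbf{v}$ is itself summable, so I can apply strong linearity of $\phi$ to it, obtaining $\phi \circ (\psi \circ \mathbf{v}) = (\phi \circ \psi) \circ \mathbf{v} \in \dom \Sigma_I$ and
\[
\Sigma_I\bigl((\phi \circ \psi) \circ \mathbf{v}\bigr) = \phi\bigl(\Sigma_I(\psi \circ \mathbf{v})\bigr) = \phi\bigl(\psi(\Sigma_I \mathbf{v})\bigr) = (\phi \circ \psi)(\Sigma_I \mathbf{v}).
\]
This is exactly the statement that $\phi \circ \psi \in \tmop{Lin}^+(V)$.

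For the algebra statement, I already know from \Cref{prop-Lin+-subspace} that $\tmop{Lin}^+(V)$ is a $k$-linear subspace of $\tmop{Lin}(V)$, and the first part shows it is closed under composition. Since $(\tmop{Lin}(V), +, \circ)$ is itself an associative $k$-algebra and $\tmop{Lin}^+(V)$ inherits associativity, this suffices to conclude that $(\tmop{Lin}^+(V), +, \circ)$ is a subalgebra. No obstacle is expected: everything reduces to unfolding the definition of strong linearity, and there is no subtlety in the iteration since the intermediate family $\psi \circ \mathbf{v}$ is automatically summable by the first application.
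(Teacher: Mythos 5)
Your proposal is correct and follows essentially the same argument as the paper: apply strong linearity of $\psi$ to the summable family $\mathbf{v}$, then strong linearity of $\phi$ to the resulting summable family $\psi\circ\mathbf{v}$, and combine with \Cref{prop-Lin+-subspace} for the subspace structure. No gaps.
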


    \begin{proof}
        Let $\phi, \psi \in \tmop{Lin}^+ (V)$, let $I$ be a set and let $\mathbf{v} \in
        \tmop{dom} \Sigma_I$. Then $\psi \circ \mathbf{v}$ is summable with sum
        $\sum_I \psi \circ \mathbf{v} = \psi \left( \sum_I \mathbf{v} \right)$ by strong
        linearity of $\psi$. So $(\phi \circ (\psi \circ \mathbf{v}))_{i \in I}$ is summable
        by strong linearity of $\phi$, with sum $\sum_I \phi \circ (\psi \circ
        \mathbf{v}) = \phi \left( \psi \left( \sum_I \mathbf{v} \right) \right)$. Since $\phi
        \circ \psi$ is linear, this shows that $\phi \circ \psi \in \tmop{Lin}^+
        (V)$.
    \end{proof}

    \begin{corollary}
        $(\tmop{Lin}^+ (V),+,\circ)$ is a Lie subalgebra of $\left( \tmop{Lin} (V), +,
        \left[ \hspace{1.0em} \right], . \right)$.
    \end{corollary}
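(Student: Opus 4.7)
The corollary is essentially an immediate consequence of the two preceding propositions, once one notes that the bracket on $\tmop{Lin}(V)$ is defined in terms of operations that have already been controlled: addition (and scalar multiplication by $-1$) and composition. So the plan is very short.

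First, I would observe that the statement of the corollary contains a minor abuse of notation: a Lie subalgebra is taken with respect to the bracket, not composition; the content is that $\tmop{Lin}^+(V)$, equipped with the restriction of $[\,\cdot\,,\,\cdot\,]$ from $\tmop{Lin}(V)$, is a Lie subalgebra of $(\tmop{Lin}(V),+,[\,\cdot\,,\,\cdot\,])$. To prove this I only need two things: that $\tmop{Lin}^+(V)$ is a $k$-linear subspace of $\tmop{Lin}(V)$, and that it is closed under the bracket.

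The first is exactly \Cref{prop-Lin+-subspace}. For the second, take $\phi,\psi \in \tmop{Lin}^+(V)$. By \Cref{prop-lin-subalgebra}, both $\phi \circ \psi$ and $\psi \circ \phi$ lie in $\tmop{Lin}^+(V)$. By \Cref{prop-Lin+-subspace}, their difference
\[
[\phi,\psi] \assign \phi \circ \psi - \psi \circ \phi
\]
again lies in $\tmop{Lin}^+(V)$. Since the Jacobi identity and antisymmetry of $[\,\cdot\,,\,\cdot\,]$ on $\tmop{Lin}^+(V)$ are inherited from $\tmop{Lin}(V)$, this is enough to conclude that $\tmop{Lin}^+(V)$ is a Lie subalgebra of $\tmop{Lin}(V)$.

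There is no real obstacle here: the work has been done in \Cref{prop-Lin+-subspace} and \Cref{prop-lin-subalgebra}, and the corollary simply packages their combination in Lie-algebraic language.
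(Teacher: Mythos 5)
Your argument is correct and matches the paper's (implicit) reasoning: the corollary is stated without proof precisely because, as you note, closure under the bracket follows immediately from \Cref{prop-Lin+-subspace} (subspace) and \Cref{prop-lin-subalgebra} (closure under composition), with antisymmetry and the Jacobi identity inherited from $\tmop{Lin}(V)$. Your remark about the slight abuse of notation in the statement is also apt.
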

    
        We next equip $\tmop{Lin}^+ (V, W)$ with a summability structure  (see \Cref{prop-multi-strong} below). 

        \begin{definition}
        Let $J$ be a set, and let $\boldsymbol{\phi}\colon J \longrightarrow \tmop{Lin}^+ (V,
        W)$ be a map. 

\begin{enumerate}[(i)]

\item We say that $\boldsymbol{\phi}$ is {$\tmop{Lin}$\textbf{-summable}}{\index{$\tmop{Lin}$-summable family}} if for all sets $I$ and all $\mathbf{v} \in \tmop{dom}
\Sigma_I$, the family $\boldsymbol{\phi} (\mathbf{v}) \assign (\boldsymbol{\phi} (j) (\mathbf{v} (i)))_{(i, j) \in I
\times J}$ lies in $\tmop{dom}
\Sigma'_{I \times J}$.

\item If $\boldsymbol{\phi}$ is $\tmop{Lin}$-summable, then we define a map $\sum_{j
\in J} \boldsymbol{\phi} (j) \colon V \longrightarrow W$ as follows. For $v \in V$, define
\[ \left( \sum_{j \in J} \boldsymbol{\phi} (j) \right) (v) \assign \Sigma'_J (\boldsymbol{\phi}(j)(v))_{j \in J}. \]

   \end{enumerate}

   \end{definition}

    \begin{lemma}
        \label{lem-end-sum-stronglin}If $J$ is a set and $\boldsymbol{\phi} \colon J \longrightarrow
        \tmop{Lin}^+ (V, W)$ is $\tmop{Lin}$-summable, then its sum $\psi \assign
        \sum_{j \in J} \boldsymbol{\phi} (j)$ is strongly linear.
    \end{lemma}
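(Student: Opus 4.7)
The plan is to exploit the double summability provided by $\tmop{Lin}$-summability of $\boldsymbol{\phi}$, and to apply axiom \hyperref[ss3]{\textbf{(SS3)}} to the product index set $I \times J$ in two different ways: once partitioning into rows $\{i\} \times J$ and once into columns $I \times \{j\}$. Each partitioning produces, via axiom \hyperref[ss3]{\textbf{(SS3c)}}, an expression for the same double sum $\Sigma'_{I \times J}(\boldsymbol{\phi}(j)(\mathbf{v}(i)))_{(i,j)}$, and comparing these two expressions will yield both the linearity and the strong linearity of $\psi$.

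First I would establish linearity of $\psi$. Given $v, w \in V$ and $c \in k$, apply $\tmop{Lin}$-summability to the finitely supported family $\mathbf{v} \in V^{\{0,1\}}$ defined by $\mathbf{v}(0) = v$ and $\mathbf{v}(1) = cw$ (which lies in $\dom \Sigma_{\{0,1\}}$ by \hyperref[ss1]{\textbf{(SS1)}}). The resulting family on $\{0,1\} \times J$ is summable in $\Sigma'$. Partitioning along the two rows via \hyperref[ss3]{\textbf{(SS3)}} and using linearity of $\Sigma'_J$ gives the sum $\psi(v) + c\psi(w)$; partitioning along the columns $\{(0,j),(1,j)\}$ and using linearity of each $\boldsymbol{\phi}(j)$ together with \hyperref[ss1]{\textbf{(SS1)}} yields $\Sigma'_J(\boldsymbol{\phi}(j)(v + cw))_{j \in J} = \psi(v + cw)$. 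Equality of the two gives $\psi(v + cw) = \psi(v) + c\psi(w)$.

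Next, for strong linearity, fix a set $I$ and $\mathbf{v} \in \dom \Sigma_I$. By $\tmop{Lin}$-summability, $\boldsymbol{\phi}(\mathbf{v}) = (\boldsymbol{\phi}(j)(\mathbf{v}(i)))_{(i,j) \in I \times J} \in \dom \Sigma'_{I \times J}$. Partition $I \times J = \bigsqcup_{i \in I}(\{i\} \times J)$: by \hyperref[ss3a]{\textbf{(SS3a)}} and \hyperref[ss3b]{\textbf{(SS3b)}}, each row gives a summable family with sum $\psi(\mathbf{v}(i))$, and then $(\psi(\mathbf{v}(i)))_{i \in I} = \psi \circ \mathbf{v}$ lies in $\dom \Sigma'_I$, with $\Sigma'_I(\psi \circ \mathbf{v})$ equal to the double sum by \hyperref[ss3c]{\textbf{(SS3c)}}. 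Partition instead as $I \times J = \bigsqcup_{j \in J}(I \times \{j\})$: for each $j$, strong linearity of $\boldsymbol{\phi}(j)$ applied to $\mathbf{v}$ yields summability of $(\boldsymbol{\phi}(j)(\mathbf{v}(i)))_{i \in I}$ with sum $\boldsymbol{\phi}(j)(\Sigma_I \mathbf{v})$, and then \hyperref[ss3c]{\textbf{(SS3c)}} shows the double sum equals $\Sigma'_J(\boldsymbol{\phi}(j)(\Sigma_I \mathbf{v}))_{j \in J} = \psi(\Sigma_I \mathbf{v})$. The two expressions for the double sum coincide, proving $\Sigma'_I(\psi \circ \mathbf{v}) = \psi(\Sigma_I \mathbf{v})$.

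There is no real obstacle here; the proof is a routine bookkeeping exercise with \hyperref[ss3]{\textbf{(SS3)}}. The only point requiring minor care is to ensure that after partitioning along rows (respectively columns) one recovers $\psi$ evaluated pointwise, which is precisely the content of the defining formula for $\psi$, and that strong linearity of each component $\boldsymbol{\phi}(j)$ is what allows the column decomposition to produce $\boldsymbol{\phi}(j)(\Sigma_I \mathbf{v})$ rather than merely a formal expression.
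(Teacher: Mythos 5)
Your proof is correct and follows essentially the same route as the paper's: both arguments sum the doubly-indexed family $(\boldsymbol{\phi}(j)(\mathbf{v}(i)))_{(i,j)\in I\times J}$ via \textbf{(SS3)} along rows and along columns, using strong linearity of each $\boldsymbol{\phi}(j)$ on the column side to identify the total with $\psi(\Sigma_I\mathbf{v})$ and the row side with $\Sigma'_I(\psi\circ\mathbf{v})$. The only cosmetic difference is in the linearity step, where the paper argues pointwise from linearity of each $\boldsymbol{\phi}(j)$ and of $\Sigma'_J$ rather than via a two-element index set, but the content is the same.
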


    \begin{proof}
        Let $u_0,v_0 \in V$ and $c \in k$. The linearity of each $\boldsymbol{\phi} (j)$ for $j \in J$ gives that $\boldsymbol{\phi} (u_0 + cv_0)
        = \boldsymbol{\phi} (u_0) + c \boldsymbol{\phi} (v_0)$. Now $\Sigma_{2, J}$ is linear, so $\phi (u_0
        + cv_0) = \psi (u_0) + c \psi (v_0)$, i.e.\ $\psi$ is linear. 
  
        We next prove that $\psi$ is strongly linear. Let $I$ be a set, let $\mathbf{v} \in
        \tmop{dom} \Sigma_I$ and set
        \[ v \assign \Sigma_I \mathbf{v} \in V . \]
        The family $\boldsymbol{\phi} (\mathbf{v})$ is summable. By \hyperref[ss3]{\textbf{SS3}}, both families
        $\left( \sum_{j \in J} \boldsymbol{\phi}(j) (\mathbf{v} (i)) \right)_{i \in I}$ and
        $\left( \sum_{i \in I} \boldsymbol{\phi}(j) (\mathbf{v} (i)) \right)_{j \in J}$ are summable with
        \[ \Sigma'_J (\Sigma'_I (\boldsymbol{\phi} (j) \circ \mathbf{v}))_{j \in J} = \Sigma'_I  (\Sigma'_J \boldsymbol{\phi} (\mathbf{v} (i)))_{i \in I} = \Sigma'_I  (\psi \circ
        \mathbf{v}) . \]
        By strong linearity of each $\boldsymbol{\phi}(j)$, we have $\Sigma'_I (\boldsymbol{\phi} (j) \circ
        \mathbf{v}) = \boldsymbol{\phi} (j) (v)$ for all $j \in J$, whence $\phi (v) = \Sigma'_I
        (\psi \circ \mathbf{v})$. This shows that $\psi$ is strongly linear. 
    \end{proof}

    For each set $I$, we define a map $\Sigma^{\tmop{Lin}}_I$ as follows. The domain of $\Sigma^{\tmop{Lin}}_I$ is the set of $\tmop{Lin}$-summable families $I \rightarrow \tmop{Lin}^+(V,W)$. For such a family $\boldsymbol{\phi}$, we define $\Sigma^{\tmop{Lin}}_I=\Sigma_{i \in I} \boldsymbol{\phi}(i) \in \tmop{Lin}^+ (V, W)$.
    \begin{proposition}
    \label{prop-multi-strong}
        The structure $(\tmop{Lin}^+ (V,W),\Sigma^{\tmop{Lin}})$ is a summability space. Moreover, if $(W, \Sigma')$ is ultrafinite, then $(\tmop{Lin}^+ (V, W),\Sigma^{\tmop{Lin}})$ is ultrafinite.
    \end{proposition}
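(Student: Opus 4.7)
The plan is to verify the four axioms \hyperref[ss1]{\textbf{(SS1)}}--\hyperref[ss4]{\textbf{(SS4)}} (plus \hyperref[uss]{\textbf{(UF)}} in the ultrafinite case) by transferring each axiom from $(W,\Sigma')$ through the definition: a family $\boldsymbol{\phi}\colon J\longrightarrow \tmop{Lin}^+(V,W)$ is $\tmop{Lin}$-summable precisely when, for \emph{every} summable $\mathbf{v}\in \tmop{dom}\Sigma_I$, the product family $\boldsymbol{\phi}(\mathbf{v})=(\boldsymbol{\phi}(j)(\mathbf{v}(i)))_{(i,j)\in I\times J}$ lies in $\tmop{dom}\Sigma'_{I\times J}$. \Cref{lem-end-sum-stronglin} already tells us that the resulting sum is a strongly linear map, so it remains to check that $\tmop{dom}\Sigma^{\tmop{Lin}}_J$ is a subspace of $\tmop{Lin}^+(V,W)^J$ and that $\Sigma^{\tmop{Lin}}_J$ is linear; this is immediate because $\tmop{dom}\Sigma'_{I\times J}$ is a subspace of $W^{I\times J}$ and $\Sigma'_{I\times J}$ is linear, and both properties can be checked pointwise on $v\in V$ by testing the singleton family $\mathbf{v}=v$.

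For \hyperref[ss1]{\textbf{(SS1)}}, given $\boldsymbol{\phi}\in\tmop{Lin}^+(V,W)^{(J)}$ and $\mathbf{v}\in\tmop{dom}\Sigma_I$, the family $\boldsymbol{\phi}(\mathbf{v})$ is supported on the finite union $I\times\tmop{supp}\boldsymbol{\phi}$; applying \hyperref[ss4]{\textbf{(SS4)}} for $\Sigma'$ finitely many times to the summable families $(\boldsymbol{\phi}(j)\circ\mathbf{v})_{i\in I}$ (summable by strong linearity of $\boldsymbol{\phi}(j)$) yields summability, and the resulting sum evaluated at $v=\Sigma_I\mathbf{v}$ recovers $\sum_{j\in\tmop{supp}\boldsymbol{\phi}}\boldsymbol{\phi}(j)(v)$. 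For \hyperref[ss2]{\textbf{(SS2)}}, a bijection $\varphi\colon J'\longrightarrow J$ induces a bijection $\tmop{id}_I\times\varphi\colon I\times J'\longrightarrow I\times J$, so applying \hyperref[ss2]{\textbf{(SS2)}} for $\Sigma'$ to $\boldsymbol{\phi}(\mathbf{v})$ transfers summability and equality of sums to $(\boldsymbol{\phi}\circ\varphi)(\mathbf{v})$. For \hyperref[ss4]{\textbf{(SS4)}}, a partition $J=J_1\sqcup J_2$ induces $I\times J=(I\times J_1)\sqcup(I\times J_2)$, and $(\boldsymbol{\phi}_1\sqcup\boldsymbol{\phi}_2)(\mathbf{v})$ is exactly the disjoint union of $\boldsymbol{\phi}_1(\mathbf{v})$ and $\boldsymbol{\phi}_2(\mathbf{v})$, so \hyperref[ss4]{\textbf{(SS4)}} for $\Sigma'$ concludes.

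The main obstacle, and the step requiring the most care, is \hyperref[ss3]{\textbf{(SS3)}}. Given a partition $J=\bigsqcup_{k\in K}J_k$ and a $\tmop{Lin}$-summable $\boldsymbol{\phi}\colon J\longrightarrow\tmop{Lin}^+(V,W)$, fix any summable $\mathbf{v}\in\tmop{dom}\Sigma_I$. The partition $I\times J=\bigsqcup_{k\in K}(I\times J_k)$ together with \hyperref[ss3]{\textbf{(SS3)}} for $\Sigma'$ applied to $\boldsymbol{\phi}(\mathbf{v})$ shows that each restriction $\boldsymbol{\phi}\!\!\upharpoonright_{J_k}(\mathbf{v})\in\tmop{dom}\Sigma'_{I\times J_k}$, hence each $\boldsymbol{\phi}\!\!\upharpoonright_{J_k}$ is $\tmop{Lin}$-summable; this gives \hyperref[ss3a]{\textbf{(SS3a)}}. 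To obtain \hyperref[ss3b]{\textbf{(SS3b)}}--\hyperref[ss3c]{\textbf{(SS3c)}}, set $\psi_k\assign\Sigma^{\tmop{Lin}}_{J_k}(\boldsymbol{\phi}\!\!\upharpoonright_{J_k})\in\tmop{Lin}^+(V,W)$. I must verify that $(\psi_k)_{k\in K}$ is $\tmop{Lin}$-summable, i.e.\ that $(\psi_k(\mathbf{v}(i)))_{(i,k)\in I\times K}$ lies in $\tmop{dom}\Sigma'_{I\times K}$. This will again follow from \hyperref[ss3]{\textbf{(SS3)}} for $\Sigma'$: partition $I\times J=\bigsqcup_{(i,k)\in I\times K}(\{i\}\times J_k)$ and observe that the $(i,k)$-block sum of $\boldsymbol{\phi}(\mathbf{v})$ is exactly $\psi_k(\mathbf{v}(i))$ by definition of $\psi_k$ and strong linearity. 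The same application yields the equality of sums $\Sigma^{\tmop{Lin}}_J\boldsymbol{\phi}=\Sigma^{\tmop{Lin}}_K(\psi_k)_{k\in K}$ after evaluating at an arbitrary $v\in V$ (taking $I$ to be a singleton and $\mathbf{v}=v$).

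Finally, for \hyperref[uss]{\textbf{(UF)}}, assume $(W,\Sigma')$ is ultrafinite, let $\boldsymbol{\phi}$ be $\tmop{Lin}$-summable and let $(f_j\colon X_j\longrightarrow k)_{j\in J}$ be a family of $k$-valued maps with finite domains. For any summable $\mathbf{v}\in\tmop{dom}\Sigma_I$, define $h_{(i,j)}\colon X_j\longrightarrow k$ by $h_{(i,j)}(x)\assign f_j(x)$; applying \hyperref[uss]{\textbf{(UF)}} for $\Sigma'$ to the summable family $\boldsymbol{\phi}(\mathbf{v})\in\tmop{dom}\Sigma'_{I\times J}$ with scalars $h$ yields summability of $(f_j(x)\boldsymbol{\phi}(j)(\mathbf{v}(i)))_{((i,j),x)}$, and an application of \hyperref[ss2]{\textbf{(SS2)}} via the obvious reindexing shows that $(f_j(x)\boldsymbol{\phi}(j))_{(j,x)\in Jf}$ is $\tmop{Lin}$-summable, completing the proof.
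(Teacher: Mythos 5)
Your proof is correct and follows essentially the same route as the paper: each axiom for $\Sigma^{\tmop{Lin}}$ is transferred from $(W,\Sigma')$ by applying the corresponding axiom to the product family $\boldsymbol{\phi}(\mathbf{v})$ over $I\times J$ via the obvious bijections and partitions, with \Cref{lem-end-sum-stronglin} guaranteeing that the sums are strongly linear. If anything, your treatment of \hyperref[ss3]{\textbf{(SS3b)}} via the finer partition $I\times J=\bigsqcup_{(i,k)}(\{i\}\times J_k)$ is slightly more careful than the paper's, which uses the coarser partition $\bigsqcup_{k}(I\times J_k)$ and leaves the passage from an $L$-indexed to an $(I\times L)$-indexed family implicit.
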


    \begin{proof}
        Let $J$ be a set, let $(\boldsymbol{\phi}(j))_{j \in J}$ and $(\boldsymbol{\psi}(j))_{j \in J}$ be
        $\tmop{Lin}$-summable, and let $c \in k$. We also fix once and for all a set $I$ and a summable family $\mathbf{v} \in \dom \Sigma_I$. 
  
        The family \[(\boldsymbol{\phi}(j) (\mathbf{v}
        (i)) + c \boldsymbol{\psi}(j) (\mathbf{v} (i)))_{(i, j) \in I \times J}\] is summable by \hyperref[ss2]{\textbf{SS2}}
        in $(W, \Sigma')$. So $\boldsymbol{\phi} + c \boldsymbol{\psi}$ is $\tmop{Lin}$-summable. The axiom \hyperref[ss1]{\textbf{SS1}} follows
        trivially from the validity of \hyperref[ss1]{\textbf{SS1}} in $(W, \Sigma')$, and likewise
        \hyperref[ss4]{\textbf{SS4}} follows from the validity of \hyperref[ss4]{\textbf{SS4}} in $(W, \Sigma')$.
  
        If $J_1, J_2$ are sets and $\psi \colon J_1 \longrightarrow J_2$ is a
        bijection, then for all $\tmop{Lin}$-summable $\boldsymbol{\phi}:J_2\longrightarrow \tmop{Lin}^+(V,W)$, the map
        \begin{eqnarray*}
            (\psi, \tmop{Id}) \colon J_1 \times I & \longrightarrow & J_2 \times I\\
            (j, i) & \longmapsto & (\psi(j), i)
        \end{eqnarray*}
        is bijective, so by \hyperref[ss2]{\textbf{SS2}} in $(W, \Sigma')$, the family
        $(\boldsymbol{\phi}(\psi (j)) (\mathbf{v} (i)))_{(j, i) \in J_1 \times I}$ is summable. Therefore
        $\boldsymbol{\phi} \circ \psi$ is $\tmop{Lin}$-summable. We easily see that\hyperref[ss2]{\textbf{SS2}} holds.
  
        Let us now show that \hyperref[ss3]{\textbf{SS3}} holds. Suppose that $J = \bigsqcup_{l \in L} J_l$ for a set $L$. For $l \in
        L$, we write $\boldsymbol{\phi}_l = \phi \upharpoonleft J_l$. Note that
        \[ I \times J = \bigsqcup_{l \in L} I \times J_l . \]
        It follows that and from \hyperref[ss3]{\textbf{SS3}} in $(W, \Sigma')$ that each family
        $\boldsymbol{\phi}_l (\mathbf{v})$ for $l \in L$ is summable in $(W, \Sigma')$. So
        each $\boldsymbol{\phi}_l$ is $\tmop{Lin}$-summable. Write $\boldsymbol{\sigma}(l) = \sum_{j \in J_l} \phi_j$
        for each $l \in L$. We claim that $\boldsymbol{\sigma} \assign (\boldsymbol{\sigma}(l))_{l \in L}$ is
        $\tmop{Lin}$-summable. Indeed, the family
        \[ \boldsymbol{\sigma} (\mathbf{v}) = \left( \sum_{j \in J_l} \phi_j (\mathbf{v} (i)) \right)_{(i, l) \in I
        \times L} \]
        is summable by \hyperref[ss3]{\textbf{SS3}} in $(W, \Sigma')$. We deduce that \hyperref[ss3]{\textbf{SS3}} holds.
  
        Let us next prove that \hyperref[ss4]{\textbf{SS4}} holds. Suppose that $J = J_1 \sqcup J_2$, that
        $\boldsymbol{\phi}_1 \colon J_1 \longrightarrow \tmop{Lin}^+ (V, W)$ and $\boldsymbol{\phi}_2 \colon J_2
        \longrightarrow \tmop{Lin}^+ (V, W)$ are $\tmop{Lin}$-summable and write $\boldsymbol{\phi}_1
        \sqcup \boldsymbol{\phi}_2 = \boldsymbol{\varphi}$. Then we have $\boldsymbol{\varphi} (\mathbf{v}) = \boldsymbol{\phi}_1 (\mathbf{v}) \sqcup \boldsymbol{\phi}_2
        (\mathbf{v})$, so $\boldsymbol{\varphi} (\mathbf{v})$ is summable by \hyperref[ss4]{\textbf{SS4}} in $(W, \Sigma')$.
  
        Assume now that $(W, \Sigma')$ is ultrafinite. Let $(f
        (j))_{j \in J}$ be a family of $k$-valued maps with finite domains $\dom f_j= X_j$ for each $j \in J$. The family $(f_j(x) \boldsymbol{\phi} (j) (\mathbf{v} (i)))_{(i, j) \in I \times J \wedge x \in X_j}$
        is summable by ultrafiniteness of $(W, \Sigma')$, so $f \boldsymbol{\phi}$ is
        $\tmop{Lin}$-summable. Thus \hyperref[uss]{\textbf{UF}} holds.
    \end{proof}

    he following proposition provides a criterion for
Lin-summability. 

    \begin{proposition}
    \label{prop-multisummability-criterion}Assume that $(V, \Sigma)$ is
        ultrafinite. Let
        $(\Omega,<)$ be a non-empty partially ordered set. Let $J$
        be a set and let $\boldsymbol{\phi} \colon J\longrightarrow \tmop{Lin}^+(
        \mathbf{H}_{\Omega} k, V)$ be a map. Then $\boldsymbol{\phi}$
        is $\tmop{Lin}$-summable if and only if for all Noetherian subsets $S \subseteq \Omega$, the family
        $(\boldsymbol{\phi}(j) (\mathbbm{1}_{p}))_{(j, p) \in J \times S}$ is summable in $(V,
        \Sigma)$.
    \end{proposition}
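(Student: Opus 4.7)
The plan is to prove the two implications separately. The forward direction is essentially immediate: for any Noetherian subset $S \subseteq \Omega$, the family $(\mathbbm{1}_{p})_{p \in S}$ lies in $\tmop{dom} \Sigma^{\Omega}_{S}$, because $\bigcup_{p \in S} \tmop{supp} \mathbbm{1}_{p} = S$ is Noetherian and each $q \in \Omega$ belongs to $\tmop{supp} \mathbbm{1}_{p}$ only when $p = q$. Applying the assumed $\tmop{Lin}$-summability of $\boldsymbol{\phi}$ to this family (with $I = S$) yields exactly that $(\boldsymbol{\phi}(j)(\mathbbm{1}_{p}))_{(j,p) \in J \times S}$ is summable in $(V, \Sigma)$.

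For the converse, fix a set $I$ and a family $\mathbf{v} \in \tmop{dom} \Sigma^{\Omega}_{I}$, and set $S := \bigcup_{i \in I} \tmop{supp} \mathbf{v}(i)$, which is Noetherian by definition of $\tmop{dom} \Sigma^{\Omega}_{I}$. The hypothesis gives that the family $\boldsymbol{\Phi} := (\boldsymbol{\phi}(j)(\mathbbm{1}_{p}))_{(j,p) \in J \times S}$ is summable in $(V, \Sigma)$. The crucial observation is that for every $p \in S$, the set $I_{p} := \{i \in I : p \in \tmop{supp} \mathbf{v}(i)\}$ is finite, again by the definition of $\tmop{dom} \Sigma^{\Omega}_{I}$. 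This is precisely what allows me to invoke ultrafiniteness of $(V, \Sigma)$: applying \hyperref[uss]{\textbf{(UF)}} to $\boldsymbol{\Phi}$ with the finite domains $X_{(j,p)} := I_{p}$ and the scalar maps $f_{(j,p)}\colon i \mapsto \mathbf{v}(i)(p)$ produces the summability of
\[
\bigl(\mathbf{v}(i)(p)\,\boldsymbol{\phi}(j)(\mathbbm{1}_{p})\bigr)_{((j,p),i)\,:\, (j,p) \in J \times S,\, i \in I_{p}}
\]
in $(V, \Sigma)$.

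It remains to reorganise this family into the target $(\boldsymbol{\phi}(j)(\mathbf{v}(i)))_{(i,j) \in I \times J}$. After reindexing via the bijection with $\{(i,j,p) : i \in I, j \in J, p \in \tmop{supp} \mathbf{v}(i)\}$ (justified by \hyperref[ss2]{\textbf{(SS2)}}), I partition the index set into the fibres over $(i,j) \in I \times J$. By \hyperref[ss3a]{\textbf{(SS3a)}}, each fibre $(\mathbf{v}(i)(p)\,\boldsymbol{\phi}(j)(\mathbbm{1}_{p}))_{p \in \tmop{supp} \mathbf{v}(i)}$ is summable, and strong linearity of $\boldsymbol{\phi}(j)$ applied to the canonical expansion $\mathbf{v}(i) = \sum_{p \in \tmop{supp} \mathbf{v}(i)} \mathbf{v}(i)(p)\,\mathbbm{1}_{p}$ identifies its sum with $\boldsymbol{\phi}(j)(\mathbf{v}(i))$. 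Finally, \hyperref[ss3b]{\textbf{(SS3b)}} delivers the summability of the family of these sums, which is exactly $(\boldsymbol{\phi}(j)(\mathbf{v}(i)))_{(i,j) \in I \times J}$, as required.

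The proof is more bookkeeping than content: no genuine difficulty is expected, as the only two real ingredients are ultrafiniteness of $(V, \Sigma)$ (used once, to create the coefficients $\mathbf{v}(i)(p)$) and the defining finiteness of the sets $I_{p}$ (which is precisely what makes this application legal). The main point to get right is to align the index sets at each stage so that \hyperref[ss2]{\textbf{(SS2)}} and \hyperref[ss3]{\textbf{(SS3)}} can be invoked cleanly.
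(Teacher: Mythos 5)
Your proof is correct and follows essentially the same route as the paper: reduce to the ``if'' direction, take $S=\bigcup_{i\in I}\tmop{supp}\mathbf{v}(i)$, invoke \hyperref[uss]{\textbf{(UF)}} with the finite sets $I_p$ to introduce the coefficients $\mathbf{v}(i)(p)$, and then reassemble via \hyperref[ss2]{\textbf{(SS2)}}/\hyperref[ss3]{\textbf{(SS3)}} and strong linearity of each $\boldsymbol{\phi}(j)$. The only (harmless) divergences are that you spell out the trivial ``only if'' direction and that you partition directly into possibly empty fibres where the paper first pads with zeros using \hyperref[ss4]{\textbf{(SS4)}}.
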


    \begin{proof}
        We need only prove the ``if'' direction of the equivalence. Let $I$ be a set and let $\mathbf{f}
        \colon I \longrightarrow \mathbf{H}_{\Omega} k$ be summable in $(\mathbf{H}_{\Omega} k,\Sigma^{\Omega})$.
        Write $I_p \assign \{ i \in I \suchthat p \in \tmop{supp} \mathbf{f} (i) \}$ for each $p \in \Omega$, and set
        \[ S \assign \bigcup_{i \in I} \tmop{supp} \mathbf{f} (i). \]
        Since $S$ is Noetherian, the family $(\boldsymbol{\phi}(j) (\mathbbm{1}_{\{ p \}}))_{j \in J
        \wedge p \in S}$ is summable in $(V, \Sigma)$. For $p \in \Omega$, the set $I_p$ is finite. We
        deduce by ultrafiniteness that the family $(\boldsymbol{\phi}(j) (\mathbbm{1}_p \mathbf{f}(i)
        (p)))_{j \in J \wedge p \in \Omega \wedge i \in I_p}$ is summable, whence
        $(\boldsymbol{\phi}(j) (\mathbbm{1}_p \mathbf{f}(i) (p)))_{j \in J \wedge p \in \Omega \wedge i \in
        I}$ is summable by \hyperref[ss4]{\textbf{SS4}} in $(V, \Sigma)$. It follows by \hyperref[ss3b]{\textbf{SS3b}} in $(V,\Sigma)$ that the family $\left( \sum_{p \in S} \boldsymbol{\phi}(j)
        (\mathbbm{1}_p \mathbf{f}(i) (p)) \right)_{j \in J \wedge i \in I}$ is summable.
        Each $\boldsymbol{\phi}(j)$ is strongly linear, so $(\boldsymbol{\phi}(j) (\mathbf{f} (i)))_{j \in J
        \wedge i \in I} = \left( \sum_{p \in S} \boldsymbol{\phi}(j) (\mathbbm{1}_p \mathbf{f}(i) (p))
        \right)_{j \in J \wedge i \in I}$. So $(\boldsymbol{\phi}(j) (\mathbf{f} (i)))_{j \in J \wedge i
        \in I}$ is summable, i.e.\ $\boldsymbol{\phi}$ is $\tmop{Lin}$-summable.
    \end{proof}

\subsection{Summability algebras}

    Let $(A, +, \cdot)$ be a algebra, where $(A, +, \cdot)$ is a possibly
    non-commutative, possibly non-unital ring. Let $\Sigma$ be a summability structure
    on the underlying vector space of $A$.

Given two sets $I,J$ and two maps $\mathbf{a} \colon I \longrightarrow A$ and $\mathbf{b} \colon J \longrightarrow A$, we write $\mathbf{a} \cdot \mathbf{b}$ for the map $I \times J \longrightarrow A \ ; (i,j) \mapsto \mathbf{a}(i) \cdot \mathbf{b}(j)$.
    \begin{definition}\label{def:summabilityalgebra}
        We say that $(A,+,\cdot, \Sigma)$ is a {\bf{summability
        algebra}}{\index{summability algebra}} if the following axiom is satisfied.

        \begin{descriptioncompact}
            \item[$\mathbf{SA}$] \label{sa}For any sets $I,J$ and any $(\mathbf{a}, \mathbf{b}) \in \tmop{dom} \Sigma_I
            \times \tmop{dom} \Sigma_J$, we have $\mathbf{a} \cdot \mathbf{b} \in \tmop{dom} \Sigma_{I \times J}$, and
            \[ \sum_{I \times J} (\mathbf{a} \cdot \mathbf{b}) = \left( \sum_I \mathbf{a} \right) \cdot \left(
                \sum_J \mathbf{b} \right) . \]
        \end{descriptioncompact}
    \end{definition}

    We write $\tmop{End}
  (A)$\label{autolab15} for the set of endomorphisms of algebra of $A$, and $\tmop{End}^+
  (A) \allowbreak = \tmop{End}(A) \cap \tmop{Lin}^+(A)$ for the set of strongly linear endomorphisms of algebra of $A$. By \Cref{prop-lin-subalgebra}, the set $\tmop{End}^+ (A)$ is closed under
composition.

    \begin{proposition}
    \label{prop-strong-products}Let $(A,+,\cdot, \Sigma)$ be a summability algebra. Then for any
        $a \in A$, the left and right product maps $\mathord{a \ \cdot} \: \colon A
        \longrightarrow A ; b \mapsto a \cdot b$ and $\mathord{\: \cdot \ a} \: \colon A
        \longrightarrow A ; b \mapsto b \cdot a$ are strongly linear.
    \end{proposition}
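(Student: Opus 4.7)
The plan is to reduce the claim directly to axiom \hyperref[sa]{$\mathbf{SA}$} by treating the single element $a$ as a summable family indexed by a singleton. Linearity of $b \mapsto a \cdot b$ and of $b \mapsto b \cdot a$ is immediate from the distributivity and $k$-bilinearity of the product in the algebra $A$, so the only content of the statement is strong linearity.

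Fix $a \in A$. Let $\ast$ be any one-point set and consider the family $\mathbf{a} \colon \{\ast\} \longrightarrow A$ defined by $\mathbf{a}(\ast) \assign a$. Since $\mathbf{a}$ has finite support, \hyperref[ss1]{\textbf{SS1}} gives $\mathbf{a} \in \tmop{dom}\Sigma_{\{\ast\}}$ and $\Sigma_{\{\ast\}}\mathbf{a}=a$. Now let $I$ be any set and $\mathbf{b} \in \tmop{dom}\Sigma_I$. By axiom \hyperref[sa]{$\mathbf{SA}$}, the family $\mathbf{a} \cdot \mathbf{b} \colon \{\ast\} \times I \longrightarrow A$ sending $(\ast,i)$ to $a \cdot \mathbf{b}(i)$ lies in $\tmop{dom}\Sigma_{\{\ast\} \times I}$, and
\[
\sum_{\{\ast\} \times I} \mathbf{a} \cdot \mathbf{b} \;=\; \Bigl(\sum_{\{\ast\}} \mathbf{a}\Bigr) \cdot \Bigl(\sum_I \mathbf{b}\Bigr) \;=\; a \cdot \Sigma_I \mathbf{b}.
\]

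Next, let $\varphi \colon I \longrightarrow \{\ast\} \times I$ be the obvious bijection $i \mapsto (\ast, i)$. Then $(\mathbf{a} \cdot \mathbf{b}) \circ \varphi$ is exactly the family $(a \cdot \mathbf{b}(i))_{i \in I}$, so by \hyperref[ss2]{\textbf{SS2}} this family lies in $\tmop{dom}\Sigma_I$ and
\[
\Sigma_I (a \cdot \mathbf{b}(i))_{i \in I} \;=\; \Sigma_{\{\ast\} \times I}(\mathbf{a} \cdot \mathbf{b}) \;=\; a \cdot \Sigma_I \mathbf{b}.
\]
This establishes strong linearity of $\mathord{a \,\cdot}\,$. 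The argument for $\mathord{\,\cdot\, a}$ is entirely symmetric: apply \hyperref[sa]{$\mathbf{SA}$} to $\mathbf{b}$ and the singleton family $\mathbf{a}$ in the opposite order, obtaining summability of $(\mathbf{b}(i) \cdot a)_{(i,\ast) \in I \times \{\ast\}}$, then transport along the bijection $I \longrightarrow I \times \{\ast\}$ using \hyperref[ss2]{\textbf{SS2}}.

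There is no serious obstacle; the proof is essentially a one-line application of \hyperref[sa]{$\mathbf{SA}$} combined with reindexing via \hyperref[ss2]{\textbf{SS2}}. The only thing to double-check is the bookkeeping that confirms $\mathbf{a} \cdot \mathbf{b}$ as defined in the paragraph preceding \Cref{def:summabilityalgebra} really specializes to the family one wants, which is immediate from the definition $\mathbf{a} \cdot \mathbf{b} \colon (i,j) \mapsto \mathbf{a}(i) \cdot \mathbf{b}(j)$.
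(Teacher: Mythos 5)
Your proof is correct and follows essentially the same route as the paper: apply \hyperref[sa]{$\mathbf{SA}$} with $a$ viewed as a family over a singleton and reindex. The paper's version is just terser, leaving the \hyperref[ss1]{\textbf{SS1}} and \hyperref[ss2]{\textbf{SS2}} bookkeeping implicit, whereas you spell it out.
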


    \begin{proof}
        By \hyperref[sa]{\textbf{SA}}, for all summable families $(b_i)_{i \in I}$ in $A$, the
        families $(a \cdot b_i)_{i \in I}$ and $(b_i \cdot a)_{i \in I}$ are
        summable, with $\sum_{i \in I} a \cdot b_i = a \cdot \left( \sum_{i \in I}
        b_i \right)$ and $\sum_{i \in I} b_i \cdot a = \left( \sum_{i \in I} b_i
        \right) \cdot a$.
    \end{proof}

    We next introduce our two main examples of summability algebras. Fix a partially ordered monoid $(M, +, 0, <)$, i.e.\ a monoid $(M, +, 0)$ together
with a partial ordering $<$ on $M$ with
\begin{equation}
  \forall f, g, h \in M, f < g \Longrightarrow (f + h < g + h \wedge h + f < h
  + g) . \label{eq-ordered-monoid}
\end{equation}

\begin{lemma}\label{lem:noethsum} \tmem{\cite[(2.1)]{ribenboim}} Let $X,Y \subseteq M$ be Noetherian subsets. Then $X+Y\assign \{x+y \ \colon \ (x,y) \in X \times Y\}$ is Noetherian.
\end{lemma}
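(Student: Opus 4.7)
The plan is to combine the subsequence characterisation of Noetherianness from \Cref{prop-Noetherian-subsequence} with the monotonicity property \eqref{eq-ordered-monoid} of the ordered monoid $(M,+,0,<)$.

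First, I observe that \eqref{eq-ordered-monoid} also holds for the non-strict ordering $\leqslant$: if $f\leqslant g$, then either $f=g$, in which case $f+h=g+h$ and $h+f=h+g$, or $f<g$, in which case $f+h<g+h$ and $h+f<h+g$; either way, $f+h\leqslant g+h$ and $h+f\leqslant h+g$.

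Now let $(z_n)_{n\in\N}$ be an arbitrary sequence in $X+Y$. For each $n\in\N$, pick a decomposition $z_n=x_n+y_n$ with $x_n\in X$ and $y_n\in Y$. By \Cref{prop-Noetherian-subsequence} applied to the Noetherian set $X$, the sequence $(x_n)_{n\in\N}$ admits an increasing subsequence $(x_{\varphi(n)})_{n\in\N}$, where $\varphi\colon\N\longrightarrow\N$ is strictly increasing. Applying \Cref{prop-Noetherian-subsequence} again to the Noetherian set $Y$, the sequence $(y_{\varphi(n)})_{n\in\N}$ admits an increasing subsequence $(y_{\varphi(\psi(n))})_{n\in\N}$, where $\psi\colon\N\longrightarrow\N$ is strictly increasing. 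Setting $\chi\assign\varphi\circ\psi$, which is strictly increasing, the sequence $(x_{\chi(n)})_{n\in\N}$ is still increasing as a subsequence of $(x_{\varphi(n)})_{n\in\N}$, and $(y_{\chi(n)})_{n\in\N}$ is increasing by construction.

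By the monotonicity of $+$ applied twice, for any $m<n$ we have
\[ z_{\chi(m)}=x_{\chi(m)}+y_{\chi(m)}\leqslant x_{\chi(n)}+y_{\chi(m)}\leqslant x_{\chi(n)}+y_{\chi(n)}=z_{\chi(n)}, \]
so $(z_{\chi(n)})_{n\in\N}$ is an increasing subsequence of $(z_n)_{n\in\N}$. Since every sequence in $X+Y$ admits an increasing subsequence, \Cref{prop-Noetherian-subsequence} yields that $X+Y$ is Noetherian.

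The only potential obstacle is the bookkeeping of successive extractions of subsequences, which is straightforward once one notes that a subsequence of an increasing sequence remains increasing.
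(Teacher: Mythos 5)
Your proof is correct: the paper itself gives no proof of this lemma, citing it from Ribenboim, and your argument via the subsequence characterisation of Noetherianness (\Cref{prop-Noetherian-subsequence}) together with a double extraction of increasing subsequences is exactly the standard proof of this fact. The only detail worth noting is that you correctly use both the left and the right monotonicity clauses of \eqref{eq-ordered-monoid}, which matters since $(M,+)$ is not assumed commutative; the empty case is handled vacuously.
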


\begin{lemma}\label{lem:noethfin} \tmem{\cite[(4.1)]{ribenboim}} Let $X,Y \subseteq M$ be Noetherian subsets. For $m \in M$, the set $I\assign \{ (x,y) \in X \times Y \suchthat
     x + y = m \}$ is finite.
\end{lemma}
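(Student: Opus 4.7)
My plan is to argue by contradiction, using \Cref{prop-Noetherian-subsequence} to extract monotone subsequences and then use the order-compatibility axiom \eqref{eq-ordered-monoid} to produce a strictly increasing chain that forbids the equality $x+y=m$ from holding throughout. Assume that the set $I$ is infinite. Then by dependent choice I can fix a sequence $((x_n,y_n))_{n \in \mathbb{N}}$ of pairwise distinct elements of $I$.

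I would then invoke \Cref{prop-Noetherian-subsequence} twice. Since $X$ is Noetherian, and every subset of a Noetherian set is Noetherian by \Cref{prop-Noetherian-gloub}, the sequence $(x_n)_{n \in \mathbb{N}}$ admits an increasing subsequence $(x_{\varphi(n)})_{n \in \mathbb{N}}$. Applying the same proposition to the sequence $(y_{\varphi(n)})_{n \in \mathbb{N}}$ inside the Noetherian set $Y$ yields a further subsequence in which \emph{both} coordinates are (weakly) increasing. After reindexing, I may therefore assume that $x_n \leqslant x_{n+1}$ and $y_n \leqslant y_{n+1}$ for all $n$, while the pairs $(x_n,y_n)$ remain pairwise distinct (since distinctness is inherited by any subsequence).

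Now, pairwise distinctness forces $(x_n,y_n) \neq (x_{n+1},y_{n+1})$, so at least one of $x_n < x_{n+1}$ or $y_n < y_{n+1}$ holds. Applying \eqref{eq-ordered-monoid}: if $x_n < x_{n+1}$, then $x_n + y_n < x_{n+1} + y_n$, and $y_n \leqslant y_{n+1}$ gives $x_{n+1} + y_n \leqslant x_{n+1} + y_{n+1}$, so $x_n + y_n < x_{n+1} + y_{n+1}$; the case $y_n < y_{n+1}$ is symmetric. In either case $m = x_n + y_n < x_{n+1} + y_{n+1} = m$, contradicting the irreflexivity of~$<$.

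There is no real obstacle here: the argument is a direct application of \Cref{prop-Noetherian-subsequence} combined with the monoid order-compatibility in \eqref{eq-ordered-monoid}. The only point of minor care is that Noetherianness of a partially ordered set only guarantees weakly increasing (not strictly increasing) subsequences, so one must explicitly use distinctness of the pairs together with weak monotonicity to obtain the strict inequality needed for the contradiction.
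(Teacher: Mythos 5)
Your argument is correct. The paper gives no proof of this lemma — it is quoted from Ribenboim \cite[(4.1)]{ribenboim} — and your proof is the standard one: extract weakly increasing subsequences in both coordinates via \Cref{prop-Noetherian-subsequence}, use pairwise distinctness to get a strict increase in at least one coordinate, and apply \eqref{eq-ordered-monoid} together with transitivity to derive $m<m$. The one point you rightly flag (that Noetherianity only yields weakly increasing subsequences, so distinctness of the pairs must be invoked to obtain strictness) is exactly the subtlety that makes the argument work without assuming cancellativity of $M$.
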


We
write $k(\!(M)\!) \assign \mathbf{H}_M k$, and we regard it as an ultrafinite summability space for the summmability structure $\Sigma$ of \Cref{prop-gloub-spaces}. As in {\cite{Serra:phd,KuhlSer:vaut}}, given
$m \in M$, we write $t^m \assign \mathbbm{1}_m$.
For each $a \in k(\!(M)\!)$, the
family $(a (m) t^m)_{m \in M}$ is summable, with sum
\[ a = \sum_{m \in M} a (m) t^m . \]
The vector space $k(\!(M)\!)$ is equipped \cite[Section~4]{ribenboim} with the Cauchy product
\begin{equation}
  \forall m \in M, (a \cdot b) (m) \assign \sum_{m_1 + m_2 = m} a (m_1) b (m_2),
  \label{eq-Noetherian-prod}
\end{equation} for which it is a unital ring with
  multiplicative identity $1 \assign t^0$. It is easy to see that it is an algebra over $k$.

\begin{proposition}\label{prop-Noeth-salgebra}
  The structure $(k(\!(M)\!),\Sigma)$ is a summability
  algebra.
\end{proposition}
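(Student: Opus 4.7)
My plan is to verify axiom \hyperref[sa]{\textbf{SA}} directly by unpacking the definitions. Fix summable families $\mathbf{a} \in \dom \Sigma_I$ and $\mathbf{b} \in \dom \Sigma_J$, set $S_a \assign \bigcup_{i \in I} \tmop{supp} \mathbf{a}(i)$ and $S_b \assign \bigcup_{j \in J} \tmop{supp} \mathbf{b}(j)$, which are Noetherian by assumption. I first need to show that $\mathbf{a}\cdot\mathbf{b}$, regarded as a family indexed by $I\times J$ with values in $k(\!(M)\!)$, lies in $\dom \Sigma^M_{I\times J}$, which amounts to checking the two conditions of \Cref{def:Noetheriansummability}.

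For the Noetherian support condition, the Cauchy product formula \eqref{eq-Noetherian-prod} yields the inclusion $\tmop{supp}(\mathbf{a}(i)\cdot \mathbf{b}(j)) \subseteq \tmop{supp}\mathbf{a}(i) + \tmop{supp}\mathbf{b}(j) \subseteq S_a + S_b$, and $S_a + S_b$ is Noetherian by \Cref{lem:noethsum}. For the pointwise finiteness condition, fix $m \in M$. If $m \in \tmop{supp}(\mathbf{a}(i)\cdot \mathbf{b}(j))$, then there exist $(m_1,m_2)\in \tmop{supp}\mathbf{a}(i)\times \tmop{supp}\mathbf{b}(j) \subseteq S_a\times S_b$ with $m_1+m_2 = m$. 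By \Cref{lem:noethfin}, the set $P_m \assign \{(m_1,m_2)\in S_a\times S_b \suchthat m_1+m_2 = m\}$ is finite, and for each $(m_1,m_2)\in P_m$ both $\{i \in I \suchthat m_1\in \tmop{supp}\mathbf{a}(i)\}$ and $\{j\in J \suchthat m_2\in \tmop{supp}\mathbf{b}(j)\}$ are finite by summability of $\mathbf{a}$ and $\mathbf{b}$. Hence $\{(i,j) \suchthat m\in \tmop{supp}(\mathbf{a}(i)\cdot\mathbf{b}(j))\}$ is contained in a finite union of finite Cartesian products, so it is finite.

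Once summability is established, I will compute the sum pointwise: for any fixed $m\in M$, the value $(\Sigma_{I\times J}(\mathbf{a}\cdot\mathbf{b}))(m) = \sum_{(i,j)\in I\times J}\sum_{m_1+m_2=m}\mathbf{a}(i)(m_1)\mathbf{b}(j)(m_2)$ involves only finitely many non-zero terms by the finiteness just proved together with the fact that the inner Cauchy sum over $\{(m_1,m_2) \suchthat m_1+m_2 =m\}\cap (S_a\times S_b)$ is already finite. I can therefore reorganise the finite double sum as $\sum_{(m_1,m_2)\in P_m}\big(\sum_{i\in I}\mathbf{a}(i)(m_1)\big)\big(\sum_{j\in J}\mathbf{b}(j)(m_2)\big)$, which is precisely $\big((\Sigma_I\mathbf{a})\cdot(\Sigma_J\mathbf{b})\big)(m)$ by definition of the Cauchy product and of $\Sigma^M$.

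The only mildly delicate step is the bookkeeping in the pointwise finiteness argument, since it mixes the two structural assumptions (Noetherianity of $S_a, S_b$ and pointwise finiteness of the index sets $I_{\mathbf{a},p}$, $J_{\mathbf{b},p}$); everything else reduces to finite manipulations and the already-established summability structure on $k(\!(M)\!)$.
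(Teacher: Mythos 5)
Your proposal is correct and follows essentially the same route as the paper's proof: the support inclusion into $S_a+S_b$ with \Cref{lem:noethsum}, the pointwise finiteness via \Cref{lem:noethfin} and a finite union of finite Cartesian products, and the pointwise reorganisation of the double sum into the product of the two sums. No gaps.
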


\begin{proof}
  We have to show that \hyperref[sa]{\textbf{SA}} holds. Let $I,J$ be sets, let $\mathbf{a}\colon I \longrightarrow k(\!(M)\!)$ and $\mathbf{b}\colon J \longrightarrow k(\!(M)\!)$ be summable. Let $m \in M$. We have

  \begin{eqnarray*}
  \bigcup \limits_{(i,j) \in I \times J} \tmop{supp} \mathbf{a}(i) \cdot \mathbf{b}(j) & \subseteq & \bigcup \limits_{(i,j) \in I \times J} \tmop{supp} \mathbf{a}(i) + \tmop{supp} \mathbf{b}(j) \\
  & \subseteq & \bigcup \limits_{i \in I} \tmop{supp} \mathbf{a}(i) +\bigcup \limits_{j \in J} \tmop{supp} \mathbf{b}(j),\\
\end{eqnarray*}
which is Noetherian by \Cref{lem:noethsum}. Let $m \in M$, and write $S \assign \{(m_1,m_2) \in \bigcup \limits_{i \in I} \tmop{supp} \mathbf{a}(i) \times \bigcup \limits_{j \in J} \tmop{supp} \mathbf{b}(j) \suchthat m= m_1+m_2\}$. Note that $I$ is finite by \Cref{lem:noethfin}. Now the set $\{(i,j) \in I \times J \ \colon \ m \in \tmop{supp} \mathbf{a}(i)\cdot \mathbf{b}(j)\}$ is contained in the finite union $\bigcup \limits_{(m_1,m_2) \in S} \{i \in I \suchthat m_1 \in \tmop{supp} \mathbf{a}(i)\} \times \{j \in J \suchthat m_2 \in \tmop{supp} \mathbf{b}(j)\}$ of cartesian products of finite sets, so it is finite. Thus $\mathbf{a} \cdot \mathbf{b}$ is summable.
  
  For $m \in M$, the previous arguments give
  \begin{eqnarray*}
    \left( \sum_{(i, j) \in I \times J} \mathbf{a}(i) \cdot \mathbf{b}(j) \right) (m) & = &
    \sum_{m_1 +m_2 = m} \sum_{(i, j) \in I \times
    J} \mathbf{a}(i) (m_1) \mathbf{b}(j) (m_2)\\
    & = & \sum_{m_1+m_2=m} \left( \sum_{i \in I} \mathbf{a}(i)
    (m_1) \right)  \left( \sum_{j \in J} \mathbf{b}(j) (m_2) \right)\\
    & = & \sum_{m_1+m_2 = m} \left( \sum_{i \in I} \mathbf{a}(i) \right)
    (m_1)  \left( \sum_{j \in J} \mathbf{b}(j) \right) (m_2)\\
    & = & \left( \left( \sum_{i \in I} \mathbf{a}(i) \right) \cdot \left( \sum_{j \in
    J} \mathbf{b}(j) \right) \right) (m) .
  \end{eqnarray*}
  This shows that $\sum_{(i, j) \in I \times J} \mathbf{a}(i) \cdot \mathbf{b}(j) = ( \sum_{i
  \in I} \mathbf{a}(i))  \cdot ( \sum_{j \in J} \mathbf{b}(j))$, hence that
  \hyperref[sa]{\textbf{SA}} holds.
\end{proof}

In particular, for $a, b \in k(\!(M)\!)$, the family $(a (m_1) b (m_2) t^{m_1 +
m_2})_{m_1, m_2 \in M}$ is summable, with sum
\[ a \cdot b = \sum_{m_1, m_2 \in M} a (m_1) b (m_2) t^{m_1 + m_2} . \]
We call $k(\!(M)\!)$ the {\bf{algebra of Noetherian
series}}{\index{algebra of Noetherian series}} with exponents in $M$.

Recall from \Cref{subsection-strongly-linear-maps} that given a summability space $(V,\Sigma)$, the space of strongly linear maps $V\rightarrow V$ is a summability space for a natural structure $\Sigma^{\tmop{Lin}}$, and that it is a subalgebra of $\tmop{Lin}(V)$.

\begin{proposition}\label{prop:Linalgebra}
        Let $(V, \Sigma)$ be a summability space. Then $(\tmop{Lin}^+ (V),+,\circ,\Sigma^{\tmop{Lin}})$ is a summability algebra.
    \end{proposition}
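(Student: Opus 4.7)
The plan is to verify that $(\tmop{Lin}^+(V), +, \circ, \Sigma^{\tmop{Lin}})$ satisfies axiom \hyperref[sa]{\textbf{SA}}, since \Cref{prop-multi-strong} already shows $(\tmop{Lin}^+(V), \Sigma^{\tmop{Lin}})$ is a summability space and \Cref{prop-lin-subalgebra} shows that $(\tmop{Lin}^+(V), +, \circ)$ is a (subalgebra of $\tmop{Lin}(V)$, hence an) algebra. So the entire work amounts to checking that composition interacts correctly with $\Sigma^{\tmop{Lin}}$.

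Fix sets $I, J$ and $\tmop{Lin}$-summable families $\boldsymbol{\phi}\colon I \longrightarrow \tmop{Lin}^+(V)$ and $\boldsymbol{\psi}\colon J \longrightarrow \tmop{Lin}^+(V)$. Write $\boldsymbol{\phi}\cdot\boldsymbol{\psi}\colon I \times J \longrightarrow \tmop{Lin}^+(V)$ for the family $(i,j)\mapsto \boldsymbol{\phi}(i)\circ \boldsymbol{\psi}(j)$; note that this indeed ranges in $\tmop{Lin}^+(V)$ by \Cref{prop-lin-subalgebra}. To show $\tmop{Lin}$-summability, pick any set $K$ and any $\mathbf{v}\in \dom \Sigma_K$. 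By $\tmop{Lin}$-summability of $\boldsymbol{\psi}$, the family $(\boldsymbol{\psi}(j)(\mathbf{v}(k)))_{(k,j)\in K\times J}$ is summable in $(V,\Sigma)$. Applying $\tmop{Lin}$-summability of $\boldsymbol{\phi}$ to this summable family (indexed by $K\times J$) yields summability of $(\boldsymbol{\phi}(i)(\boldsymbol{\psi}(j)(\mathbf{v}(k))))_{((k,j),i)\in (K\times J)\times I}$. A reindexing under the canonical bijection $(K\times J)\times I \longrightarrow K\times(I\times J)$ and \hyperref[ss2]{\textbf{SS2}} then give summability of $(\boldsymbol{\phi}(i)(\boldsymbol{\psi}(j)(\mathbf{v}(k))))_{(k,(i,j))\in K\times (I\times J)}$, which is exactly the condition that $\boldsymbol{\phi}\cdot\boldsymbol{\psi}$ be $\tmop{Lin}$-summable.

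It remains to verify the identity $\sum_{(i,j)\in I\times J}(\boldsymbol{\phi}(i)\circ\boldsymbol{\psi}(j)) = \bigl(\sum_{i\in I}\boldsymbol{\phi}(i)\bigr)\circ\bigl(\sum_{j\in J}\boldsymbol{\psi}(j)\bigr)$ in $\tmop{Lin}^+(V)$. By the definition of $\Sigma^{\tmop{Lin}}$, this amounts to checking the identity pointwise: for any $v \in V$,
\[
\Sigma'_{I\times J}\bigl(\boldsymbol{\phi}(i)(\boldsymbol{\psi}(j)(v))\bigr)_{(i,j)} = \Bigl(\sum_{i\in I}\boldsymbol{\phi}(i)\Bigr)\Bigl(\sum_{j\in J}\boldsymbol{\psi}(j)(v)\Bigr).
\]
Unfolding the outer sum on the right via the definition of $\Sigma^{\tmop{Lin}}$ gives $\sum_{i\in I}\boldsymbol{\phi}(i)\bigl(\sum_{j\in J}\boldsymbol{\psi}(j)(v)\bigr)$. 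By the strong linearity of each individual $\boldsymbol{\phi}(i)$, the inner sum passes inside to yield $\sum_{i\in I}\sum_{j\in J}\boldsymbol{\phi}(i)(\boldsymbol{\psi}(j)(v))$. An application of \hyperref[ss3]{\textbf{SS3}} to the decomposition $I\times J = \bigsqcup_{i\in I}\{i\}\times J$ rewrites this as the iterated sum $\sum_{(i,j)\in I\times J}\boldsymbol{\phi}(i)(\boldsymbol{\psi}(j)(v))$, matching the left-hand side.

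The argument is essentially bookkeeping: the only conceptual ingredient is observing that $\tmop{Lin}$-summability of $\boldsymbol{\phi}$ already treats $\mathbf{v}$ as ranging over an arbitrary summable family in $V$, in particular one of the form $(\boldsymbol{\psi}(j)(\mathbf{v}(k)))_{(k,j)\in K\times J}$. The potential pitfall is to reverse the order of application, since the inner map $\boldsymbol{\psi}(j)$ acts first; provided one feeds the output of $\boldsymbol{\psi}$ into the $\tmop{Lin}$-summability hypothesis of $\boldsymbol{\phi}$ (and not vice versa), the axioms \hyperref[ss2]{\textbf{SS2}} and \hyperref[ss3]{\textbf{SS3}} of $(V,\Sigma)$ together with strong linearity of $\boldsymbol{\phi}(i)$ close the argument.
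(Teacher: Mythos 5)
Your proof is correct and follows essentially the same route as the paper's: reduce to verifying \hyperref[sa]{\textbf{SA}} via \Cref{prop-multi-strong} and \Cref{prop-lin-subalgebra}, establish $\tmop{Lin}$-summability of $(\boldsymbol{\phi}(i)\circ\boldsymbol{\psi}(j))_{(i,j)}$ by feeding the output family of $\boldsymbol{\psi}$ into the $\tmop{Lin}$-summability hypothesis of $\boldsymbol{\phi}$, and then check the sum identity pointwise using strong linearity of each $\boldsymbol{\phi}(i)$ and \hyperref[ss3]{\textbf{SS3}}. Your explicit reindexing step via \hyperref[ss2]{\textbf{SS2}} is a detail the paper leaves implicit, but the argument is the same.
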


    \begin{proof}
        By \Cref{prop-multi-strong}, we need only prove that \hyperref[sa]{\textbf{SA}} holds in
        $\tmop{Lin}^+ (V)$. Let $I$ and $J$ be sets, let $\boldsymbol{\phi} \colon I\longrightarrow \tmop{Lin}^+(V)$ and $\boldsymbol{\psi} \colon J \longrightarrow \tmop{Lin}^+(V)$ be
        $\tmop{Lin}$-summable with respective sums $\phi$ and $\psi$. Let $L$ be a set and let $\mathbf{v}
        \colon L \longrightarrow V$ be summable in $(V, \Sigma)$. 
  
        Since $\boldsymbol{\psi}$ is $\tmop{Lin}$-summable, the family $(\boldsymbol{\psi}(j) (\mathbf{v} (l)))_{(j, l) \in J \times
        L}$ is summable in $(V, \Sigma)$. Since $\boldsymbol{\phi}$ is
        $\tmop{Lin}$-summable, the family $((\boldsymbol{\phi}(i) \circ \boldsymbol{\psi}(j)) (\mathbf{v} (l)))_{(i, j, l) \in I
        \times J \times L}$ is summable in $(V, \Sigma)$. This shows that $(\boldsymbol{\phi}(i)
        \circ \boldsymbol{\psi}(j))_{(i, j) \in I \times J}$ is $\tmop{Lin}$-summable. Let $v_0 \in V$. As
        above, the family
        \[ F \assign ((\boldsymbol{\phi}(i) \circ \boldsymbol{\psi}(j)) (v_0))_{(i, j, l) \in I \times J} \]
        is summable in $(V, \Sigma)$. We have
        \begin{align*}
            (\phi \circ \psi) (v_0) & = \left( \sum_{i \in I} \boldsymbol{\phi}(i) \right) \left( \sum_{j
            \in J} \boldsymbol{\psi}(j) (v_0) \right)&& \\
            & = \sum_{i \in I} \boldsymbol{\phi}(i) \left( \sum_{j \in J} \boldsymbol{\psi}(j) (v_0) \right)&& \\
            & = \sum_{i \in I} \sum_{j \in J} \boldsymbol{\phi}(i) (\boldsymbol{\psi}(j) (v_0))
            && \text{(each $\boldsymbol{\phi}(i)$ is strongly linear)}\\
            & = \sum_{(i, j) \in I \times J} \boldsymbol{\phi}(i) (\boldsymbol{\psi}(j) (v_0)) .
            && \text{(\hyperref[ss3]{\textbf{SS3}})}
        \end{align*}
        So $\phi \circ \psi = \sum_{I \times J} \boldsymbol{\phi}(i) \circ \boldsymbol{\psi}(j)$. Therefore \hyperref[sa]{\textbf{SA}} holds
        in $\tmop{Lin}^+ (V)$.
    \end{proof}

    A {\bf{closed ideal}}{\index{closed ideal}} of a summability algebra $(A,+,\cdot,
    \Sigma)$ is a {\emph{two-sided}} ideal of $A$ which is also a closed subspace
    of~$(A, \Sigma)$.

\subsection{Algebras of formal power series}\label{subsection-kJJ}
    
    \subsubsection*{The free monoid $J^{\star}$ of words over an alphabet $J$} Let $J$ be a set. Write\label{autolab17}
\[ J^{\star} \assign \bigcup_{n \in \mathbb{N}} J^n \]
where $J^0 = \{ \emptyset \}$ and $\emptyset$ denotes the empty word. We see elements of $J^{\star}$ as
{\bf{finite words}}{\index{word}} with alphabet $J$.

For $m, n \in \mathbb{N}$, if $\beta = (\beta_1, \ldots, \beta_m), \gamma = 
(\gamma_1, \ldots, \gamma_n) \in J^{\star}$, we define\label{autolab18}
\[ \beta \gamma \assign (\beta_1, \ldots, \beta_m, \gamma_1, \ldots,
   \gamma_n) \in J^{m + n}\subseteq J^{\star}, \]
where it is implied that $\emptyset \theta = \theta  \emptyset =
\theta$ for all $\theta \in J^{\star}$. This concatenation operation endows $J^{\star}$ with a structure of
cancellative monoid such that for $\theta \in
J^{\star}$, the set
\begin{equation}
  \{ (\beta, \gamma) \in J^{\star} \times J^{\star} \suchthat \theta = \beta
  \gamma \} \label{eq-set-finite}
\end{equation}
is finite (it has exactly $n + 1$ elements when $\theta \in J^n$).

The following 
construction, also considered in \cite[Chapter~0, p 17]{reutenauer}, is a particular case of Bourbaki's
notion of total algebra {\cite[Chapter~III, Section~2.10]{Bou:alg}}. 

\begin{proposition}
  The vector space $k^{J^{\star}}$ is a unital algebra under the Cauchy product
  \begin{equation}
    (P \cdot Q) (\theta) \assign \sum_{\theta =
    \beta \gamma} P (\beta) Q (\gamma) , \label{eq-def-prod}
  \end{equation}
  for any $P,Q\in k^{J^{\star}}$ and any $\theta \in J^{\star}$.
\end{proposition}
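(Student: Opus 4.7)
The plan is to verify in turn the four requirements for $(k^{J^{\star}}, +, \cdot)$ to be a unital associative $k$-algebra: well-definedness of the Cauchy product, bilinearity, associativity, and existence of a two-sided unit.

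First, I would observe that for every $\theta \in J^{\star}$, the indexing set $\{(\beta, \gamma) \in J^{\star} \times J^{\star} \suchthat \theta = \beta \gamma\}$ appearing in \eqref{eq-def-prod} is finite by \eqref{eq-set-finite}, so $(P \cdot Q)(\theta)$ is an ordinary finite sum in $k$ and $P \cdot Q$ is a well-defined element of $k^{J^{\star}}$. Bilinearity of $(P, Q) \mapsto P \cdot Q$ in $P$ and $Q$ is then immediate from the distributivity of multiplication over addition in $k$, applied inside each of these finite sums, together with the fact that the pointwise sum of scalar-valued functions on $J^{\star}$ is again in $k^{J^{\star}}$.

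For associativity, given $P, Q, R \in k^{J^{\star}}$ and $\theta \in J^{\star}$, I would expand
\[ ((P \cdot Q) \cdot R)(\theta) = \sum_{\theta = \alpha \gamma} \sum_{\alpha = \beta \delta} P(\beta) Q(\delta) R(\gamma) \]
and
\[ (P \cdot (Q \cdot R))(\theta) = \sum_{\theta = \beta \mu} \sum_{\mu = \delta \gamma} P(\beta) Q(\delta) R(\gamma), \]
and show that both equal $\sum_{\theta = \beta \delta \gamma} P(\beta) Q(\delta) R(\gamma)$, the sum ranging over the finite set of triples $(\beta, \delta, \gamma) \in (J^{\star})^3$ with $\beta \delta \gamma = \theta$. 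The required identification is the set-theoretic statement that a pair of a decomposition $\theta = \alpha \gamma$ together with a decomposition $\alpha = \beta \delta$ is the same datum as a pair of a decomposition $\theta = \beta \mu$ together with a decomposition $\mu = \delta \gamma$; this in turn is a direct consequence of the associativity of concatenation on $J^{\star}$.

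Finally, I would verify that the indicator $\mathds{1}_{\emptyset}^{J^{\star}}$ is a two-sided unit: for any $P \in k^{J^{\star}}$ and any $\theta \in J^{\star}$, the expansion $(\mathds{1}_{\emptyset} \cdot P)(\theta) = \sum_{\theta = \beta \gamma} \mathds{1}_{\emptyset}(\beta) P(\gamma)$ collapses to the single contribution $(\beta, \gamma) = (\emptyset, \theta)$, yielding $P(\theta)$, and symmetrically $(P \cdot \mathds{1}_{\emptyset})(\theta) = P(\theta)$ using $\theta \emptyset = \theta$. Since every step reduces to a routine manipulation of finite sums in $k$ once well-definedness is secured, I do not expect any genuine obstacle: the statement is simply the specialisation to the monoid $(J^{\star}, \cdot, \emptyset)$ of Bourbaki's total algebra construction recalled just above the proposition, and the finiteness condition \eqref{eq-set-finite} is exactly what is needed to make that construction work.
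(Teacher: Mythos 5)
Your proof is correct. The paper does not actually supply a proof of this proposition --- it defers to Bourbaki's total algebra construction and to Reutenauer --- and your verification (finiteness of the decomposition set \eqref{eq-set-finite} for well-definedness of each coefficient, the bijection between iterated decompositions and triples $(\beta,\delta,\gamma)$ with $\beta\delta\gamma=\theta$ for associativity, and $\mathds{1}_{\emptyset}$ as the two-sided unit) is precisely the standard argument that those references encapsulate.
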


    \begin{notation} \label{notation-kJ}
        We write $k \langle\! \langle J \rangle\! \rangle \assign
k^{J^{\star}}$\label{autolab19}.
    \end{notation}

    We recall some properties of $k \langle\! \langle J \rangle\! \rangle$.
For $P \in k \langle\! \langle J \rangle\! \rangle$ and $n \in \mathbb{N}$, we
write
\[ \tmop{supp}_n P \assign (\tmop{supp} P) \cap J^n = \{ \theta \in J^n
   \suchthat P (\theta) \neq \emptyset \} . \]
Note that we have
\begin{equation}
  \tmop{supp}_n P \cdot Q \subseteq \bigcup_{m + p = n} (\tmop{supp}_m P) \
  (\tmop{supp}_p Q) \label{eq-support-identity},
\end{equation}
where $A B = \{ a b \suchthat (a, b) \in A \times B \}$ for all subsets
$A, B \subseteq J^{\star}$. Concatenation of subsets of $J^{\star}$ is
associative. An easy induction gives:

\begin{lemma}
  \label{lem-support-multiproduct}For $m, n \in \mathbb{N}$ and $P_1, \ldots,
  P_n \in k \langle\! \langle J \rangle\! \rangle$, we have
  \[ \tmop{supp}_m P_1 \cdots P_n \subseteq \bigcup_{m_1 + \cdots + m_n = m}
     (\tmop{supp}_{m_1} P_1) \cdots (\tmop{supp}_{m_n} P_n) . \]\qed
\end{lemma}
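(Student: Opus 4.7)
The plan is to prove the lemma by induction on $n$, using \eqref{eq-support-identity} as the base and inductive engine, and the associativity of concatenation of subsets of $J^{\star}$ to rearrange the resulting unions.

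For the base case $n = 1$, the statement reduces to $\tmop{supp}_m P_1 \subseteq \tmop{supp}_m P_1$, which is trivially true (the indexing set collapses to the single tuple $m_1 = m$). The case $n = 2$ is exactly \eqref{eq-support-identity} and need not be treated separately, but it is the instance of the definition of the Cauchy product \eqref{eq-def-prod} that drives the whole argument.

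For the inductive step, I would assume the inclusion for $n$ products and establish it for $n+1$. Writing $P_1 \cdots P_{n+1} = (P_1 \cdots P_n) \cdot P_{n+1}$, a direct application of \eqref{eq-support-identity} gives
\[ \tmop{supp}_m (P_1 \cdots P_{n+1}) \subseteq \bigcup_{\ell + m_{n+1} = m} \bigl(\tmop{supp}_{\ell}(P_1 \cdots P_n)\bigr) \bigl(\tmop{supp}_{m_{n+1}} P_{n+1}\bigr). \]
The induction hypothesis applied to $\tmop{supp}_{\ell}(P_1 \cdots P_n)$, combined with the fact that concatenation of subsets of $J^{\star}$ distributes over unions and is associative (so that $(AB)C = A(BC)$ for $A, B, C \subseteq J^{\star}$), yields
\[ \tmop{supp}_m (P_1 \cdots P_{n+1}) \subseteq \bigcup_{\ell + m_{n+1} = m}\ \bigcup_{m_1 + \cdots + m_n = \ell} (\tmop{supp}_{m_1} P_1) \cdots (\tmop{supp}_{m_n} P_n)(\tmop{supp}_{m_{n+1}} P_{n+1}), \]
and the double union collapses to a single union over all $(n+1)$-tuples $(m_1, \ldots, m_{n+1})$ with $m_1 + \cdots + m_{n+1} = m$, giving the desired bound.

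There is essentially no obstacle here; the only point requiring a moment's thought is the bookkeeping of indices and the tacit use of associativity of word concatenation to justify dropping parentheses in the product of supports. Since $J^{\star}$ is literally the free monoid of words under concatenation, this associativity is built in, so the induction really is as easy as the authors claim.
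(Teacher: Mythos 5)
Your proof is correct and is exactly the induction the paper has in mind: the text introduces \eqref{eq-support-identity} and then states the lemma with only the remark ``An easy induction gives'', so your argument (induct on $n$, peel off the last factor via \eqref{eq-support-identity}, apply the induction hypothesis, and use associativity of concatenation of subsets of $J^{\star}$ to collapse the double union) is precisely the intended proof. No issues.
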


The set
\[ k \langle\! \langle J \rangle\! \rangle_- \assign \{ P \in k \langle\! \langle J
   \rangle\! \rangle \suchthat \emptyset \nin \tmop{supp} P \} = \{P - P
   (\emptyset) \suchthat P \in k \langle\! \langle J \rangle\! \rangle
   \} \]
is a two-sided ideal in $k \langle\! \langle J \rangle\! \rangle$.

Given $\theta \in J^{\star}$, we write $X_{\theta}$ for the map $\mathbbm{1}_{\{\theta\}} \colon J^{\star}
\longrightarrow k$ with support $\{ \theta \}$ and $X_{\theta} (\theta) = 1$.
So $X_{\emptyset} = 1$, and writing $\theta = (\theta_1, \ldots, \theta_n)$,
we have
\[ X_{\theta} = X_{\theta_1} \cdots X_{\theta_n} . \]

We will write $k \langle\! \langle m \rangle\! \rangle \assign k \langle\! \langle J
\rangle\! \rangle$\label{autolab20} for $m \in \mathbb{N}$ and $J = \{ 0, \ldots, m - 1 \}$. Note that
$k \langle\! \langle 0 \rangle\! \rangle = k$ and that $k \langle\! \langle 1
\rangle\! \rangle$ is the commutative algebra of power series in one variable
$X_0$ and with coefficients in $k$.
\\\\ We now define a summability structure on $k \langle\! \langle J \rangle \!
\rangle$. Let $<$ be a well-ordering on $J$. We extend $<$ into the lexicographic ordering $<_l$ on $J^{\star}$, and note that $(J^{\star},<_l)$ is an ordered monoid under concatenation. Note furthermore that $(J^{\star},<_l)$ is well-ordered, so each subset of $J^{\star}$ is Noetherian. This means that $k \langle\! \langle J \rangle \!
\rangle=k(\! \! (J^{\star})\! \! )$ is an algebra of Noetherian series. By \Cref{prop-Noeth-salgebra}, it is an ultrafinite summability algebra for the structure of \Cref{prop-gloub-spaces}. Note that each $P \in k \langle\! \langle J \rangle\! \rangle$ is the sum $P = \sum_{\theta \in J^{\star}} P (\theta) X_{\theta}$.

    \subsection{Evaluating formal power series in summability algebras}\label{subsection-evaluations}

Given a set $J$, the summability algebra $k \langle\! \langle J \rangle\! \rangle$ extends the completion $k \left\langle J \right\rangle$ of the free associative algebra on $J$. It is well-known \cite[Chapter~0, p 17--18]{reutenauer} that elements of $k \langle J \rangle$ can be evaluated at tuples of elements in its maximal ideal. This allows for the development of a formal Lie correspondence for formal power series, based on the evaluation of the Taylor series of the exponential and logarithm.

It is very convenient to extend these results to more general summability algebras. Then one can see $k \langle\! \langle J \rangle\! \rangle$ as a universal and free summability algebra, acting by evaluation on summability algebras, so that universal identities (such as $\exp \circ \log = \tmop{id}$) that can be stated in summability algebras  could be proved once in $k \langle\! \langle J \rangle\! \rangle$ and then obtained in general by evaluating into summability algebras. 

\begin{definition}
  \label{def-closed}Let $(A,+,\cdot,\Sigma)$ be a unital ultrafinite
  summability algebra. Then we say that $(A,+,\cdot,\Sigma)$ 
  {\tmstrong{has evaluations}}{\index{evaluations}} if there exists a closed ideal $\mathfrak{m}$ of $A$ such 
  $A = k1_A + \mathfrak{m}$, and for all sets $J$ and all $\mathbf{f} \in
  \tmop{dom} \Sigma_J^{\mathfrak{m}}$, the family $( \mathbf{f} (\theta_1)
  \cdots \mathbf{f} (\theta_n))_{\theta = (\theta_1, \ldots, \theta_n) \in J^{\star}}$ is in $\tmop{dom} \Sigma_{J^{\star}}$.
\end{definition}

If $(A, \mathfrak{m}, \Sigma)$ are as in \Cref{def-closed}, then for all sets
$J$, all $\mathbf{f} \in \tmop{dom} \Sigma_J^{\mathfrak{m}}$ and all $P \in k \langle\! \langle J \rangle\! \rangle$, we define the
{\bf{evaluation}}{\index{evaluation}} of $P$ at $\mathbf{f}$ as\label{autolab23}
\[ \tmop{ev}_{\mathbf{f}} (P) \assign \sum_{\theta = (\theta_1, \ldots, \theta_n) \in
   J^{\star}} P (\theta) \mathbf{f} (\theta_1) \cdots \mathbf{f} (\theta_n) \in A. \]
If $J = \{0, \ldots, m - 1\}$ for an $m \in \mathbb{N}$, then we simply write
$\tmop{ev}_{\mathbf{f} (0), \ldots, \mathbf{f} (m - 1)} (P) \assign \tmop{ev}_{\mathbf{f}} (P)$.

\begin{remark}\label{rem-evaluations} Let us motivate \Cref{def-closed}.
Given a summability algebra $A$ and a family $\mathbf{f} \colon J \longrightarrow A$, the evaluation $\tmop{ev}_{\mathbf{f}}(P)$ of a formal power series $P \in k \langle\! \langle J \rangle\! \rangle$ at $\mathbf{f}$ ought to be the sum of the family $(P(j_1,...,j_n) \mathbf{f}(j_1) \cdots \mathbf{f}(j_n))_{(j_1,...,j_n) \in J^{\star}}$. Indeed, if all such families are summable, then $\tmop{ev}_{\mathbf{f}}$ will be the only strongly linear morphism of algebras between $k \langle\! \langle J \rangle\! \rangle$ and $A$ that sends $X_j$ to $\mathbf{f}(j)$ for each $j \in J$.

Taking $P = \sum \limits_{j \in J} X_j$, we see that this entails that $\mathbf{f}$ be summable. Furthermore, if $\mathbf{f}(j) \in k^{\times}$ for a certain $j \in J$, then for $P= \sum \limits_{k \in \mathbb{N}} (\mathbf{f}(j))^{-k} X^k_j$, we see that the  family $(P(j_1,...,j_n) \mathbf{f}(j_1) \cdots \mathbf{f}(j_n))_{(j_1,...,j_n) \in J^{\star}}$ has $(1)_{\theta \in \{j\}^{\star}}$ as a subfamily, hence is not summable. So if evaluations are to be defined in the case when $A=k \langle\! \langle J \rangle\! \rangle$, then $\mathbf{f}$ should range in the maximal ideal $k \langle\! \langle J \rangle\! \rangle_-$ of $k \langle\! \langle J \rangle\! \rangle$.

In general, writing $\mathfrak{m}$ for the set of elements $\tmop{ev}_{\mathbf{a}}(P) \in A$ where $J$ ranges among all sets, $P$ ranges among all elements in $k \langle\! \langle J \rangle\! \rangle_-$ and $\mathbf{f}$ ranges among summable families for which $(P(j_1,...,j_n) \mathbf{f}(j_1) \cdots \mathbf{f}(j_n))_{(j_1,...,j_n) \in J^{\star}}$ is summable, then $\mathfrak{m}$ is a two-sided ideal of the algebra $A'\assign k + \mathfrak{m}$.

Imposing furthermore that $\mathfrak{m}$ be a closed subspace of $A$, we might as well take $A=A'$ and thus work with summability algebras of the form $A=k+\mathfrak{m}$ for a two-sided ideal $\mathfrak{m}$ of $A$. We will see (\Cref{prop-closed-maximal-ideal}) that such a summaility algebra $k+\mathfrak{m}$ is local with maximal ideal $\mathfrak{m}$. \end{remark}

\begin{remark}\label{rem:subalgebraideal}
    Note that if $(A,\Sigma)$ is a unital summability algebra and $\mathfrak{m} \subseteq A$ is a subalgebra, then $k+\mathfrak{m}$ is a subalgebra of $A$ and $\mathfrak{m}$ is an ideal of $k+\mathfrak{m}$.
\end{remark}

Let $J$ be set. Let us check that $k \langle\! \langle J \rangle\! \rangle=k+k \langle\! \langle J \rangle\! \rangle_-$ itself has evaluations.

\begin{proposition}
  \label{prop-kJ-superclosed}The set $k \langle\! \langle J \rangle\! \rangle_-$
  is a closed ideal of $k \langle\! \langle J \rangle\! \rangle$, and $(k
  \langle\! \langle J \rangle\! \rangle, \Sigma)$ has evaluations.
\end{proposition}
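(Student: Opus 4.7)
The plan is to verify the three ingredients of Definition \ref{def-closed} for the candidate ideal $\mathfrak{m} := k \langle\! \langle J \rangle\! \rangle_-$: it is a closed two-sided ideal, it complements $k \cdot 1$ in $k \langle\! \langle J \rangle\! \rangle$, and summable families with values in $\mathfrak{m}$ produce the required ``word families'' that lie in $\tmop{dom}\Sigma_{J^{\star}}$.

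First I would dispense with the easy structural parts. The map $\tmop{ev}_{\emptyset}\colon k \langle\! \langle J \rangle\! \rangle \longrightarrow k,\ P \mapsto P(\emptyset)$ is a unital algebra morphism by \eqref{eq-def-prod} (since $\emptyset = \beta\gamma$ forces $\beta=\gamma=\emptyset$), so its kernel $\mathfrak{m}$ is a two-sided ideal and $P = P(\emptyset)\cdot 1 + (P - P(\emptyset)\cdot 1)$ shows $k\langle\! \langle J \rangle\! \rangle = k\cdot 1 + \mathfrak{m}$. For closedness, if $(P_i)_{i \in I}$ is a summable family in $\mathfrak{m}^I$, then by the definition of $\Sigma_I^{J^{\star}}$ the set $\{i : \emptyset \in \tmop{supp} P_i\}$ is finite and $(\Sigma_I P_i)(\emptyset) = \sum_{i \in I} P_i(\emptyset) = 0$, so the sum again lies in $\mathfrak{m}$.

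The main content is the evaluation condition. Fix a set $J$ and $\mathbf{f} \in \tmop{dom}\Sigma_J^{\mathfrak{m}}$; I must show that the family $\mathbf{F} := (\mathbf{f}(\theta_1)\cdots\mathbf{f}(\theta_n))_{\theta=(\theta_1,\ldots,\theta_n) \in J^{\star}}$ is summable in $k \langle\! \langle J \rangle\! \rangle$. Because $J^{\star}$ is well-ordered by the lexicographic order, every subset is Noetherian, so the Noetherian condition on $\bigcup_\theta \tmop{supp}\mathbf{F}(\theta)$ is automatic. Only the pointwise finiteness condition requires work: for each fixed $\beta \in J^{\star}$ of length $m$, the set $E_\beta := \{\theta \in J^{\star} : \beta \in \tmop{supp}(\mathbf{f}(\theta_1)\cdots\mathbf{f}(\theta_n))\}$ must be finite.

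Here I would invoke \Cref{lem-support-multiproduct}: any $\beta \in \tmop{supp}_m(\mathbf{f}(\theta_1)\cdots\mathbf{f}(\theta_n))$ admits a factorisation $\beta = \alpha_1\cdots\alpha_n$ with $\alpha_i \in \tmop{supp}_{m_i}\mathbf{f}(\theta_i)$ and $m_1+\cdots+m_n = m$. Since $\mathbf{f}(\theta_i) \in \mathfrak{m}$, every $\alpha_i$ is non-empty, which forces $m_i \geq 1$ and hence $n \leq m$. The number of factorisations of the fixed word $\beta$ into $n \leq m$ non-empty subwords is finite (bounded by $2^{m-1}$). For each such factorisation $(\alpha_1,\ldots,\alpha_n)$ and each $i$, summability of $\mathbf{f}$ applied at the index $\alpha_i \in J^{\star}$ ensures that $\{j \in J : \alpha_i \in \tmop{supp}\mathbf{f}(j)\}$ is finite. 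Therefore
\[
E_\beta \ \subseteq\  \bigcup_{\substack{n \leq m\\ \beta=\alpha_1\cdots\alpha_n\\ \alpha_i \neq \emptyset}}\ \prod_{i=1}^{n}\{j \in J : \alpha_i \in \tmop{supp}\mathbf{f}(j)\}
\]
is a finite union of finite products of finite sets, hence finite. This gives the second clause of \Cref{def:Noetheriansummability} for $\mathbf{F}$, completing the verification.

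The main obstacle is precisely the combinatorial step of the last paragraph: one must exploit the crucial fact that membership in $\mathfrak{m} = k\langle\! \langle J \rangle\! \rangle_-$ makes every factor land in $\bigcup_{p \geq 1} J^p$, which is what reduces an a priori infinite index set $J^{\star}$ to only the finitely many products of length $n \leq |\beta|$ and lets summability of $\mathbf{f}$ do the rest.
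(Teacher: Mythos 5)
Your proof is correct and follows essentially the same route as the paper's: closedness via evaluating at the empty word, and the key finiteness step via \Cref{lem-support-multiproduct} combined with the observation that membership in $k \langle\! \langle J \rangle\! \rangle_-$ forces every factor of a fixed length-$m$ word to be non-empty, which bounds the number of factors by $m$ and reduces everything to finitely many factorisations with finite index sets. The only (harmless) difference is notational: you reuse the letter $J$ for both the alphabet of $k \langle\! \langle J \rangle\! \rangle$ and the index set of the summable family $\mathbf{f}$, whereas the paper writes $I$ for the latter; the argument is unaffected.
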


\begin{proof}
  Let $I$ be a set. Let $\mathbf{Q}\colon I \longrightarrow k \langle\! \langle J \rangle \!
\rangle_0$ be summable. We have $\left( \sum_{i \in I}
  \mathbf{Q}(i) \right) (\emptyset) = \sum_{i \in \emptyset} \mathbf{Q}(i) (\emptyset) = 0$,
  so $\sum_{i \in I} \mathbf{Q}(i) \in k \langle\! \langle J \rangle\! \rangle_-$, which is
  thus a closed ideal.
  
  We next want to show that the family $( \mathbf{Q}(i_1) \cdots
  \mathbf{Q}(i_n))_{(i_1, \ldots, i_n) \in I^{\star}}$ is summable. Let $m \in \mathbb{N}$ and $\theta = (\theta_1, \ldots, \theta_m) \in J^m$.
  Write $I^{\star}_{\theta} \assign \{ i \in I^n \suchthat n \in \mathbb{N} \wedge
    \theta \in \tmop{supp}_m \mathbf{Q}(i_1) \cdots \mathbf{Q}(i_n) \}$ and $I_{\beta} \assign \{ i \in I \suchthat \beta \in \tmop{supp} \mathbf{Q}(i) \}$
  for all $\beta \in J^{\star}$. By \Cref{lem-support-multiproduct}, we have
  \[ I^{\star}_{\theta} \subseteq \bigcup_{n \in \mathbb{N}} \bigcup_{\beta_1 \cdots
    \beta_n = \theta} I_{\beta_1} \times \cdots \times I_{\beta_n}.\]
 Since each $\mathbf{Q}(i)$ lies in $k \langle\! \langle J
  \rangle\! \rangle_0$, we have $\tmop{supp}_0 \mathbf{Q}(i) = \emptyset$ for all $i \in
  I$, so we have in fact
  \[ I^{\star}_{\theta} \subseteq \bigcup_{n \leqslant m} \bigcup_{\beta_1
     \cdots \beta_n = \theta} I_{\beta_1} \times \cdots \times I_{\beta_n} .
  \]
  Now since each set $X_n \assign \{ (\beta_1, \ldots, \beta_n) \in
  (J^{\star})^n \suchthat \beta_1 \cdots \beta_n = \theta \}$ is finite
  and each $I_{\beta}, \beta \in J^{\star}$ is finite,
  we deduce that $I^{\star}_{\theta}$ is finite. Therefore the family $(\mathbf{Q}(i_1)
  \cdots \mathbf{Q}(i_n))_{i = (i_1, \ldots, i_n) \in I^{\star}}$ is
  summable.
\end{proof}

\begin{theorem} \label{th-evaluations}
  Assume that $A=k+\mathfrak{m}$ has evaluations. For all sets $J$ and all $\mathbf{f} \in \tmop{dom}
  \Sigma_J^{\mathfrak{m}}$, the evaluation map $\tmop{ev}_{\mathbf{f}}: k \langle\! \langle J \rangle\! \rangle
  \longrightarrow A$ is a morphism of algebras which is strongly linear.
\end{theorem}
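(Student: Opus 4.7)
The plan is to verify, in order, that $\tmop{ev}_{\mathbf{f}}$ is well-defined, $k$-linear, strongly linear, multiplicative, and unital. Well-definedness is the starting point: by hypothesis, the family $(\mathbf{f}(\theta_1)\cdots \mathbf{f}(\theta_n))_{\theta \in J^{\star}}$ lies in $\tmop{dom}\Sigma_{J^{\star}}$. For each $\theta \in J^\star$, multiplying by the single scalar $P(\theta)$ is a special case of ultrafiniteness (take the finite domain $X_\theta$ to be a singleton with map $* \mapsto P(\theta)$), so by \hyperref[uss]{\textbf{UF}} in $(A,\Sigma)$ and \Cref{prop-union-sum}\eqref{rem-ultrafinite:1}, the family $(P(\theta)\mathbf{f}(\theta_1)\cdots\mathbf{f}(\theta_n))_{\theta\in J^{\star}}$ is summable, so $\tmop{ev}_{\mathbf{f}}(P)$ is defined. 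Plain $k$-linearity then follows immediately from the linearity of $\Sigma_{J^{\star}}$ and pointwise linearity of $P \mapsto P(\theta)$.

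For strong linearity, I would take a summable family $(P_i)_{i\in I}$ in $k\langle\!\langle J\rangle\!\rangle$ with sum $Q$ and establish summability of the bi-indexed family $(P_i(\theta)\mathbf{f}(\theta_1)\cdots\mathbf{f}(\theta_n))_{(i,\theta)\in I\times J^{\star}}$, then apply \hyperref[ss3]{\textbf{SS3}} two ways. Summability of this family comes from combining (a) the finiteness of $I_\theta := \{i\in I : P_i(\theta)\neq 0\}$ for each $\theta$ (which is part of the summability condition in $k\langle\!\langle J\rangle\!\rangle$) with (b) the summability of $(\mathbf{f}(\theta_1)\cdots\mathbf{f}(\theta_n))_{\theta}$: apply \hyperref[uss]{\textbf{UF}} with the finite maps $\theta \mapsto (P_i(\theta))_{i\in I_\theta}$, giving summability over the set $\{(\theta,i): i\in I_\theta\}$, then extend by zero to all of $J^{\star}\times I$ using \hyperref[ss1]{\textbf{SS1}} and \hyperref[ss4]{\textbf{SS4}}. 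Partitioning $I\times J^{\star}$ first over $I$ and then over $J^{\star}$ via \hyperref[ss3]{\textbf{SS3}} yields $\sum_i \tmop{ev}_{\mathbf{f}}(P_i)$ and $\tmop{ev}_{\mathbf{f}}(Q)$ respectively, proving these agree.

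For multiplicativity, given $P,Q \in k\langle\!\langle J\rangle\!\rangle$, the product formula \eqref{eq-def-prod} gives $(P\cdot Q)(\theta) = \sum_{\beta\gamma=\theta} P(\beta)Q(\gamma)$, a finite sum by \eqref{eq-set-finite}. Expanding $\tmop{ev}_{\mathbf{f}}(P\cdot Q)$ and using the concatenation identity $\mathbf{f}(\theta_1)\cdots \mathbf{f}(\theta_n) = \mathbf{f}(\beta_1)\cdots\mathbf{f}(\beta_m)\mathbf{f}(\gamma_1)\cdots\mathbf{f}(\gamma_p)$ whenever $\theta=\beta\gamma$, I can reindex the summation over $J^{\star}$ into a summation over $J^{\star}\times J^{\star}$ via the bijection implicit in concatenation, justified by \hyperref[ss2]{\textbf{SS2}} and \hyperref[ss3]{\textbf{SS3}}. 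On the other side, \hyperref[sa]{\textbf{SA}} applied to the two summable families $(P(\beta)\mathbf{f}(\beta_1)\cdots\mathbf{f}(\beta_m))_{\beta}$ and $(Q(\gamma)\mathbf{f}(\gamma_1)\cdots\mathbf{f}(\gamma_p))_{\gamma}$ gives the product $\tmop{ev}_{\mathbf{f}}(P)\cdot \tmop{ev}_{\mathbf{f}}(Q)$ as a sum over $J^{\star}\times J^{\star}$; centrality of scalars $P(\beta)Q(\gamma) \in k$ lets the two expressions be compared termwise. Unitality is immediate since $\tmop{ev}_{\mathbf{f}}(1) = \tmop{ev}_{\mathbf{f}}(X_\emptyset)$ is the empty product $1_A$.

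The main obstacle I expect is the bookkeeping in the strong linearity step: one must carefully invoke \hyperref[uss]{\textbf{UF}} with the right choice of finite index sets, then propagate summability from a partial index set to the full product $I\times J^{\star}$ using \hyperref[ss4]{\textbf{SS4}}. Once this is done, everything else reduces to applications of \hyperref[ss3]{\textbf{SS3}} and \hyperref[sa]{\textbf{SA}}, and the argument mirrors the standard verification that formal substitution is a ring homomorphism, transposed into the axiomatic summability framework.
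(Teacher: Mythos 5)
Your proposal is correct and follows essentially the same route as the paper's proof: linearity from linearity of $\Sigma_{J^{\star}}$, multiplicativity via \hyperref[sa]{\textbf{SA}} together with the finite-fibre reindexing along concatenation, and strong linearity by building the bi-indexed summable family from the finiteness of the sets $I_\theta$ via \hyperref[uss]{\textbf{UF}}, extending by zero with \hyperref[ss4]{\textbf{SS4}}, and applying \hyperref[ss3]{\textbf{SS3}} in two ways. Your variant of the \hyperref[uss]{\textbf{UF}} step (using the finite-domain maps $i \mapsto P_i(\theta)$ on $I_\theta$ directly, rather than the sets of nonzero coefficient values followed by a reindexing) is in fact slightly cleaner than the paper's.
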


\begin{proof}
  That $\tmop{ev}_{\mathbf{f}}$ is a linear map is a direct consequence of the fact that
  $\Sigma_{\{ (n, \theta) \suchthat n \in \mathbb{N} \wedge \theta \in J^n
  \}}^{\mathfrak{m}}$ is linear. For $P, Q \in k \langle \! \langle J \rangle \! \rangle$, \hyperref[sa]{\textbf{SA}} gives
  \begin{eqnarray*}
    \tmop{ev}_{\mathbf{f}} (P) \cdot \tmop{ev}_{\mathbf{f}} (Q) & = & \sum_{m, p \in \mathbb{N} \wedge \beta \in J^m \wedge \gamma \in
    J^p} P (\beta) Q (\gamma) \mathbf{f} (\beta_1) \cdots \mathbf{f} (\beta_m) \cdot \mathbf{f}
    (\gamma_1) \cdots \mathbf{f} (\gamma_p)\\
    & = & \sum_{n \in \mathbb{N} \wedge \theta \in J^n} (P \cdot Q) (\theta)
    \mathbf{f} (\theta_1) \cdots \mathbf{f} (\theta_n)\\
    & = & \tmop{ev}_{\mathbf{f}} (P \cdot Q) .
  \end{eqnarray*}
  So $\tmop{ev}_{\mathbf{f}}$ is a morphism of algebras.
  
  Let $\mathbf{P} \colon I \longrightarrow k \langle \! \langle J \rangle \! \rangle$ be summable and set $P \assign
  \sum_{i \in I}\mathbf{P}(i)$. Consider the element 
  \[ P' \assign \sum_{\theta \in J^{\star}} X_{\theta} \]
  of $k \langle \! \langle J \rangle \! \rangle$. Since $(A,+,\cdot,\Sigma)$ has evaluations, the family 
  $(P' (\theta) \mathbf{f} (\theta_1) \cdots \linebreak \mathbf{f} (\theta_n))_{\theta = (\theta_1,
  \ldots, \theta_n) \in J^{\star}}$ is summable. Consider the family
  of finite subsets of $k$
  \[ (C_{(n, \theta)})_{\theta = (\theta_1, \ldots, \theta_n) \in J^{\star}} =
     (\{ \mathbf{P}(i) (\theta) \suchthat \mathbf{P}(i)(\theta) \neq 0\} \})_{\theta = (\theta_1,
     \ldots, \theta_n) \in J^{\star}} . \]
  Recall that $(A, \Sigma)$ is ultrafinite, so $(cP' (\theta) \mathbf{f} (\theta_1)
  \cdots \mathbf{f} (\theta_n))_{\theta = (\theta_1, \ldots, \theta_n) \in J^{\star}
  \wedge c \in C_{(n, \theta)}}$ is summable in $(A, \Sigma)$. By \hyperref[ss2]{\textbf{SS2}},
  so is the family \[\mathbf{g} \assign (\mathbf{P}(i) (\theta) \mathbf{f} (\theta_1) \cdots \mathbf{f} (\theta_n))_{\theta =
  (\theta_1, \ldots, \theta_n) \in J^{\star} \wedge i \in I_{\theta}}.\] 
  By \hyperref[ss4]{\textbf{SS4}}, the family
  \[ (\tmop{ev}_{\mathbf{f}} (\mathbf{P}(i)))_{i \in I} = (\mathbf{P}(i) (\theta) \mathbf{f} (\theta_1) \cdots \mathbf{f}
     (\theta_n))_{\theta = (\theta_1, \ldots, \theta_n) \in J^{\star} \wedge i
     \in I} \]
  is summable as the union of the families $\mathbf{g}$ and $(0)_{\theta = (\theta_1,
  \ldots, \theta_n) \in J^{\star} \wedge i \in I \setminus I_{\theta}}$. By
  by ultrafiniteness of $(A, \Sigma)$, \Cref{prop-union-sum} gives
  \[ \sum_{i \in I} \tmop{ev}_{\mathbf{f}} (\mathbf{P}(i)) = \sum_{n \in \mathbb{N} \wedge \theta=(\theta_1,...,\theta_n)
     } \left( \sum_{i \in I_{\theta}} \mathbf{P}(i) (\theta) \right) \mathbf{f} (\theta_1)
     \cdots \mathbf{f} (\theta_n) = \tmop{ev}_{\mathbf{f}} (P) . \]
  This shows that $\tmop{ev}_{\mathbf{f}}$ is strongly linear.
\end{proof}

The previous theorem allows us to derive identities in a sumability algebra $A=k+\mathfrak{m}$ with evaluations from universal identities in $k \langle\! \langle J \rangle\! \rangle$ which only involve finite products and infinite sums. We apply this in the next two results.

The next proposition extends this to identities involving composition of formal power series (i.e.\ evaluations of formal power series at formal power series). We will apply this to the exponential and logarithmic series in the next subsection.

\begin{proposition}
  \label{cor-evaluation-associativity}Let $A=k+\mathfrak{m}$ be a summability algebra that has evaluations. Let $I,J$ be sets. Consider two families
  $\mathbf{Q} \colon I
  \longrightarrow k \langle\! \langle J \rangle\! \rangle_-$ and $\mathbf{f} \colon J \longrightarrow \mathfrak{m}$ which are summable in their respective algebras. For $P \in k \langle\! \langle I
  \rangle\! \rangle$, we have
  \[ \tmop{ev}_{\mathbf{f}} (\tmop{ev}_{\mathbf{Q}} (P)) = \tmop{ev}_{(\tmop{ev}_{\mathbf{f}} (\mathbf{Q}(i)))_{i \in I}} (P) . \]
\end{proposition}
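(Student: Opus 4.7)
The plan is to directly unfold the two evaluations appearing on the left-hand side, using the strong linearity and algebra-morphism properties of $\tmop{ev}_{\mathbf{f}}$ (both established in \Cref{th-evaluations}), and recognise the result as the right-hand side.

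First I would verify that both sides are well-defined. The inner evaluation $\tmop{ev}_{\mathbf{Q}}(P)$ makes sense because, by \Cref{prop-kJ-superclosed}, $k\langle\!\langle J\rangle\!\rangle$ has evaluations and $\mathbf{Q}$ ranges in $k\langle\!\langle J\rangle\!\rangle_-$, and then $\tmop{ev}_{\mathbf{f}}$ applies to any element of $k\langle\!\langle J\rangle\!\rangle$. For the right-hand side, I need the family $(\tmop{ev}_{\mathbf{f}}(\mathbf{Q}(i)))_{i\in I}$ to lie in $\tmop{dom}\Sigma_I^{\mathfrak{m}}$. Membership in $\mathfrak{m}$ follows from $\mathbf{Q}(i)(\emptyset)=0$: every term in the expansion of $\tmop{ev}_{\mathbf{f}}(\mathbf{Q}(i))$ is a product $\mathbf{f}(\theta_1)\cdots\mathbf{f}(\theta_n)$ with $n\geq 1$, hence lies in the two-sided ideal $\mathfrak{m}$, and the sum stays in $\mathfrak{m}$ because $\mathfrak{m}$ is closed. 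Summability in $\mathfrak{m}$ is then immediate: $\mathbf{Q}$ is summable, $\tmop{ev}_{\mathbf{f}}$ is strongly linear by \Cref{th-evaluations}, so $(\tmop{ev}_{\mathbf{f}}\circ\mathbf{Q})$ is summable in $A$, and since its image lies in the closed subspace $\mathfrak{m}$, summability in $\mathfrak{m}$ follows from the definition of the restricted summability structure.

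With this in place, the main computation is short. By definition of $\tmop{ev}_{\mathbf{Q}}(P)$,
\[
\tmop{ev}_{\mathbf{Q}}(P) \;=\; \sum_{\theta=(i_1,\ldots,i_n)\in I^{\star}} P(\theta)\,\mathbf{Q}(i_1)\cdots\mathbf{Q}(i_n),
\]
and this family is summable in $k\langle\!\langle J\rangle\!\rangle$ precisely because $(k\langle\!\langle J\rangle\!\rangle,\Sigma)$ has evaluations. Applying the strongly linear map $\tmop{ev}_{\mathbf{f}}$ and interchanging it with the sum yields
\[
\tmop{ev}_{\mathbf{f}}(\tmop{ev}_{\mathbf{Q}}(P)) \;=\; \sum_{\theta\in I^{\star}} P(\theta)\,\tmop{ev}_{\mathbf{f}}\bigl(\mathbf{Q}(i_1)\cdots\mathbf{Q}(i_n)\bigr),
\]
and invoking the multiplicative property of the algebra morphism $\tmop{ev}_{\mathbf{f}}$ on each finite product gives
\[
\sum_{\theta=(i_1,\ldots,i_n)\in I^{\star}} P(\theta)\,\tmop{ev}_{\mathbf{f}}(\mathbf{Q}(i_1))\cdots\tmop{ev}_{\mathbf{f}}(\mathbf{Q}(i_n)),
\]
which is exactly $\tmop{ev}_{(\tmop{ev}_{\mathbf{f}}(\mathbf{Q}(i)))_{i\in I}}(P)$ by definition of the outer evaluation.

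The only subtlety is the interchange in the middle display, which must be justified by strong linearity of $\tmop{ev}_{\mathbf{f}}$ applied to a summable family; this is automatic from \Cref{th-evaluations} once we have checked that the family to which $\tmop{ev}_{\mathbf{f}}$ is applied is indeed summable — which it is, by definition of $\tmop{ev}_{\mathbf{Q}}(P)$. I do not anticipate any serious obstacle: the statement is essentially an associativity-of-evaluation result that follows formally from the two features of $\tmop{ev}_{\mathbf{f}}$ already packaged in \Cref{th-evaluations}.
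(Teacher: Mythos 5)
Your proof is correct and is essentially the paper's argument unpacked: the paper checks the identity on the generators $X_{i_0}$ and then invokes the fact that both sides are strongly linear algebra morphisms (the two properties of $\tmop{ev}_{\mathbf{f}}$ from \Cref{th-evaluations}), which is exactly the expand-interchange-distribute computation you write out. Your explicit verification that $(\tmop{ev}_{\mathbf{f}}(\mathbf{Q}(i)))_{i\in I}$ lies in $\tmop{dom}\Sigma_I^{\mathfrak{m}}$ is a well-definedness point the paper leaves implicit, and is a welcome addition.
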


\begin{proof}
  Let $i_0 \in I$. We have $\tmop{ev}_{\mathbf{Q}} (X_{i_0}) = \mathbf{Q}(i_0)$, so
  \[ \tmop{ev}_{\mathbf{f}} (\tmop{ev}_{\mathbf{Q}} (X_{i_0})) = \tmop{ev}_{\mathbf{f}} (\mathbf{Q}(i_0)) =
     \tmop{ev}_{(\tmop{ev}_{\mathbf{f}} (\mathbf{Q}(i)))_{i \in I}} (X_{i_0}) . \]
  Since $\tmop{ev}_{\mathbf{f}} \circ \tmop{ev}_{\mathbf{Q}}$ and $\tmop{ev}_{(\tmop{ev}_{\mathbf{f}} (\mathbf{Q}(i)))_{i \in I}}$ are strongly linear morphisms of algebras, we deduce
  that the identity holds for all~$P \in k
  \langle\! \langle I \rangle\! \rangle$.
\end{proof}

\begin{proposition}
  \label{prop-closed-maximal-ideal}Let $A = k +\mathfrak{m}$ have evaluations. Then $A$ is local algebra with maximal ideal $\mathfrak{m}$.
\end{proposition}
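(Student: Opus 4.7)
The plan is to verify the third characterisation of local unital rings recalled in the introduction: that the set $A \setminus U(A)$ of non-units forms an ideal. I will show $A \setminus U(A) = \mathfrak{m}$, which immediately yields the result.

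For the inclusion $A \setminus \mathfrak{m} \subseteq U(A)$, the key input is the geometric series. Given $\varepsilon \in \mathfrak{m}$, the singleton family $\mathbf{f} \colon \{\ast\} \to \mathfrak{m}$ with $\mathbf{f}(\ast) \assign \varepsilon$ lies in $\dom \Sigma_{\{\ast\}}^{\mathfrak{m}}$ by \hyperref[ss1]{\textbf{SS1}}, so the assumption that $A$ has evaluations (via the canonical bijection between $\{\ast\}^{\star}$ and $\mathbb{N}$) ensures that $(\varepsilon^n)_{n \in \mathbb{N}}$ is summable in $A$. Applying the strongly linear algebra morphism $\tmop{ev}_{\mathbf{f}} \colon k\langle\!\langle 1 \rangle\!\rangle \to A$ from \Cref{th-evaluations} to the identity
\[ (1 - X_0) \sum_{n \in \mathbb{N}} X_0^n = 1 = \Bigl( \sum_{n \in \mathbb{N}} X_0^n \Bigr)(1 - X_0) \]
valid in $k\langle\!\langle 1 \rangle\!\rangle$ then yields that $1_A - \varepsilon$ has two-sided inverse $\sum_{n \in \mathbb{N}} \varepsilon^n \in A$. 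Every element of $A \setminus \mathfrak{m}$ can thus be written as $c \cdot 1_A + \varepsilon = c\bigl(1_A - (-c^{-1}\varepsilon)\bigr)$ with $c \in k^{\times}$ and $\varepsilon \in \mathfrak{m}$, and is a product of two units.

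For the reverse inclusion $\mathfrak{m} \subseteq A \setminus U(A)$, since $\mathfrak{m}$ is an ideal, it suffices to show $1_A \notin \mathfrak{m}$. If instead $1_A \in \mathfrak{m}$, the same evaluation argument applied to $\mathbf{f}(\ast) \assign 1_A$ would make $(1_A)_{n \in \mathbb{N}}$ summable in $A$. Writing $S \assign \sum_{n \in \mathbb{N}} 1_A$, splitting $\mathbb{N} = \{0\} \sqcup \mathbb{N}^{>0}$ via \hyperref[ss3]{\textbf{SS3}} and reindexing $\mathbb{N}^{>0}$ back to $\mathbb{N}$ along $n \mapsto n+1$ via \hyperref[ss2]{\textbf{SS2}} gives $S = 1_A + S$, hence $1_A = 0$, contradicting the fact that $A$ is a nonzero unital $k$-algebra. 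Thus $\mathfrak{m}$ is proper, contains no unit, and $A \setminus U(A) = \mathfrak{m}$ is an ideal; so $A$ is local with maximal ideal $\mathfrak{m}$.

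The main technical input is the transfer of the classical geometric series identity from $k\langle\!\langle 1 \rangle\!\rangle$ to $A$ through \Cref{th-evaluations}; the only slightly delicate point is the brief reindexing argument establishing that $\mathfrak{m}$ is proper.
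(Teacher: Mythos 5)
Your proof is correct and follows essentially the same route as the paper: invert $1-\varepsilon$ (the paper uses $1+\varepsilon$) by evaluating the geometric series identity from $k\langle\!\langle 1\rangle\!\rangle$ via \Cref{th-evaluations}, deduce $k^{\times}+\mathfrak{m}\subseteq U(A)$, and conclude that $A\setminus U(A)=\mathfrak{m}$. The only difference is that you explicitly verify that $\mathfrak{m}$ is proper via the $S=1_A+S$ reindexing trick, a point the paper simply asserts; that is a harmless (and arguably welcome) addition.
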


\begin{proof} We first show that each element of $1 +\mathfrak{m}$ is invertible in $A$.
  Let $a = 1 + \varepsilon \in 1 +\mathfrak{m}$. Since $(A,+,\cdot,\Sigma)$ has evaluations, writing
  \[ P \assign 1 - X_0 + X_0^2 - X_0^3 + \cdots \in k \langle \! \langle 1 \rangle \! \rangle, \]
  we may consider the evaluation $\tmop{ev}_{\varepsilon} (P) \in A$. We have $P(1+X_0)=(1+X_0) P=1$ in $k \langle \! \langle 1 \rangle \! \rangle$, so \Cref{th-evaluations} gives $\tmop{ev}_{\varepsilon}(P) \cdot (1+\varepsilon)= (1+\varepsilon)\cdot \tmop{ev}_{\varepsilon}(P)=1$, i.e.\ $1 + \varepsilon$ is a unit in $A$ with inverse $\tmop{ev}_{\varepsilon}
  (P)$.

We deduce that $k^{\times} (1 + \mathfrak{m}) = k^{\times}+\mathfrak{m}$ is contained in $U(A)$. Since $\mathfrak{m}$ is a proper ideal, it follows that $A \setminus U(A) = \mathfrak{m}$. So $A \setminus U(A)$ is an ideal, whence $A$ is local with maximal ideal $\mathfrak{m}$.
\end{proof}

    Due to \Cref{prop-closed-maximal-ideal}, if $(A,+,\cdot,\Sigma)$ has evaluations (see  \Cref{def-closed}), then there is already a unique $\mathfrak{m}$ with $A=k1_A+\mathfrak{m}$, which is the unique maximal ideal of $A$.

\section{Derivations and endomorphisms}\label{section-closed}

\subsection{Strongly linear derivations}

  Let $(A,+,\cdot,\Sigma)$ be a summability algebra.

\begin{definition}
  A {\tmstrong{strongly linear derivation}}{\index{strongly linear derivation}} on $A$
  is a strongly linear map $\partial \colon A \longrightarrow A$ which
  satisfies the Leibniz product rule
  \[ \forall a, b \in A, \partial (a \cdot b) = \partial (a) \cdot b + a \cdot
     \partial (b) . \]
  We write $\tmop{Der}^+ (A)$\label{autolab16} for the set of strong
  derivations on $A$.
\end{definition}

It follows from \Cref{prop-lin-subalgebra} that $\left( \tmop{Der}^+ (A), +,
\left[ \hspace{1.2em} \right], . \right)$ is a Lie algebra.

\begin{proposition}
  The subspace $\tmop{Der}^+ (A)$ of $\tmop{Lin}^+ (A)$ is closed under $\Sigma^{\tmop{Lin}}$.
\end{proposition}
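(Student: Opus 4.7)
The plan is to show that if $\boldsymbol{\partial} \colon J \longrightarrow \tmop{Der}^+(A)$ is $\tmop{Lin}$-summable (viewed as a family in $\tmop{Lin}^+(A)$), then its sum $\partial \assign \sum_{j \in J} \boldsymbol{\partial}(j)$ lies in $\tmop{Der}^+(A)$. Strong linearity of $\partial$ is immediate from \Cref{lem-end-sum-stronglin}, so the only thing to verify is the Leibniz rule on an arbitrary pair $a,b \in A$.

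Fix $a,b \in A$. First I would unpack Lin-summability of $\boldsymbol{\partial}$ applied to the one-element family $(a)$ (respectively $(b)$), which gives that $(\boldsymbol{\partial}(j)(a))_{j\in J}$ and $(\boldsymbol{\partial}(j)(b))_{j \in J}$ are summable in $(A,\Sigma)$ with sums $\partial(a)$ and $\partial(b)$ respectively. Next I would invoke \Cref{prop-strong-products}: the right-multiplication $\,\cdot\,b$ and left-multiplication $a\,\cdot\,$ maps are strongly linear, so $(\boldsymbol{\partial}(j)(a)\cdot b)_{j\in J}$ and $(a\cdot \boldsymbol{\partial}(j)(b))_{j\in J}$ are summable with sums $\partial(a)\cdot b$ and $a\cdot \partial(b)$, respectively. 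By linearity of $\Sigma_J$ (and \hyperref[ss4]{\textbf{SS4}}/\hyperref[ss1]{\textbf{SS1}} used in \Cref{prop-union-sum} if one prefers to argue directly), the family $\bigl(\boldsymbol{\partial}(j)(a)\cdot b + a\cdot \boldsymbol{\partial}(j)(b)\bigr)_{j\in J}$ is then summable with sum $\partial(a)\cdot b + a\cdot \partial(b)$.

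Finally, since each $\boldsymbol{\partial}(j)$ satisfies the Leibniz rule, one has the termwise identity
\[
\boldsymbol{\partial}(j)(a\cdot b) \;=\; \boldsymbol{\partial}(j)(a)\cdot b + a\cdot \boldsymbol{\partial}(j)(b)
\]
for every $j \in J$. Summing this identity over $J$ and using the definition $\partial(a\cdot b) = \Sigma_J(\boldsymbol{\partial}(j)(a\cdot b))_{j\in J}$ together with the two sums computed above yields $\partial(a\cdot b) = \partial(a)\cdot b + a\cdot \partial(b)$, which is the desired Leibniz rule.

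There is no real obstacle here; the proof is essentially a bookkeeping exercise, and the only point requiring any care is making sure each auxiliary family is genuinely summable before one commutes $\Sigma_J$ with the product — this is precisely what \Cref{prop-strong-products} and the definition of $\tmop{Lin}$-summability are designed to supply.
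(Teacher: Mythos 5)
Your proof is correct and follows essentially the same route as the paper: apply the Leibniz rule termwise, split the sum using linearity of $\Sigma_J$, and pull the fixed factors $a$ and $b$ out of the sums. The only cosmetic difference is that you invoke \Cref{prop-strong-products} for that last step where the paper cites \hyperref[sa]{\textbf{SA}} directly, but the former is just a restatement of the latter.
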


\begin{proof}
  Let $I$ be a set, let $(\partial_i)_{i \in I}$ be a family in $\tmop{Der}^+ (A)$ which is
  $\tmop{Lin}$-summable, and write $\partial \assign \sum_{i \in I} \partial_i$. Let
  $a, b \in A$. We have
  \begin{align*}
    \partial (a \cdot b) & = \sum_{i \in I} \partial_i (a \cdot b)&&\\
    & = \sum_{i \in I} (\partial_i (a) \cdot b + a \cdot \partial_i (b))&&\\
    & = \sum_{i \in I} \partial_i (a) \cdot b + \sum_{i \in I} a \cdot
    \partial_i (b)&&\\
    & = \left( \sum_{i \in I} \partial_i (a) \right) \cdot b + a \cdot
    \left( \sum_{i \in I} \partial_i (b) \right) && \text{(\hyperref[sa]{\textbf{SA}})}\\
    & = \partial (a) \cdot b + a \cdot \partial (b). &&
  \end{align*}
  So $\partial \in \tmop{Der}^+ (A)$.
\end{proof}

\subsection{Exponential and logarithm}\label{section-exp-log}

In the sequel of \Cref{section-closed}, we assume that $k$ has characteristic
zero. It is known \cite[Chapter 4, Section 7]{Serre:Lie} that given a finite set $J$, the algebra $k \langle \! \langle J \rangle \! \rangle$  is equipped with an exponential $\exp \colon k \langle \! \langle J \rangle \! \rangle_0 \longrightarrow 1+k \langle \! \langle J \rangle \! \rangle_0$ and a logarithm $\log \colon 1+k \langle \! \langle J \rangle \! \rangle_0 \longrightarrow k \langle \! \langle J \rangle \! \rangle_0$, which are inverses of one another, and are given by evaluating the usual formal series. Using our previous results, we will recover a number of known identities known in the case of formal power series for all summability algebras with evaluations.

\begin{definition}\label{def:explog}
    Let $E_0,L_0\in k \langle \! \langle 1 \rangle \! \rangle$ be defined as
follows:
\[ E_0 \assign \sum_{n \geqslant 0} \frac{1}{n!} X_0^n
   \text{ and }L_0 \assign \sum_{n \geqslant 1}
   \frac{(- 1)^{n + 1}}{n} X_0^n  \]
    Let $(A,+,\cdot,\Sigma)$ be a unital ultrafinite summability algebra that has evaluations, and let $\mathfrak{m}$ be its maximal ideal. We define two maps
\begin{eqnarray*}
  \exp\colon \mathfrak{m} & \longrightarrow & 1 +\mathfrak{m}\\
  \delta & \longmapsto & \tmop{ev}_{\delta} (E_0)
\end{eqnarray*}
and
\begin{eqnarray*}
  \log\colon 1 +\mathfrak{m} & \longrightarrow & \mathfrak{m}\\
  \delta & \longmapsto & \tmop{ev}_{\delta - 1} (L_0) .
\end{eqnarray*}
\end{definition}
\begin{proposition}
  {\tmem{{\cite[Section~1.7, Theorem~7.2]{Serre:Lie}}}} We have
  \[\exp(L_0)-1 = \log(E_0) = X_0.\]
\end{proposition}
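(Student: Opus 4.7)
The plan is to reduce the assertion to a classical computation in $k[[X_0]]$. First, note that by \Cref{prop-kJ-superclosed}, the algebra $k\langle\!\langle 1\rangle\!\rangle = k[[X_0]]$ is a unital ultrafinite summability algebra with evaluations, with maximal ideal $k\langle\!\langle 1\rangle\!\rangle_-$. Since both $L_0$ and $E_0 - 1$ lie in $k\langle\!\langle 1\rangle\!\rangle_-$, \Cref{th-evaluations} guarantees that $\exp(L_0) = \tmop{ev}_{L_0}(E_0)$ and $\log(E_0) = \tmop{ev}_{E_0 - 1}(L_0)$ are well-defined elements of $k\langle\!\langle 1\rangle\!\rangle$. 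Hence the statement is a genuine identity between formal power series in one variable.

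To prove $\log(E_0) = X_0$, the cleanest route is via the formal derivative $\partial\colon k[[X_0]] \longrightarrow k[[X_0]]$, which is a strongly linear derivation. One checks termwise that $\partial E_0 = E_0$ and $\partial L_0 = \sum_{n \geq 0} (-1)^n X_0^n$, the latter satisfying $(1 + X_0)\,\partial L_0 = 1$. A formal chain rule of the shape $\partial(\tmop{ev}_Q(P)) = \partial Q \cdot \tmop{ev}_Q(\partial P)$ for $Q \in k[[X_0]]_-$ then yields $\partial \log(E_0) = \partial E_0 \cdot \tmop{ev}_{E_0 - 1}(\partial L_0) = E_0 \cdot E_0^{-1} = 1$. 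Since $\log(E_0)$ has constant term $0$ and derivative $1$, uniqueness of a power series with prescribed constant term and derivative forces $\log(E_0) = X_0$. The identity $\exp(L_0) = 1 + X_0$ then follows by applying the strongly linear algebra morphism $\tmop{ev}_{L_0}$ to both sides of $\log(E_0) = X_0$ and invoking the associativity of evaluations (\Cref{cor-evaluation-associativity}) to compute $\tmop{ev}_{L_0}(\log(E_0)) = \tmop{ev}_{L_0}(\tmop{ev}_{E_0 - 1}(L_0)) = \tmop{ev}_{\tmop{ev}_{L_0}(E_0) - 1}(L_0)$, which yields $\log(\exp(L_0)) = L_0$ combined with the injectivity of $\log$ on $1 + k\langle\!\langle 1\rangle\!\rangle_-$.

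The main obstacle is the formal chain rule, which is not directly provided by the framework developed so far: we have strong linearity, the Leibniz rule for strongly linear derivations, and associativity of evaluations, but the commutation of $\partial$ with substitution requires a separate argument. The cleanest way to establish it is to verify the rule on monomials $P = X_0^n$ by iterating the Leibniz rule on $Q^n$, and then to extend to arbitrary $P$ by strong linearity of $\partial$, of $\tmop{ev}_Q$, and of the map $P \mapsto \tmop{ev}_Q(\partial P)$. Alternatively, as the statement itself indicates, one may simply invoke the classical formal power series identity from \cite[Part~I, Section~IV.7, Theorem~7.2]{Serre:Lie}, which is proved by precisely such a derivative argument in the one-variable setting.
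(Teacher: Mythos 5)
The paper offers no proof of this proposition: it is recorded as a classical one-variable power series identity and attributed to Serre, exactly as you suggest in your closing sentence. Your derivative argument is in substance the proof Serre gives, transplanted into the paper's evaluation formalism, so you are supplying a proof where the authors chose a citation. The setup is right: $k \langle\! \langle 1 \rangle\! \rangle = k[\![X_0]\!]$ has evaluations by \Cref{prop-kJ-superclosed}, so both compositions are defined, and the computations $\partial E_0 = E_0$, $(1+X_0)\,\partial L_0 = 1$ are correct in characteristic $0$. The formal chain rule $\partial(\tmop{ev}_Q(P)) = \partial Q \cdot \tmop{ev}_Q(\partial P)$ is indeed not among the paper's lemmas, but your plan for it works: the formal derivative on $k[\![X_0]\!]$ is strongly linear for the summability structure of \Cref{subsection-kJJ} (it shifts supports down by one degree), the monomial case follows from Leibniz applied to $Q^n$, and the extension to general $P$ uses strong linearity of $\partial$, of $\tmop{ev}_Q$ (\Cref{th-evaluations}), and of left multiplication by $\partial Q$ (\Cref{prop-strong-products}). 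What each approach buys is clear: the citation keeps the paper short, while your argument makes the section self-contained and exercises the machinery (evaluations as strongly linear algebra morphisms, \Cref{cor-evaluation-associativity}) that the paper builds precisely for such manipulations.

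The one step you should not leave implicit is the injectivity of $\log$ on $1 + k \langle\! \langle 1 \rangle\! \rangle_-$, which you invoke to pass from $\log(\exp(L_0)) = L_0 = \log(1+X_0)$ to $\exp(L_0) = 1 + X_0$. It is true but needs its own short argument: if $\varepsilon \neq \delta$ in $k \langle\! \langle 1 \rangle\! \rangle_-$ and $n$ is the least degree at which they differ, then the degree-$n$ coefficients of $\log(1+\varepsilon)$ and $\log(1+\delta)$ differ, since the contributions of $\varepsilon^m/m$ for $m \geqslant 2$ to degree $n$ involve only coefficients of degree $< n$. Alternatively, you can bypass injectivity entirely by running your derivative argument a second time: the chain rule gives
\[ \partial \exp(L_0) = \partial L_0 \cdot \tmop{ev}_{L_0}(E_0) = (1+X_0)^{-1} \exp(L_0), \]
and $1 + X_0$ satisfies the same first-order relation with the same constant term, so induction on coefficients (using characteristic $0$) forces $\exp(L_0) = 1 + X_0$. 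With either repair the proof is complete.
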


\begin{corollary}
  \label{cor-exp-log-bij}Let $(A,+,\cdot,\Sigma)$ be a summability algebra with evaluations, with
  maximal ideal $\mathfrak{m}$. Then $\exp \colon \mathfrak{m} \longrightarrow 1
  +\mathfrak{m}$ and $\log \colon 1 +\mathfrak{m} \longrightarrow \mathfrak{m}$ are
  bijective, and are functional inverses of one another.
\end{corollary}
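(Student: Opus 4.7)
The plan is to deduce both composition identities from the universal identity $\exp(L_0) - 1 = \log(E_0) = X_0$ in $k\langle\!\langle 1 \rangle\!\rangle$ by transporting them into $A$ via evaluation maps, with Proposition~\ref{cor-evaluation-associativity} as the glue: all associativity/morphism properties are already packaged, and no fresh analytic arguments should be needed. Once both compositions equal the identity maps on the respective sets, mutual invertibility yields bijectivity automatically.

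First I would check $\log \circ \exp = \mathrm{id}_\mathfrak{m}$. Fix $\delta \in \mathfrak{m}$. Since $\tmop{ev}_\delta$ is an algebra morphism by \Cref{th-evaluations} and sends $1 \in k\langle\!\langle 1\rangle\!\rangle$ to $1_A$, we have $\exp(\delta) - 1 = \tmop{ev}_\delta(E_0 - 1)$, and note that $E_0 - 1 \in k\langle\!\langle 1\rangle\!\rangle_-$. Unwinding the definition of $\log$ and applying \Cref{cor-evaluation-associativity} (with $I,J$ singletons, $\mathbf{Q}(0) = E_0 - 1$, $\mathbf{f}(0) = \delta$, $P = L_0$), I obtain
\[
\log(\exp(\delta)) \;=\; \tmop{ev}_{\tmop{ev}_\delta(E_0 - 1)}(L_0) \;=\; \tmop{ev}_\delta\bigl(\tmop{ev}_{E_0-1}(L_0)\bigr) \;=\; \tmop{ev}_\delta(X_0) \;=\; \delta,
\]
using the universal identity $\log(E_0) = \tmop{ev}_{E_0-1}(L_0) = X_0$ from the preceding proposition in the middle step.

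The symmetric argument handles $\exp \circ \log = \mathrm{id}_{1+\mathfrak{m}}$. Fix $\delta \in 1 + \mathfrak{m}$, so $\delta - 1 \in \mathfrak{m}$ and $\log(\delta) = \tmop{ev}_{\delta - 1}(L_0)$, with $L_0 \in k\langle\!\langle 1\rangle\!\rangle_-$. Applying \Cref{cor-evaluation-associativity} with $\mathbf{Q}(0) = L_0$, $\mathbf{f}(0) = \delta - 1$, and $P = E_0$, I get
\[
\exp(\log(\delta)) \;=\; \tmop{ev}_{\tmop{ev}_{\delta-1}(L_0)}(E_0) \;=\; \tmop{ev}_{\delta-1}\bigl(\tmop{ev}_{L_0}(E_0)\bigr) \;=\; \tmop{ev}_{\delta-1}(1 + X_0) \;=\; \delta,
\]
where $\tmop{ev}_{L_0}(E_0) = 1 + X_0$ is exactly the other half of the universal identity. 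Bijectivity then follows: $\log$ is a left inverse of $\exp$ and $\exp$ is a left inverse of $\log$.

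The main point to verify carefully—really the only place one could slip—is the applicability of \Cref{cor-evaluation-associativity}: the hypotheses demand $\mathbf{Q}$ ranging in $k\langle\!\langle J\rangle\!\rangle_-$ and $\mathbf{f}$ ranging in $\mathfrak{m}$, together with summability of each. Both $E_0 - 1$ and $L_0$ sit in $k\langle\!\langle 1\rangle\!\rangle_-$, and $\delta$ and $\delta - 1$ lie in $\mathfrak{m}$ by hypothesis; summability is trivial since the index sets are singletons. I do not foresee any genuine obstacle beyond this bookkeeping, since the universal identity in $k\langle\!\langle 1\rangle\!\rangle$ is imported wholesale from \cite{Serre:Lie}.
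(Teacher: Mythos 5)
Your proof is correct and follows essentially the same route as the paper: both reduce the two composition identities to the universal identity $\exp(L_0)-1=\log(E_0)=X_0$ in $k\langle\!\langle 1\rangle\!\rangle$ and transport them into $A$ via \Cref{cor-evaluation-associativity}. If anything, your version is slightly more careful in instantiating the associativity proposition (the paper's displayed middle step writes $\tmop{ev}_{\varepsilon}(\tmop{ev}_{L_0}(E_0-1))$ where a literal application gives $\tmop{ev}_{\varepsilon}(\tmop{ev}_{E_0-1}(L_0))$, though both evaluate to $\tmop{ev}_{\varepsilon}(X_0)$), and you spell out the second identity that the paper leaves as ``similar.''
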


\begin{proof}
  This follows from \Cref{cor-evaluation-associativity}. For instance, for $\varepsilon \in \mathfrak{m}$, we have \[\log(\exp(\varepsilon))=\tmop{ev}_{\tmop{ev}_{\varepsilon}(E_0-1)}(L_0)=\tmop{ev}_{\varepsilon}(\tmop{ev}_{L_0} (E_0-1)) =\tmop{ev}_{\varepsilon}(X_0)=\varepsilon.\] The other identity follows similarly.
\end{proof}

Next consider the following elements of $k \langle \!\langle 2 \rangle\! \rangle$ for $n \in
\mathbb{N}$:
\begin{eqnarray}
  K_n & \assign & \sum_{\substack{m_1 + p_1, \cdots, m_i + p_i \geqslant 1 \\ m_1+p_1+\ldots + m_i+p_i=n}} \frac{1}{m_1
  !p_1 ! \cdots m_n !p_n !} X_0^{m_1} X_1^{p_1} \cdots X_0^{m_n} X_1^{p_n}
  \  \label{eq-BCH-term}\\
  X_0 \ast X_1 & \assign & \sum_{n > 0} \frac{(- 1)^{n + 1}}{n} K_n .
  \nonumber
\end{eqnarray}
We have the formal Baker--Campbell--Hausdorff Theorem:

\begin{proposition}
  \label{prop-BCH}{\tmem{{\cite[Chapter~4, Section~8]{Serre:Lie}}}} We have $\exp(X_0 \ast X_1) = \exp(X_0) \cdot \exp(X_1)$. Moreover $K_0 = X_0 + X_1$ and each
  $K_n, n > 0$ lies in the Lie subalgebra of $(k \langle \! \langle 2 \rangle \! \rangle, +, 0,
  [\cdot, \cdot])$ generated by commutators of $X_0$ and $X_1$.
\end{proposition}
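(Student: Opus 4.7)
The plan is to deduce both parts from the paper's evaluation machinery combined with classical facts about the free Lie algebra. By \Cref{prop-kJ-superclosed}, the algebra $k\langle\!\langle 2\rangle\!\rangle$ has evaluations with maximal ideal $k\langle\!\langle 2\rangle\!\rangle_-$, so the exponential and logarithm of \Cref{def:explog} are available and are mutually inverse by \Cref{cor-exp-log-bij}.

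For the identity $\exp(X_0 \ast X_1) = \exp(X_0) \cdot \exp(X_1)$, I would set $Y \assign \exp(X_0) \cdot \exp(X_1) - 1$, which lies in $k\langle\!\langle 2\rangle\!\rangle_-$, and compute $\log(1+Y) = \sum_{n \geqslant 1} \frac{(-1)^{n+1}}{n} Y^n$ from \Cref{def:explog}. Using \hyperref[sa]{\textbf{SA}}, one expands $Y = \sum_{m+p \geqslant 1} \frac{1}{m!p!} X_0^m X_1^p$ and identifies $Y^n$ with the sum defining $K_n$ in \eqref{eq-BCH-term}, thereby obtaining $\log(\exp(X_0) \cdot \exp(X_1)) = X_0 \ast X_1$; applying $\exp$ then yields the claimed equality. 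The identification of the low-degree terms (including the equality $X_0 + X_1 = K_1$, which forces the constraint $(m_1,p_1) \in \{(1,0),(0,1)\}$) follows immediately by inspection from the defining sum.

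The main obstacle is showing that each $K_n$ with $n > 0$ lies in the free Lie subalgebra generated by $X_0$ and $X_1$---this is Dynkin's theorem. My preferred route is via Friedrichs' criterion: endow $k\langle\!\langle 2\rangle\!\rangle$ with the completed coproduct $\Delta$ making $X_0$ and $X_1$ primitive, and use the classical fact that a homogeneous element of $k\langle\!\langle 2\rangle\!\rangle$ is a Lie polynomial in $X_0, X_1$ iff it is $\Delta$-primitive. Since $\Delta$ is an algebra morphism, each $\exp(X_i)$ is group-like, hence so is $\exp(X_0) \cdot \exp(X_1) = \exp(X_0 \ast X_1)$, and therefore $X_0 \ast X_1 = \log(\exp(X_0) \exp(X_1))$ is primitive. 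The homogeneous components $K_n$ inherit this property, and Friedrichs' criterion gives the conclusion. The foundational difficulty is that setting up $\Delta$ rigorously requires a completed tensor product compatible with the summability framework, which the paper does not develop; in practice I would therefore appeal directly to \cite[Chapter~4, Section~8]{Serre:Lie} once the identification of $X_0 \ast X_1$ with $\log(\exp(X_0)\exp(X_1))$ is in place, or alternatively invoke Dynkin's explicit formula, which writes $K_n$ directly as a sum of iterated commutators and so sidesteps the coproduct machinery entirely.
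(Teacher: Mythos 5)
The paper offers no proof of this proposition at all: it is stated purely as a citation of \cite[Chapter~4, Section~8]{Serre:Lie}, and the same is true of the companion statement $\exp(L_0)-1=\log(E_0)=X_0$. Your sketch is therefore not in conflict with anything in the paper; it is a correct reconstruction of the classical argument, and your honest fallback --- invoking Serre (or Dynkin's explicit formula) for the assertion that the homogeneous components are Lie elements, since the completed coproduct is not developed in the summability framework --- lands you exactly where the paper already stands. The first half of your argument (identifying $X_0\ast X_1$ with $\log(\exp(X_0)\cdot\exp(X_1))$ by expanding $Y=\exp(X_0)\exp(X_1)-1$ via \hyperref[sa]{\textbf{SA}} and matching against \eqref{eq-BCH-term}) is sound and is the kind of computation the paper's evaluation machinery is built to support, though strictly speaking \Cref{cor-exp-log-bij} as stated applies to summability algebras with evaluations, which $k\langle\!\langle 2\rangle\!\rangle$ is by \Cref{prop-kJ-superclosed}, so that step is fine. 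One small point worth flagging: you identify $X_0+X_1$ with $K_1$, whereas the proposition asserts $K_0=X_0+X_1$; given the displayed definition of $K_n$ (and the fact that $K_0$ does not even appear in the series $\sum_{n>0}\frac{(-1)^{n+1}}{n}K_n$), your indexing is the coherent one and the statement in the paper appears to contain a typo.
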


\begin{definition}\label{def:star}
    Let $(A,+,\cdot,\Sigma)$ be a unital ultrafinite summability algebra and let $\mathfrak{m}$ be a closed ideal of $A$. Suppose that $A=k 1_A+\mathfrak{m}$ and that $A$ has evaluations. For all $\delta_1$, $\delta_2$ in $\mathfrak{m}$, we define 
\begin{eqnarray}\delta_1 \ast \delta_2 & \assign & \tmop{ev}_{\delta_1,\delta_2}(X_0\ast X_1) \in \mathfrak{m}. \label{eq-BCH-prod}\end{eqnarray}
\end{definition}

As a consequence of \Cref{cor-evaluation-associativity}, we have:

\begin{corollary}
  \label{cor-BCH-operation}Let $(A,+,\cdot,\Sigma)$ be a summability algebra with evaluations, and let $\mathfrak{m}$ denote its maximal ideal. Then for all $\delta_1, \delta_2 \in
  \mathfrak{m}$, we have $\exp (\delta_1) \cdot \exp (\delta_2) = \exp
  (\delta_1 \ast \delta_2)$.
\end{corollary}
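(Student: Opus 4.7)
The plan is to apply the evaluation morphism $\tmop{ev}_{\delta_1,\delta_2}\colon k\langle\!\langle 2\rangle\!\rangle \longrightarrow A$ to the formal identity $\exp(X_0 \ast X_1) = \exp(X_0)\cdot \exp(X_1)$ of \Cref{prop-BCH}, and use that this evaluation is a strongly linear algebra morphism (\Cref{th-evaluations}) together with the composition formula for evaluations (\Cref{cor-evaluation-associativity}). In essence, all the hard work has already been done: the formal BCH theorem lives in the universal summability algebra $k\langle\!\langle 2\rangle\!\rangle$, and $\tmop{ev}_{\delta_1,\delta_2}$ transports it verbatim into $A$.

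First I would write $\exp(X_0 \ast X_1) = \tmop{ev}_{X_0\ast X_1}(E_0)$ inside $k\langle\!\langle 2\rangle\!\rangle$ (viewing $X_0\ast X_1 \in k\langle\!\langle 2\rangle\!\rangle_-$ as a summable element at which to evaluate the one-variable series $E_0$). Applying $\tmop{ev}_{\delta_1,\delta_2}$ and invoking \Cref{cor-evaluation-associativity}, the left-hand side becomes
\[
\tmop{ev}_{\delta_1,\delta_2}\bigl(\tmop{ev}_{X_0\ast X_1}(E_0)\bigr) = \tmop{ev}_{\tmop{ev}_{\delta_1,\delta_2}(X_0\ast X_1)}(E_0) = \tmop{ev}_{\delta_1 \ast \delta_2}(E_0) = \exp(\delta_1\ast \delta_2),
\]
where the second equality uses \Cref{def:star}. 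For the right-hand side, \Cref{th-evaluations} yields
\[
\tmop{ev}_{\delta_1,\delta_2}\bigl(\exp(X_0)\cdot \exp(X_1)\bigr) = \tmop{ev}_{\delta_1,\delta_2}(\exp(X_0))\cdot \tmop{ev}_{\delta_1,\delta_2}(\exp(X_1)),
\]
and each factor is computed by another application of \Cref{cor-evaluation-associativity}: since $\tmop{ev}_{\delta_1,\delta_2}(X_0) = \delta_1$ and $\tmop{ev}_{\delta_1,\delta_2}(X_1) = \delta_2$, we get $\tmop{ev}_{\delta_1,\delta_2}(\exp(X_j)) = \tmop{ev}_{\delta_1,\delta_2}(\tmop{ev}_{X_j}(E_0)) = \exp(\delta_j)$ for $j \in \{0,1\}$. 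Combining both sides proves the claim.

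The only real obstacle is purely bookkeeping: to invoke \Cref{cor-evaluation-associativity} one has to confirm that the families involved land in the correct domains, i.e.\ that $(\delta_1,\delta_2) \in \tmop{dom}\Sigma_{\{0,1\}}^{\mathfrak{m}}$ (immediate, since $\mathfrak{m}$ is closed and a pair is trivially summable via $\hyperref[ss4]{\textbf{SS4}}$ and $\hyperref[ss1]{\textbf{SS1}}$) and that $X_0\ast X_1$ and $\exp(X_0),\exp(X_1)$ lie in $k\langle\!\langle 2\rangle\!\rangle_-$ so that \Cref{cor-evaluation-associativity} applies with $\mathbf{Q}$ ranging in the maximal ideal of $k\langle\!\langle 2\rangle\!\rangle$. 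Both are clear from the definitions of $X_0\ast X_1$ in \eqref{eq-BCH-prod} and of $E_0$. No further computation is needed, since the identity $\exp(X_0\ast X_1) = \exp(X_0)\cdot\exp(X_1)$ is being quoted from \Cref{prop-BCH} rather than reproved.
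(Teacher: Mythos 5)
Your proof is correct and is exactly the argument the paper intends: the corollary is stated there as an immediate consequence of \Cref{cor-evaluation-associativity}, obtained by pushing the formal identity of \Cref{prop-BCH} through the strongly linear algebra morphism $\tmop{ev}_{\delta_1,\delta_2}$ and identifying both sides via \Cref{def:star} and \Cref{def:explog}. One immaterial slip in your final bookkeeping paragraph: $\exp(X_0)$ and $\exp(X_1)$ do \emph{not} lie in $k\langle\!\langle 2\rangle\!\rangle_-$ (they have constant term $1$), but this is harmless because \Cref{cor-evaluation-associativity} only requires the inner families $X_0\ast X_1$, $X_0$, $X_1$ to lie in the maximal ideal, while $\tmop{ev}_{\delta_1,\delta_2}$ itself is a morphism defined on all of $k\langle\!\langle 2\rangle\!\rangle$.
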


\subsection{A group isomorphism between derivations and automorphisms}

Let $(A,+,\cdot,\Sigma)$ be an ultrafinite, unital summability algebra. Let $\mathfrak{m} \subseteq \tmop{Lin}^+ (A)$ be a closed subalgebra such that $k \tmop{Id}_A + \ \mathfrak{m}$ has evaluations.

\begin{proposition}
  \label{prop-der-aut-correspondence}The exponential map $\exp\colon \tmop{Der}^+ (A) \cap \mathfrak{m} \longrightarrow \tmop{End}^+ (A) \cap
  \tmop{Id}_A +\mathfrak{m}, \ \partial \mapsto \exp(\partial)$ is a bijection with inverse $\log$.
\end{proposition}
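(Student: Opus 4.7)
The proof decomposes the statement into two inclusions. By Corollary~\ref{cor-exp-log-bij} applied to the summability algebra $k\tmop{Id}_A + \mathfrak{m}$, the maps $\exp\colon \mathfrak{m} \longrightarrow \tmop{Id}_A + \mathfrak{m}$ and $\log$ are already mutually inverse bijections. So it suffices to prove
\[
\exp(\tmop{Der}^+(A) \cap \mathfrak{m}) \subseteq \tmop{End}^+(A) \quad \text{and} \quad \log(\tmop{End}^+(A) \cap (\tmop{Id}_A + \mathfrak{m})) \subseteq \tmop{Der}^+(A).
\]
Since $k\tmop{Id}_A + \mathfrak{m} \subseteq \tmop{Lin}^+(A)$, strong linearity is automatic on both sides; only the multiplicativity of $\exp(\partial)$ and the Leibniz rule for $\log(\sigma)$ need to be checked.

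For the forward inclusion, fix $\partial \in \tmop{Der}^+(A) \cap \mathfrak{m}$. By an easy induction on $n$ using the Leibniz rule, one obtains the generalised identity $\partial^n(ab) = \sum_{k=0}^n \binom{n}{k}\partial^k(a)\cdot \partial^{n-k}(b)$ for all $a,b\in A$. Since $\exp(\partial)=\sum_n \frac{1}{n!}\partial^n$ is the sum in $\tmop{Lin}^+(A)$ of the $\tmop{Lin}$-summable family $(\partial^n/n!)_n$, applying it to $ab$ yields $\exp(\partial)(ab) = \sum_{n,k} \frac{1}{k!(n-k)!}\partial^k(a)\partial^{n-k}(b)$. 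A reindexing through \hyperref[ss3]{\textbf{SS3}} converts this into the double sum $\sum_{(i,j)\in\mathbb{N}^2} \frac{1}{i!\,j!}\partial^i(a)\cdot\partial^j(b)$, whose summability in $A$ is guaranteed by the $\tmop{Lin}$-summability of $(\partial^n/n!)_n$ together with \hyperref[sa]{\textbf{SA}}. A further application of \hyperref[sa]{\textbf{SA}} refactors the double sum as $\exp(\partial)(a)\cdot\exp(\partial)(b)$, proving that $\exp(\partial)$ is multiplicative, hence belongs to $\tmop{End}^+(A)$.

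For the reverse inclusion, fix $\sigma \in \tmop{End}^+(A) \cap (\tmop{Id}_A + \mathfrak{m})$ and set $\partial := \log(\sigma) \in \mathfrak{m}$. The key leverage is Corollary~\ref{cor-BCH-operation}: since $\partial$ commutes with itself, $\sigma^n = \exp(n\partial)$ for every $n\in\mathbb{N}$, and each $\sigma^n$ is still an algebra endomorphism by iterated composition. Applying the expansion from the forward direction to each identity $\exp(n\partial)(ab)=\exp(n\partial)(a)\cdot\exp(n\partial)(b)$ produces, for every $n\in\mathbb{N}$ and all $a,b\in A$, the $A$-valued equation
\[
\sum_{k\geq 0} \frac{n^k}{k!}\Bigl(\partial^k(ab) - \sum_{i+j=k}\binom{k}{i}\partial^i(a)\,\partial^j(b)\Bigr) = 0.
\]
The Leibniz rule for $\partial$ is precisely the coefficient of $n$ in this $n$-parametric family. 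Isolating that coefficient is the main obstacle: the classical finite Vandermonde argument does not apply to the infinite sums that appear here, and $A$ carries no intrinsic differentiation. The natural resolution is to transport the identity into a richer summability ambient---for instance, a formal-parameter extension of $k\tmop{Id}_A + \mathfrak{m}$ by a commuting variable $t$, built along the lines of the $k\langle\!\langle J \rangle\!\rangle$ construction of Section~\ref{subsection-kJJ}---in which the above becomes a bona fide formal power series in $t$ whose $A$-valued coefficients vanish individually. Reading off the $t^1$-coefficient then delivers $\partial(ab)=\partial(a)\,b + a\,\partial(b)$, so $\partial \in \tmop{Der}^+(A)$, and the bijection with inverse $\log$ follows.
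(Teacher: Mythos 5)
Your forward inclusion is essentially the paper's argument: the binomial Leibniz identity for $\partial^{[n]}(a\cdot b)$, combined with \textbf{SA} and \textbf{SS3} and the $\tmop{Lin}$-summability of $(\partial^{[n]}/n!)_n$, gives $\exp(\partial)(a\cdot b)=\exp(\partial)(a)\cdot\exp(\partial)(b)$, and \Cref{cor-exp-log-bij} handles bijectivity. The reverse inclusion, however, contains a genuine gap, and you have put your finger on it yourself. From the family of identities
\[ \sum_{l\geqslant 0}\frac{n^{l}}{l!}\Bigl(\partial^{[l]}(a\cdot b)-\sum_{i+j=l}\binom{l}{i}\,\partial^{[i]}(a)\cdot\partial^{[j]}(b)\Bigr)=0\qquad(n\in\mathbb{N}) \]
you must extract the coefficient of $n^{1}$, and nothing licenses that step. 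The proposed fix --- pass to a formal-parameter extension in which ``the $A$-valued coefficients vanish individually'' --- is not a proof but a restatement of what is missing: the identity $\exp(t\partial)(ab)=\exp(t\partial)(a)\cdot\exp(t\partial)(b)$ with $t$ a \emph{formal} variable is equivalent, already at order $t^{1}$, to the Leibniz rule you are trying to prove, and it does not follow from its specialisations at $t=n$. For a summable family $(c_l)_l$ in an abstract summability space, the implication ``$\sum_l n^{l}c_l=0$ for all $n\in\mathbb{N}$ implies $c_l=0$ for all $l$'' is an identity theorem with infinitely many unknowns; it does hold in spaces of Noetherian series, because each $p\in\Omega$ lies in the support of only finitely many $c_l$ and the finite Vandermonde argument applies pointwise, but the proposition is stated for an arbitrary ultrafinite summability algebra with evaluations, where no such pointwise finiteness is available and the axioms \textbf{SS1}--\textbf{SS4}, \textbf{UF}, \textbf{SA} do not yield it.

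The paper avoids the issue entirely by following Praagman \cite{Praa86}. Setting $\varepsilon\assign\tmop{Id}_A-\sigma$, multiplicativity of $\sigma$ gives $\varepsilon(ab)=\varepsilon(a)\cdot b+a\cdot\varepsilon(b)-\varepsilon(a)\cdot\varepsilon(b)$, whence by iteration $\varepsilon^{[n]}(ab)=\sum_{l,m}c_{n,l,m}\,\varepsilon^{[m]}(a)\cdot\varepsilon^{[l-m]}(b)$ with the \emph{same} rational coefficients as in $(z_1+z_2-z_1z_2)^{n}=\sum c_{n,l,m}z_1^{m}z_2^{l-m}$. The crucial point is the support bound $c_{n,l,m}=0$ for $n>l+m$, which makes each scalar $S_{l,m}=\sum_{n}c_{n,l,m}/n$ a \emph{finite} sum of rationals, computable from $\log(1-(z_1+z_2-z_1z_2))=\log(1-z_1)+\log(1-z_2)$ in $\mathbb{Q}[[z_1,z_2]]$; one then evaluates $\log(\sigma)(ab)$ by a legitimate rearrangement using \textbf{SS2} and \textbf{SS3} and reads off the Leibniz rule. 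To salvage your route you would have to either restrict to algebras where coefficient extraction is valid (e.g.\ Noetherian series) or replace the parameter $n$ by a finite combinatorial identity of this kind.
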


\begin{proof} Our proof is a slight adaptation of {\cite[Theorem~4]{Praa86}} to our formal
  context. Let $\partial \in \tmop{Der}^+ (A) \cap \mathfrak{m}$ and let $a, b
  \in A$. For $n \in \mathbb{N}$, an easy induction using the Leibniz product
  rule shows that

  \[ \partial^{[n]} (a \cdot b) = \sum_{i = 0}^n \binom{n}{i} \partial^{[i]} (a)
     \cdot \partial^{[n - i]} (b) . \]
  We have
  \begin{align*}
    \exp (\partial) (a) \cdot \exp (\partial) (b) & = \left( \sum_{m \in
    \mathbb{N}} \frac{1}{m!} \partial^{[m]} (a) \right) \cdot \left( \sum_{p
    \in \mathbb{N}} \frac{1}{p!} \partial^{[p]} (b) \right) &&\\
    & = \sum_{m, p \in \mathbb{N}} \frac{1}{m!p!} \partial^{[m]} (a) \cdot
    \partial^{[p]} (b) && \text{(\hyperref[sa]{\textbf{SA}})}\\
    & = \sum_{n \in \mathbb{N}} \sum_{m + p = n} \frac{1}{m!p!}
    \partial^{[m]} (a) \cdot \partial^{[p]} (b) && \text{(\hyperref[ss3]{\textbf{SS3}})}\\
    & = \sum_{n \in \mathbb{N}} \sum_{i = 0}^n \frac{1}{i! (n - i) !}
    \partial^{[i]} (a) \cdot \partial^{[n - i]} (b) && \\
    & = \sum_{n \in \mathbb{N}} \frac{\partial^{[n]} (a \cdot b)}{k!} &&\\
    & = \exp (\partial) (a \cdot b). &&
  \end{align*}
  So $\exp (\partial) \in \tmop{End}^+ (A)$. Conversely, let $\sigma \in
  \tmop{End}^+ (A) \cap \tmop{Id}_A +\mathfrak{m}$ and write $\varepsilon
  \assign \tmop{Id}_A - \sigma \in \mathfrak{m}$. Let $a, b \in A$. As in the
  proof of {\cite[Theorem~4]{Praa86}}, there is a family $(c_{k, l, n})_{k, l,
  n \in \mathbb{N}} \in \mathbb{Q}^{\mathbb{N}^3}$ such that for all $n > 0$,
  we have both
  \begin{eqnarray*}
    \varepsilon^{[n]} (a \cdot b) & = & \sum_{l = 0}^n \sum_{m = 0}^l c_{n, l,
    m} \varepsilon^{[m]} (a) \cdot \varepsilon^{[l - m]} (b) \text{\qquad in
    $A$, and}\\
    (z_1 + z_2 - z_1 z_2)^n & = & \sum_{l = 0}^n \sum_{m = 0}^l c_{n, l, m}
    z_1^m z_2^{l - m} \text{\qquad in $\mathbb{Q} [[z_1, z_2]]$} .
  \end{eqnarray*}
  Note that given $l, m \in \mathbb{N}$ we have
  \begin{equation}
    \forall n > l + m, c_{n, l, m} = 0 \label{eq-aux-cnlm} .
  \end{equation}
  So the sum $S_{l, m} \assign \sum_{n = 1}^{+ \infty} \frac{1}{n} c_{n, l,
  m}$ has finite support. We have
  \[ \log (1 - (z_1 + z_2 - z_1 z_2)) = \log (1 - z_1) + \log (1 - z_2) \]
  in $\mathbb{Q} [[z_1, z_2]]$. Identifying in the left and right hand terms
  the coefficients of $z_1^p z_2^q$ for $p \neq 0$ and $q \neq 0$, we deduce
  ithat $S_{l, m} = 0$ if $m \nin \{0, l\}$ or $l = 0$. Considering the
  coefficients of $z_1^l$ and $z_2^l$ for $l \geqslant 1$, we see that $S_{l,
  0} = S_{l, 1} = \frac{1}{l}$ otherwise. Now \
  \begin{align*}
    \log (\sigma) (a \cdot b) & = \sum_{n \geqslant 1} \frac{1}{n}  \sum_{l
    = 0}^n \sum_{m = 0}^l c_{n, l, m} \varepsilon^{[m]} (a) \cdot
    \varepsilon^{[l - m]} (b)&& \\
    & = \sum_{n \geqslant 1} \sum_{m \leqslant l \leqslant n}^n \frac{c_{n,
    l, m}}{n} \varepsilon^{[m]} (a) \cdot \varepsilon^{[l - m]} (b)&& \\
    & = \sum_{n \geqslant 1 \wedge m \leqslant l \leqslant n} \frac{c_{n,
    l, m}}{n} \varepsilon^{[m]} (a) \cdot \varepsilon^{[l - m]} (b)
    && \text{(\hyperref[ss3]{\textbf{SS3}})}\\
    & = \sum_{l \geqslant 0 \wedge m \leqslant l \wedge n \geqslant l
    \wedge n \geqslant 1} \frac{c_{n, l, m}}{n} \varepsilon^{[m]} (a) \cdot
    \varepsilon^{[l - m]} (b) && \text{(\hyperref[ss2]{\textbf{SS2}})}\\
    & = \sum_{l \geqslant 0} \sum_{n \geqslant 1 \wedge n \geqslant l
    \geqslant m} \frac{c_{n, l, m}}{n} \varepsilon^{[m]} (a) \cdot
    \varepsilon^{[l - m]} (b) && \text{(\hyperref[ss3]{\textbf{SS3}})}\\
    & = \sum_{l \geqslant 0} \sum_{m \leqslant l} \sum_{n \geqslant 1
    \wedge n \geqslant l \wedge n \leqslant l + m} \frac{c_{n, l, m}}{n}
    \varepsilon^{[m]} (a) \cdot \varepsilon^{[l - m]} (b)
    && \text{((\ref{eq-aux-cnlm}))}\\
    & = \sum_{l \geqslant 0} \sum_{m \leqslant l} S_{l, m}
    \varepsilon^{[m]} (a) \cdot \varepsilon^{[l - m]} (b)\\
    & = \sum_{l \geqslant 1} \frac{1}{l}  \sum_{m \in \{0, l\}}
    \varepsilon^{[m]} (a) \cdot \varepsilon^{[l - m]} (b)
    && \\
    & = \sum_{l \geqslant 1} \frac{1}{l}  (a \cdot \varepsilon^{[l]} (b) +
    \varepsilon^{[l]} (a) \cdot b)&&\\
    & = a \cdot \log (1 - \varepsilon) (b) + \log (1 - \varepsilon) (a)
    \cdot b&&\\
    & = a \cdot \log (\sigma) (b) + \log (\sigma) (a) \cdot b.&&
  \end{align*}
  Therefore $\log (\sigma)$ is a derivation.
\end{proof}

\begin{theorem}
  \label{th-der-aut-correspondence}Let $(A,+,\cdot,\Sigma)$ be an ultrafinite summability algebra. Suppose that $\mathfrak{m} \subseteq \tmop{Lin}^+ (A)$ is a closed subalgebra such that $k \tmop{Id}_A + \ \mathfrak{m} \subseteq \tmop{Lin}^+ (A)$
  %
  has evaluations. Consider the operation $\ast$ of \Cref{def:star} on $\mathfrak{m}$. Then the structures $(\tmop{Der}^+ (A) \cap
  \mathfrak{m}, \ast)$ and $(\tmop{End}^+ (A) \cap (\tmop{Id}_A +\mathfrak{m}),
  \circ)$ are groups, and the exponential map of
  \Cref{def:explog} restricts to a group isomorphism $\exp\colon\tmop{Der}^+ (A) \cap
  \mathfrak{m} \longrightarrow \tmop{End}^+ (A) \cap (\tmop{Id}_A +\mathfrak{m})$.
\end{theorem}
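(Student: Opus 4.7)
The plan is to deduce the theorem by combining three earlier results: the bijection $\exp\colon \tmop{Der}^+(A) \cap \mathfrak{m} \longrightarrow \tmop{End}^+(A) \cap (\tmop{Id}_A + \mathfrak{m})$ established in \Cref{prop-der-aut-correspondence}, the composition identity $\exp(\delta_1) \circ \exp(\delta_2) = \exp(\delta_1 \ast \delta_2)$ of \Cref{cor-BCH-operation}, and the locality statement of \Cref{prop-closed-maximal-ideal}. Once the right-hand group is in place, the rest is transport of structure.

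First I would verify that $G \assign \tmop{End}^+(A) \cap (\tmop{Id}_A + \mathfrak{m})$ is a group under $\circ$. For closure, given $\sigma_i = \tmop{Id}_A + \varepsilon_i$ with $\varepsilon_i \in \mathfrak{m}$, one expands
\[
\sigma_1 \circ \sigma_2 \;=\; \tmop{Id}_A + \varepsilon_1 + \varepsilon_2 + \varepsilon_1 \circ \varepsilon_2,
\]
and since $\mathfrak{m}$ is a subalgebra of $\tmop{Lin}^+(A)$ (closed under $\circ$), this element is again in $\tmop{Id}_A + \mathfrak{m}$; it lies in $\tmop{End}^+(A)$ because composition preserves both strong linearity (\Cref{prop-lin-subalgebra}) and the algebra-endomorphism property. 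For inverses, I would apply \Cref{prop-closed-maximal-ideal} to the unital summability algebra $k\tmop{Id}_A + \mathfrak{m}$ (which has evaluations by hypothesis): every $\sigma = \tmop{Id}_A + \varepsilon \in \tmop{Id}_A + \mathfrak{m}$ is a unit, and the Neumann-series construction from that proposition shows that the two-sided inverse has the form $\tmop{Id}_A + \varepsilon'$ with $\varepsilon' \in \mathfrak{m}$ (since $\mathfrak{m}$ is closed and the inverse series lies in $1 + k\langle\!\langle 1 \rangle\!\rangle_-$). Because this inverse is a two-sided inverse in $\tmop{Lin}^+(A)$, the map $\sigma$ is a bijection $A \longrightarrow A$, and the set-theoretic inverse of an algebra endomorphism is automatically an algebra endomorphism, so $\sigma^{-1} \in G$.

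Next I would invoke \Cref{prop-der-aut-correspondence} for the bijectivity of $\exp$ between $\tmop{Der}^+(A) \cap \mathfrak{m}$ and $G$, with inverse $\log$. Then \Cref{cor-BCH-operation}, applied to the summability algebra $k\tmop{Id}_A + \mathfrak{m}$ in which the product is composition, gives for all $\delta_1, \delta_2 \in \tmop{Der}^+(A) \cap \mathfrak{m}$ the identity $\exp(\delta_1 \ast \delta_2) = \exp(\delta_1) \circ \exp(\delta_2)$. In particular $\exp(\delta_1 \ast \delta_2) \in G$, and since $\exp$ is injective with image $G$, it follows that $\delta_1 \ast \delta_2 \in \tmop{Der}^+(A) \cap \mathfrak{m}$; hence $\ast$ restricts to a binary operation on this set. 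Transporting the group structure from $G$ along the bijection $\exp$ simultaneously establishes that $(\tmop{Der}^+(A) \cap \mathfrak{m}, \ast)$ is a group and that $\exp$ is a group isomorphism.

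The main obstacle I anticipate is the bookkeeping around the inverse in $G$: one must check that the multiplicative inverse produced in $k\tmop{Id}_A + \mathfrak{m}$ is genuinely an algebra automorphism of $A$ (not merely an element of the larger algebra $\tmop{Lin}^+(A)$), and one must be careful to distinguish the composition product in $k\tmop{Id}_A + \mathfrak{m}$ from the product of $A$ when invoking \Cref{cor-BCH-operation}. Everything else is formal once the hypotheses of the cited results have been matched to the setup.
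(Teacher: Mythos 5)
Your proposal is correct and follows essentially the same route as the paper: locality of $k\tmop{Id}_A+\mathfrak{m}$ (\Cref{prop-closed-maximal-ideal}) for inverses, \Cref{cor-BCH-operation} for the compatibility of $\ast$ with $\circ$, and \Cref{prop-der-aut-correspondence} for the bijection, then transport of structure. Your version merely spells out the closure and inverse bookkeeping that the paper dispatches in one line.
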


\begin{proof}
  By \Cref{prop-closed-maximal-ideal}, each $\sigma \in \tmop{Id}_A
  +\mathfrak{m}$ is invertible in $\tmop{Id}_A +\mathfrak{m}$, and its inverse
  is obviously a morphism of algebra of $A$. Therefore $\tmop{End}^+ (A) \cap
  \tmop{Id}_A +\mathfrak{m}$ is a group under composition. We conclude with
  \Cref{cor-BCH-operation} and \Cref{prop-der-aut-correspondence}.\end{proof}

It is not clear at first glance that there are (non-trivial) examples of applications of \Cref{th-der-aut-correspondence}. Those will be provided with \Cref{th-der-aut-correspondence-Hahn-field}.

\begin{corollary}\label{cor-divisible-torsion-free}
  The group $(\tmop{End}^+ (A) \cap \tmop{Id}_A +\mathfrak{m}, \circ)$ is
  divisible and torsion-free.
\end{corollary}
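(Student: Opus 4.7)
The plan is to transport the statement through the group isomorphism provided by \Cref{th-der-aut-correspondence} and reduce both divisibility and the absence of torsion to elementary linear algebra in the $k$-vector space $\tmop{Der}^+ (A) \cap \mathfrak{m}$, which makes sense because $k$ has characteristic $0$. First I would note that, by \Cref{th-der-aut-correspondence}, the exponential is a group isomorphism from $(\tmop{Der}^+ (A) \cap \mathfrak{m},\ast)$ onto $(\tmop{End}^+ (A) \cap (\tmop{Id}_A + \mathfrak{m}),\circ)$, so it suffices to prove the corresponding two properties for the source group.

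The central computation is the identity $\partial \ast \partial = 2\partial$ for every $\partial \in \tmop{Der}^+ (A) \cap \mathfrak{m}$. By \Cref{def:star}, $\partial \ast \partial = \tmop{ev}_{\partial,\partial}(X_0 \ast X_1)$, once one notes that the finite family $(\partial,\partial)$ is summable in $\mathfrak{m}$ by \hyperref[ss1]{\textbf{SS1}}. By \Cref{prop-BCH}, $X_0 \ast X_1$ decomposes as the linear term $X_0 + X_1$ plus higher-degree terms each lying in the Lie subalgebra of $k\langle\!\langle 2\rangle\!\rangle$ generated by iterated commutators of $X_0$ and $X_1$. Since $\tmop{ev}_{\partial,\partial}$ is a morphism of algebras (\Cref{th-evaluations}) mapping both $X_0$ and $X_1$ to $\partial$, it sends every such iterated commutator to an iterated commutator involving only $\partial$, which vanishes because $[\partial,\partial] = 0$. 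Hence $\partial \ast \partial = \partial + \partial = 2\partial$. Associativity of $\ast$ follows from the bijectivity of $\exp$ combined with \Cref{cor-BCH-operation}, so an easy induction on $n$ yields $\underbrace{\partial \ast \cdots \ast \partial}_{n} = n\partial$ for every $n \in \mathbb{N}^{>0}$.

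From this the two required properties are immediate. For divisibility, given $\partial$ and $n \geq 1$, since $\tmop{Der}^+ (A) \cap \mathfrak{m}$ is a $k$-subspace of $\tmop{Lin}^+ (A)$ and $k$ has characteristic $0$, the element $\tfrac{1}{n}\partial$ lies in $\tmop{Der}^+ (A) \cap \mathfrak{m}$ and satisfies $\underbrace{\tfrac{1}{n}\partial \ast \cdots \ast \tfrac{1}{n}\partial}_{n} = n \cdot \tfrac{1}{n}\partial = \partial$. For torsion-freeness, if $\underbrace{\partial \ast \cdots \ast \partial}_{n} = 0$ for some $n \geq 1$, then $n\partial = 0$, and since $n$ is invertible in $k$, we conclude $\partial = 0$.

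The only mild obstacle is the rigorous justification that all higher-order BCH terms vanish when $X_0$ and $X_1$ are both evaluated at $\partial$; this is exactly the content of the second part of \Cref{prop-BCH} together with the fact that $\tmop{ev}_{\partial,\partial}$ is a strongly linear algebra morphism.
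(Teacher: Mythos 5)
Your proposal is correct and follows essentially the same route as the paper: transport the problem through the isomorphism of \Cref{th-der-aut-correspondence} and observe that the $n$-fold $\ast$-iterate of $\partial$ equals $n\partial$ because all higher Baker--Campbell--Hausdorff terms are iterated commutators that vanish when both arguments are evaluated at (scalar multiples of) the same derivation. You actually spell out the commutator-vanishing justification more explicitly than the paper's proof does, which simply asserts the identity $a \ast \cdots \ast a = na$.
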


\begin{proof} Let $n \in \mathbb{N}^{>0}$ and $a \in \tmop{Der}^+ (A) \cap
  \mathfrak{m}$. Note that the $n$-fold iterate of $a$ in $(\tmop{Der}^+ (A),\ast)$ is $a \ast a \cdot \ast a = n a$. It follows since $k$ has haracteristic zero that $(\tmop{Der}^+ (A) \cap \mathfrak{m},\ast)$ is torsion-free. Furthermore, we see that the $n$-fold iterate of $\frac{1}{n} a$ in $(\tmop{Der}^+ (A),\ast)$ is $a$, whence $(\tmop{Der}^+ (A) \cap \mathfrak{m},\ast)$ is divisible. We conclude with \Cref{th-der-aut-correspondence}.
\end{proof}

\begin{proposition}
  \label{prop-divisibility}For all $\sigma \in \tmop{Id}_A +\mathfrak{m}$,
  writing $\mathcal{C} (\sigma) = \{\mu \in \tmop{End}^+ (A) \cap \tmop{Id}_A
  +\mathfrak{m} \suchthat \mu \circ \sigma = \sigma \circ \mu\}$, we have a
  group morphism
  \begin{eqnarray*}
    \mathord{\:^{[\cdot]}} \colon (k, +, 0) & \longrightarrow &
    (\mathcal{C}(\sigma), \circ, \tmop{Id}_A)\\
    c & \longmapsto & \sigma^{[c]} \assign \exp (c \log (\sigma)),
  \end{eqnarray*}
  with $\sigma^{[1]} = \sigma$ and $(\sigma^{[c]})^{[c']} = \sigma^{[cc']}$
  for all $c, c' \in k$. It is injective if $\sigma \neq \tmop{Id}_A$.
\end{proposition}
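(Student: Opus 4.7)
The plan is to leverage the correspondence of Theorem \ref{th-der-aut-correspondence}. Set $\partial := \log(\sigma)$, which by Theorem \ref{th-der-aut-correspondence} lies in $\tmop{Der}^+(A) \cap \mathfrak{m}$ and satisfies $\sigma = \exp(\partial)$. Since $\mathfrak{m}$ is a subspace, $c \partial \in \tmop{Der}^+(A) \cap \mathfrak{m}$ for every $c \in k$, and therefore $\sigma^{[c]} = \exp(c \partial) \in \tmop{End}^+(A) \cap (\tmop{Id}_A + \mathfrak{m})$ by Theorem \ref{th-der-aut-correspondence}. In particular, $\sigma^{[1]} = \exp(\log \sigma) = \sigma$ by \Cref{cor-exp-log-bij}, and $\sigma^{[0]} = \exp(0) = \tmop{Id}_A$.

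The key step is the additivity identity $\sigma^{[c+c']} = \sigma^{[c]} \circ \sigma^{[c']}$. Applying \Cref{cor-BCH-operation} in $\tmop{Lin}^+(A)$, we have $\sigma^{[c]} \circ \sigma^{[c']} = \exp\!\bigl((c\partial) \ast (c'\partial)\bigr)$, so everything reduces to showing that the formal BCH product of two commuting elements collapses to their sum. This is the substantive part of the proof: by \Cref{prop-BCH}, $X_0 \ast X_1 = X_0 + X_1$ plus higher-order terms lying in the Lie subalgebra of $(k \langle\!\langle 2 \rangle\!\rangle, [\cdot, \cdot])$ generated by brackets of $X_0$ and $X_1$. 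Since $[c\partial, c'\partial] = c c'[\partial, \partial] = 0$ in $\tmop{Lin}^+(A)$, evaluating these bracket polynomials at $(c\partial, c'\partial)$ kills every term beyond the linear one, leaving $(c\partial) \ast (c'\partial) = c\partial + c'\partial = (c+c')\partial$. Therefore $\sigma^{[c]} \circ \sigma^{[c']} = \exp((c+c')\partial) = \sigma^{[c+c']}$, so $c \mapsto \sigma^{[c]}$ is a morphism of groups.

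The remaining claims then fall out. The image lies in $\mathcal{C}(\sigma)$ because $\sigma^{[c]} \circ \sigma = \sigma^{[c]} \circ \sigma^{[1]} = \sigma^{[c+1]} = \sigma^{[1]} \circ \sigma^{[c]} = \sigma \circ \sigma^{[c]}$. The identity $(\sigma^{[c]})^{[c']} = \sigma^{[c c']}$ follows from $\log \sigma^{[c]} = c\partial$ (by bijectivity of $\exp$, \Cref{cor-exp-log-bij}), giving $(\sigma^{[c]})^{[c']} = \exp(c' \cdot c\partial) = \sigma^{[c c']}$. Finally, if $\sigma \neq \tmop{Id}_A$ then $\partial \neq 0$, and the kernel $\{c \in k \suchthat \exp(c\partial) = \tmop{Id}_A = \exp(0)\}$ collapses to $\{0\}$ by injectivity of $\exp$ together with $k$ being a field. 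The only real obstacle is the BCH collapse in the second paragraph; the rest is bookkeeping built on Theorem \ref{th-der-aut-correspondence} and \Cref{cor-exp-log-bij}.
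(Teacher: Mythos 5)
Your proposal is correct and takes essentially the same route as the paper's proof: both arguments hinge on the observation that all commutators of $c\partial$ and $c'\partial$ vanish, so by \Cref{prop-BCH} the BCH series $(c\partial)\ast(c'\partial)$ collapses to $(c+c')\partial$, and both then obtain membership in $\mathcal{C}(\sigma)$, the identity $(\sigma^{[c]})^{[c']}=\sigma^{[cc']}$, and injectivity from $\log\circ\exp=\tmop{Id}$ exactly as in the paper. No gaps.
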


\begin{proof}
  Write $\partial \assign \log (\sigma) \in \tmop{Der}_{\prec}^+ (\mathbb{A})$
  and let $c \in k$. Recall by \Cref{prop-BCH} that for $n \in \mathbb{N}^{>
  0}$, the terms $\tmop{ev}_{(\partial, c \partial)} (K_n)$ and $\tmop{ev}_{(c
  \partial, \partial)} (K_n)$, \ where $K_n$ is as in (\ref{eq-BCH-term}),
  lie in the Lie algebra generated by commutators in $\partial$ and $c
  \partial$. All such commutators are zero, so~$\tmop{ev}_{(\partial, c
  \partial)} (K_n) = \tmop{ev}_{(c \partial, \partial)} (K_n) = 0$. It follows
  since $\tmop{ev}_{(\partial, c \partial)} (K_0) = \partial + c \partial =
  \tmop{ev}_{(c \partial, \partial)} (K_0)$ that $(c \partial) \ast \partial = (c + 1)
  \partial = \partial \ast (c \partial)$, so $c \partial$ commutes with
  $\partial$, whence $\exp (c \partial) = \sigma^{[c]} \in \mathcal{C}
  (\sigma)$.
  
  For $c, c' \in k$, we have $\sigma^{[c + c']} = \exp (c \partial + c'
  \partial) = \exp ((c \partial) \ast (c' \partial))$ as above. So $\sigma^{[c
  + c']} = \exp (c \partial) \circ \exp (c' \partial) = \sigma^{[c]} \circ
  \sigma^{[c']}$. Thus $\mathord{\:^{[\cdot]}}$ is a group morphism. We also
  have $\sigma^{[cc']} = \exp (c' \log (\exp (c \log (\sigma)))) = \exp (cc'
  \log (\sigma)) = \sigma^{[cc']}$.
  
  Assume that $\sigma \neq \tmop{Id}_A$, so $\log (\sigma) \neq 0$. The kernel
  of the morphism is
  \[ \{c \in k \suchthat \exp (c \log (\sigma)) = \tmop{Id}_A \} = \{c \in k
     \suchthat c \log (\sigma) = 0\} = \{0\} . \]
  So this morphism is injective. 
\end{proof}

\section{Application to Noetherian series}\label{section-Noetherian-series}


    In this section, we show that \Cref{th-der-aut-correspondence} applies to certain summability algebras of strongly linear maps on spaces of Noetherian series, yielding our second main result \Cref{th-der-aut-correspondence-Hahn-field}. 
    We will give examples of ultrafinite summability algebras $(k \tmop{id}_{\mathbb{A}}+\mathfrak{m},+,\cdot,\Sigma)$ with evaluations, where $\mathbb{A}$ is a space of Noetherian series and $\mathfrak{m}$ is the space of contracting strongly linear maps on $\mathbb{A}$.
    From this we deduce our third and final main result \Cref{th-Lie}.
    \textbf{Throughout this section, let $(\Omega,<)$ be a non-empty partially ordered set.}
    
\subsection{Contracting linear maps}

    We now consider the space $(\mathbf{H}_{\Omega} k,\Sigma^{\Omega})$ of Noetherian series with exponents in $\Omega$ from \Cref{def:Noetheriansummability}. First, we introduce a partial ordering $\prec$ on $\mathbf{H}_{\Omega} k$ and the notion of contracting endomorphisms on $\mathbf{H}_{\Omega} k$.

    \begin{definition}
        Given $v, w \in \mathbf{H}_{\Omega} k$, we
        write\label{autolab26}
        \[ v \prec w \]
        if $w \neq 0$, and for any $p \in \tmop{supp} v$ there is $q \in \tmop{supp}
        w$ with $p > q$.
    \end{definition}

    \begin{proposition}
        \label{prop-prec-elem} The
        relation $\prec$ is a partial ordering on $\mathbf{H}_{\Omega} k$. Moreover, for any $u, v, w \in
        \mathbf{H}_{\Omega} k$, if $u \prec w$ and $v \prec w$, then $u + v \prec
        w$.   
    \end{proposition}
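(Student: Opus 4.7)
The proof splits into three verifications: irreflexivity of $\prec$, transitivity of $\prec$, and the additivity statement. The only non-trivial input is the following elementary fact about Noetherian posets, which I would record at the outset (either as a micro-lemma or as a remark invoking \Cref{prop-Noetherian-subsequence}): \emph{every non-empty Noetherian subset $S$ of $(\Omega,<)$ has a minimal element.} Indeed, if $S$ had no minimal element, dependent choice would produce a strictly decreasing sequence in $S$; but such a sequence admits no increasing subsequence, contradicting the characterisation of Noetherianity in \Cref{prop-Noetherian-subsequence}.

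With this fact in hand, irreflexivity is immediate. If $v=0$, then $v\not\prec v$ by the clause ``$w\neq 0$'' in the definition. If $v\neq 0$, then $\tmop{supp} v$ is a non-empty Noetherian subset of $\Omega$, so it admits a minimal element $p$; this $p$ witnesses the failure of $v\prec v$, since no $q\in\tmop{supp} v$ satisfies $q<p$. For transitivity, assume $u\prec v$ and $v\prec w$. In particular $w\neq 0$. Given $p\in\tmop{supp} u$, apply $u\prec v$ to obtain $q\in\tmop{supp} v$ with $p>q$, then apply $v\prec w$ to $q$ to obtain $r\in\tmop{supp} w$ with $q>r$, and conclude $p>r$ by transitivity of $<$ on $\Omega$. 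Hence $u\prec w$.

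For the additivity claim, assume $u\prec w$ and $v\prec w$; in particular $w\neq 0$. The key observation is that $\tmop{supp}(u+v)\subseteq \tmop{supp} u \cup \tmop{supp} v$, since if $(u+v)(p)\neq 0$ then at least one of $u(p),v(p)$ is non-zero. Any $p\in\tmop{supp}(u+v)$ therefore lies in $\tmop{supp} u$ or $\tmop{supp} v$, and the corresponding hypothesis provides $q\in\tmop{supp} w$ with $p>q$; thus $u+v\prec w$.

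There is essentially no obstacle here beyond having the minimal-element property of Noetherian subsets available; once that is recorded, each of the three verifications is a two-line unwinding of definitions.
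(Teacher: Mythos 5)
Your proposal is correct and follows essentially the same route as the paper's proof: minimal element of the support for irreflexivity, chaining witnesses for transitivity, and the support inclusion $\tmop{supp}(u+v)\subseteq\tmop{supp} u\cup\tmop{supp} v$ for additivity. The only difference is that you explicitly justify the existence of a minimal element in a non-empty Noetherian set (which the paper simply asserts), a harmless and arguably welcome addition.
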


    \begin{proof} Let $u, v, w \in
        \mathbf{H}_{\Omega} k $. We have $0 \not \prec 0$ by definition. If $u \neq 0$, then there is a minimal element $p \in \tmop{supp} u$. Thus, $u \not \prec u$.
        Now suppose that $u \prec v$ and $v \prec w$. Then for any $p \in \tmop{supp} u$ there are $q\in \tmop{supp} v$ and $r \in \tmop{supp} w$ with $p>q>r$. Thus, $p>r$ and $u \prec w$. Thus, $\prec$ is a partial ordering on $\mathbf{H}_{\Omega} k$. 
        Next suppose that $u\prec w$ and $v \prec w$. Since $\tmop{supp}(u+v) \subseteq \tmop{supp}(u)\cup \tmop{supp}(v)$, we immediately obtain $u+v \prec w$. 
    \end{proof}

    \begin{definition}
        \label{def-contracting}An endomorphism $\phi\in\mathrm{Lin}(\mathbf{H}_{\Omega} k)$ is {\bf{{\tmstrong{contracting}}}}{\index{contracting
        map}} if for any $v \in \mathbf{H}_{\Omega} k \setminus \{0\}$, we have $\phi(v) \prec v$.
        We write $\tmop{Lin}_{\prec} (\mathbf{H}_{\Omega} k)$\label{autolab27} (respectively
        $\tmop{Lin}_{\prec}^+ (\mathbf{H}_{\Omega} k)$\label{autolab28}) for the set of
        contracting linear (contracting strongly linear) endomorphisms on $\mathbf{H}_{\Omega} k$. 
    \end{definition}

    Note that
    \begin{equation}
        \tmop{Lin}_{\prec}^+ (\mathbf{H}_{\Omega} k) = \tmop{Lin}_{\prec} (\mathbf{H}_{\Omega} k) \cap
        \tmop{Lin}^+ (\mathbf{H}_{\Omega} k) . \label{eq-Lin-cont}
    \end{equation}

    \begin{lemma}
        \label{lem-Lin-cont-subalgebra}$\tmop{Lin}_{\prec} (\mathbf{H}_{\Omega} k)$ is a
        subalgebra of $(\tmop{Lin} (\mathbf{H}_{\Omega} k),+,\circ)$. 
    \end{lemma}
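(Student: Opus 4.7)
The plan is to verify, straight from the definitions, the three closure conditions that make a subset of $(\tmop{Lin}(\mathbf{H}_{\Omega} k), +, \circ)$ a subalgebra: closure under scalar multiples, under addition, and under composition (and containing $0$). All three will follow from unwinding \Cref{def-contracting} in terms of supports, together with the elementary facts about $\prec$ already assembled in \Cref{prop-prec-elem}.

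First I would observe that the zero endomorphism lies in $\tmop{Lin}_{\prec}(\mathbf{H}_{\Omega} k)$: for any $v \neq 0$ we have $\tmop{supp}(0) = \emptyset$, so the defining condition for $0 \prec v$ is vacuous. For scalar multiples, if $\phi \in \tmop{Lin}_{\prec}(\mathbf{H}_{\Omega} k)$ and $c \in k$, the case $c = 0$ reduces to the previous observation, and for $c \neq 0$ one has $\tmop{supp}(c\phi(v)) = \tmop{supp}(\phi(v))$, so $c\phi(v) \prec v$ whenever $\phi(v) \prec v$. For addition, let $\phi, \psi \in \tmop{Lin}_{\prec}(\mathbf{H}_{\Omega} k)$ and fix $v \neq 0$; since $\phi(v) \prec v$ and $\psi(v) \prec v$, the second part of \Cref{prop-prec-elem} yields $(\phi + \psi)(v) = \phi(v) + \psi(v) \prec v$.

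The remaining step is closure under composition. Let $\phi, \psi \in \tmop{Lin}_{\prec}(\mathbf{H}_{\Omega} k)$ and fix $v \neq 0$. If $\psi(v) = 0$, then $(\phi \circ \psi)(v) = 0$ and, as above, $0 \prec v$; otherwise $\psi(v) \neq 0$ so $\phi(\psi(v)) \prec \psi(v)$. Combining with $\psi(v) \prec v$ and the transitivity of $\prec$ (proved in \Cref{prop-prec-elem}) gives $(\phi \circ \psi)(v) \prec v$, once again reading through the trivial case $\phi(\psi(v)) = 0$.

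No step is really an obstacle here: the definition of $\prec$ was tailored precisely to satisfy the three properties (addition, transitivity, vacuity at $0$) that make contracting maps closed under the algebra operations, and all one has to check is that we never run into the hypothesis $w \neq 0$ on the right-hand side of $u \prec w$, which is guaranteed because we only ever test $v \neq 0$. The proof is a few lines long.
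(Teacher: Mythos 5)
Your proof is correct and follows essentially the same route as the paper: both arguments reduce everything to the support-based definition of $\prec$, invoke \Cref{prop-prec-elem} for closure under addition (and transitivity for composition), and handle the degenerate case $\psi(v)=0$ separately. The only cosmetic difference is that the paper treats addition and scalar multiplication in one stroke via $\phi + c\psi$ and the inclusion $\tmop{supp}(c\psi(v)) \subseteq \tmop{supp}(\psi(v))$, whereas you split them into separate cases.
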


    \begin{proof}
        Let $\phi, \psi \in \tmop{Lin}_{\prec} (\mathbf{H}_{\Omega} k)$, let $c \in k$ and let 
        $v \in \mathbf{H}_{\Omega} k \setminus \{ 0 \}$. Then $\tmop{supp} c
        \psi (v) \subseteq \tmop{supp} \psi (v)$, implying $c
        \psi (v) \prec v$. By \Cref{prop-prec-elem}, we obtain $\phi (v) + c
        \psi (v) \prec v$, whence $\phi + c \psi \in \tmop{Lin}_{\prec}
        (\mathbf{H}_{\Omega} k)$. If $\psi (v) = 0$, then $\phi (\psi (v)) = 0 \prec v$.
        Otherwise $\phi (\psi (v)) \prec \psi (v) \prec v$. By \Cref{prop-prec-elem}, we obtain $(\phi
        \circ \psi) (v) \prec v$. Thus, in both cases $\phi \circ \psi
        \in \tmop{Lin}_{\prec} (\mathbf{H}_{\Omega} k)$.
    \end{proof}

    \begin{lemma}
        \label{lem-contracting-criterion}
        An endomorphism $\phi\in \tmop{Lin}^+ (\mathbf{H}_{\Omega} k)$ is contracting if and only if
        $\tmop{supp} \phi (\mathbbm{1}_p) > p$ for any $p \in \Omega$.
    \end{lemma}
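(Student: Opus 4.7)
The plan is to exploit the fact that strong linearity together with the Noetherian summability structure on $\mathbf{H}_{\Omega} k$ reduces the contracting condition on arbitrary $v$ to the contracting condition on the indicator functions $\mathbbm{1}_p$. The forward implication is immediate: if $\phi$ is contracting, then since $\mathbbm{1}_p \neq 0$, one has $\phi(\mathbbm{1}_p) \prec \mathbbm{1}_p$, and unpacking the definition of $\prec$ (using that $\tmop{supp} \mathbbm{1}_p = \{p\}$) yields precisely $\tmop{supp} \phi(\mathbbm{1}_p) > p$.

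For the converse, I would start from the observation that for any $v \in \mathbf{H}_{\Omega} k$, the family $(v(p) \mathbbm{1}_p)_{p \in \Omega}$ is summable in $(\mathbf{H}_{\Omega} k,\Sigma^{\Omega})$ with sum $v$ (its support is $\tmop{supp} v$, which is Noetherian, and the finiteness-at-each-point condition is trivial since each $\mathbbm{1}_p$ has support $\{p\}$). Applying strong linearity of $\phi$,
\[
\phi(v) = \sum_{p \in \Omega} v(p)\, \phi(\mathbbm{1}_p) = \sum_{p \in \tmop{supp} v} v(p)\, \phi(\mathbbm{1}_p).
\]
By \eqref{eq:suppunion} this gives
\[
\tmop{supp} \phi(v) \subseteq \bigcup_{p \in \tmop{supp} v} \tmop{supp} \phi(\mathbbm{1}_p).
\]

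From here the argument is essentially a single line: assume $v \neq 0$ and pick any $q \in \tmop{supp} \phi(v)$. By the displayed inclusion there exists $p \in \tmop{supp} v$ with $q \in \tmop{supp} \phi(\mathbbm{1}_p)$, and the hypothesis $\tmop{supp} \phi(\mathbbm{1}_p) > p$ gives $q > p$. Since $p \in \tmop{supp} v$, this witnesses $\phi(v) \prec v$, as required by \Cref{def-contracting}.

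No step here is really an obstacle; the only thing to be slightly careful about is invoking strong linearity on the summable family $(v(p)\mathbbm{1}_p)_{p \in \Omega}$ rather than on a finite decomposition, which is what allows the whole of $\tmop{supp} v$ (possibly infinite) to be handled at once and makes the equivalence non-trivial compared to the merely linear setting.
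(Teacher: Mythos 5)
Your proof is correct and follows essentially the same route as the paper's: the forward direction by unpacking $\prec$ at $\mathbbm{1}_p$, and the converse by decomposing $v=\sum_{p}v(p)\mathbbm{1}_p$, applying strong linearity, and using the support inclusion \eqref{eq:suppunion} to trace each $q\in\tmop{supp}\phi(v)$ back to some $p\in\tmop{supp} v$ with $q>p$. The only difference is that you spell out the summability of $(v(p)\mathbbm{1}_p)_{p\in\Omega}$ and the role of strong linearity, which the paper leaves implicit.
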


    \begin{proof}
        Suppose that $\varphi$ is contracting and let $p\in \Omega$. Since $\varphi(\mathbbm{1}_p)\prec \mathbbm{1}_p$, we obtain for any $r\in \mathrm{supp}\,\varphi(\mathbbm{1}_p)$ that $r>p$, as required. Conversely, suppose that for any $p\in\Omega$ we have $\tmop{supp} \phi (\mathbbm{1}_p) > p$. Let $v = \sum_{q\in \Omega}v_q\mathbbm{1}_q \in \mathbf{H}_{\Omega} k\setminus\{0\}$.
        Let $r\in \tmop{supp},\varphi(v)$. Then by (\ref{eq:suppunion}) there is a $q \in \tmop{supp}\,v$ with $r \in \tmop{supp}\, \varphi(v_q \mathbbm{1}_q) = \tmop{supp}\, \varphi(\mathbbm{1}_q)$. So $r>q$ for some $q\in \mathrm{supp}\,v$. Hence, $\varphi(v)\prec v$, as required.
    \end{proof}


    \begin{corollary}\label{cor:linideal}
    $\tmop{Lin}_{\prec}^+ (\mathbf{H}_{\Omega} k)$ is a closed subalgebra of $(\tmop{Lin}^+
    (\mathbf{H}_{\Omega} k),+,\circ,\Sigma^{\mathrm{Lin}})$ and an ideal of $(k \tmop{Id}_{\mathbf{H}_{\Omega} k} +
        \tmop{Lin}_{\prec}^+ (\mathbf{H}_{\Omega} k),+,\circ,\Sigma^{\mathrm{Lin}})$.
    \end{corollary}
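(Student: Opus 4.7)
The plan is to verify three properties in turn: (a) that $\tmop{Lin}_{\prec}^+ (\mathbf{H}_{\Omega} k)$ is a subalgebra of $\tmop{Lin}^+ (\mathbf{H}_{\Omega} k)$; (b) that it is closed under $\Sigma^{\mathrm{Lin}}$; and (c) that it is a two-sided ideal of $k\tmop{Id}_{\mathbf{H}_{\Omega} k} + \tmop{Lin}_{\prec}^+ (\mathbf{H}_{\Omega} k)$.

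For (a), I would simply combine \Cref{lem-Lin-cont-subalgebra}, which shows that $\tmop{Lin}_{\prec} (\mathbf{H}_{\Omega} k)$ is a subalgebra of $(\tmop{Lin} (\mathbf{H}_{\Omega} k),+,\circ)$, with \Cref{prop-Lin+-subspace} and \Cref{prop-lin-subalgebra}, which together establish that $\tmop{Lin}^+ (\mathbf{H}_{\Omega} k)$ is a subalgebra of $(\tmop{Lin} (\mathbf{H}_{\Omega} k),+,\circ)$. In view of the identity (\ref{eq-Lin-cont}), the intersection is then a subalgebra of $\tmop{Lin}^+ (\mathbf{H}_{\Omega} k)$.

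The main step is (b), which I expect to be the only point with any real content. Suppose $(\phi_i)_{i \in I}$ is a $\tmop{Lin}$-summable family of elements of $\tmop{Lin}_{\prec}^+ (\mathbf{H}_{\Omega} k)$ and set $\phi \assign \sum_{i \in I} \phi_i \in \tmop{Lin}^+ (\mathbf{H}_{\Omega} k)$. By \Cref{lem-contracting-criterion}, it suffices to verify that $\tmop{supp}\, \phi (\mathbbm{1}_p) > p$ for every $p \in \Omega$. Unfolding the definition of $\Sigma^{\mathrm{Lin}}$, we have $\phi (\mathbbm{1}_p) = \Sigma^\Omega_I (\phi_i (\mathbbm{1}_p))_{i \in I}$, and by the support inclusion (\ref{eq:suppunion}) in the space of Noetherian series,
\[ \tmop{supp}\, \phi (\mathbbm{1}_p) \subseteq \bigcup_{i \in I} \tmop{supp}\, \phi_i (\mathbbm{1}_p). \]
Since each $\phi_i$ is contracting, \Cref{lem-contracting-criterion} gives $\tmop{supp}\, \phi_i (\mathbbm{1}_p) > p$ for every $i \in I$, whence the union is also $> p$. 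This yields $\phi \in \tmop{Lin}_{\prec}^+ (\mathbf{H}_{\Omega} k)$, proving closedness.

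For (c), an arbitrary element of $k\tmop{Id}_{\mathbf{H}_{\Omega} k} + \tmop{Lin}_{\prec}^+ (\mathbf{H}_{\Omega} k)$ has the form $c\tmop{Id}_{\mathbf{H}_{\Omega} k} + \mu$ with $c \in k$ and $\mu \in \tmop{Lin}_{\prec}^+ (\mathbf{H}_{\Omega} k)$. For any $\phi \in \tmop{Lin}_{\prec}^+ (\mathbf{H}_{\Omega} k)$,
\[ \phi \circ (c\tmop{Id}_{\mathbf{H}_{\Omega} k} + \mu) = c\phi + \phi \circ \mu, \qquad (c\tmop{Id}_{\mathbf{H}_{\Omega} k} + \mu) \circ \phi = c\phi + \mu \circ \phi, \]
and each of $c\phi$, $\phi \circ \mu$, $\mu \circ \phi$ lies in $\tmop{Lin}_{\prec}^+ (\mathbf{H}_{\Omega} k)$ by part (a) (using \Cref{lem-Lin-cont-subalgebra} for the compositions). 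Hence so do both sums, establishing the ideal property and completing the proof.
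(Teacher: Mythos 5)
Your proposal is correct and follows essentially the same route as the paper: closedness via \Cref{lem-contracting-criterion} together with the support inclusion (\ref{eq:suppunion}), and the subalgebra claim via (\ref{eq-Lin-cont}), \Cref{prop-lin-subalgebra} and \Cref{lem-Lin-cont-subalgebra}. The only cosmetic difference is that you verify the ideal property by direct computation where the paper simply invokes \Cref{rem:subalgebraideal}, which encapsulates the same calculation.
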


    \begin{proof}
        Let us show that  $\tmop{Lin}_{\prec}^+ (\mathbf{H}_{\Omega} k)$ is closed. Let $\boldsymbol{\phi} \colon J \longrightarrow \tmop{Lin}_{\prec}^+ (\mathbf{H}_{\Omega} k)$ be $\tmop{Lin}$-summable and set \[\sigma \assign \sum_{j
        \in J} \boldsymbol{\phi}(j) \in \tmop{Lin}^+ (\mathbf{H}_{\Omega} k).\] Let $p \in \Omega$. We have
        $\tmop{supp} \sigma (\mathbbm{1}_p) \subseteq \bigcup_{j \in J} \tmop{supp}
        \boldsymbol{\phi}(j) (\mathbbm{1}_p)$ 
        by (\ref{eq:suppunion}). \Cref{lem-contracting-criterion} gives $\tmop{supp}
    \boldsymbol{\phi}(j) (\mathbbm{1}_p) > p$ for each $j \in J$, so $\tmop{supp}             \boldsymbol{\sigma}
        (\mathbbm{1}_p) > p$, so $\sigma$ is contracting, again by
        \Cref{lem-contracting-criterion}. That $\tmop{Lin}_{\prec}^+ (\mathbf{H}_{\Omega} k)$ is a subalgebra follows from  (\ref{eq-Lin-cont}), 
        \Cref{prop-lin-subalgebra} and
        \Cref{lem-Lin-cont-subalgebra}. We conclude with \Cref{rem:subalgebraideal}.
    \end{proof}

\begin{lemma}
  \label{lem-Noeth-criterion} Let $I$
  be a set and let $\mathbf{f} \colon I \longrightarrow \mathbf{H}_{\Omega} k$
  be a map. Consider the set
  \[ N_{\mathbf{f}} \assign \{ (i, p) \in I \times \Omega \suchthat p \in \tmop{supp} \mathbf{f}
     (i) \}, \]
 partially ordered by $(i, p) <_{\mathbf{f}} (j, q) \Longleftrightarrow p < q$. Then $\mathbf{f}$ is
  summable if and only if $(N_{\mathbf{f}}, <_{\mathbf{f}})$ is Noetherian.
\end{lemma}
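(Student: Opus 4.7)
The plan is to prove the two implications separately by analyzing how the projection $\pi_2 \colon N_{\mathbf{f}} \longrightarrow \Omega$, $(i,p) \mapsto p$, interacts with chains and antichains. The key observation, which I would establish up front, is that by definition of $<_{\mathbf{f}}$, two distinct points $(i,p), (j,q) \in N_{\mathbf{f}}$ are incomparable if and only if either $p=q$ (so any two elements with the same second coordinate are incomparable) or $p,q$ are incomparable in $\Omega$. Consequently, in any chain $C \subseteq N_{\mathbf{f}}$ the restriction $\pi_2\!\!\upharpoonright_C$ is injective and order-preserving with image a chain in $\bigcup_{i \in I} \tmop{supp} \mathbf{f}(i)$.

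For the direction ($\Leftarrow$), assuming $(N_{\mathbf{f}},<_{\mathbf{f}})$ is Noetherian, I would verify the two conditions of \Cref{def:Noetheriansummability}. For any $p \in \Omega$, the fibre $\{(i,p) \suchthat p \in \tmop{supp} \mathbf{f}(i)\}$ is an antichain in $N_{\mathbf{f}}$ (as $p \not< p$), hence finite, giving the pointwise finiteness condition. For the Noetherianity of $S \assign \bigcup_{i \in I} \tmop{supp} \mathbf{f}(i)$, any decreasing chain $p_0 > p_1 > \cdots$ in $S$ lifts via choices $i_n \in I$ with $p_n \in \tmop{supp} \mathbf{f}(i_n)$ to a decreasing chain $((i_n,p_n))_n$ in $N_{\mathbf{f}}$; similarly any antichain in $S$ lifts to an antichain in $N_{\mathbf{f}}$. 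Both must be finite.

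For the direction ($\Rightarrow$), assume $\mathbf{f}$ is summable. Given an antichain $A \subseteq N_{\mathbf{f}}$, the image $\pi_2(A)$ consists of pairwise incomparable or equal elements of $\Omega$, so it is an antichain in $S$ and hence finite. Each fibre $\pi_2^{-1}(p) \cap A$ embeds into $\{i \suchthat p \in \tmop{supp} \mathbf{f}(i)\}$ which is finite by summability, so $A$ is a finite union of finite sets. Given a decreasing chain $C \subseteq N_{\mathbf{f}}$, the above injectivity remark yields that $\pi_2(C)$ is a chain in $S$ order-isomorphic to $C$; since $C$ has no minimal element, neither does $\pi_2(C)$, contradicting Noetherianity of $S$ (and showing $C$ must in fact be empty, hence certainly finite).

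There is no serious obstacle: the content of the lemma is essentially bookkeeping about the order $<_{\mathbf{f}}$ and the projection $\pi_2$. The only point requiring a bit of care is the handling of the fibres of $\pi_2$ in both directions, namely that within a fibre all elements are pairwise incomparable, which is exactly what converts the two-part summability condition (Noetherian union of supports plus finite fibres) into the single Noetherian condition on $N_{\mathbf{f}}$.
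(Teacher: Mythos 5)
Your proof is correct and follows essentially the same route as the paper's: both arguments analyse decreasing chains and antichains in $N_{\mathbf{f}}$ via the projection to $\Omega$, using that each fibre over a fixed $p$ is an antichain and that chains project injectively and order-isomorphically onto chains of $\bigcup_{i\in I}\operatorname{supp}\mathbf{f}(i)$. Your write-up is in fact slightly more careful than the paper's in separating the two implications and in handling the fibres of the projection explicitly.
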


    \begin{proof}
        Consider a non-empty chain $C$ in $(N_{\mathbf{f}}, <_{\mathbf{f}})$. Given $(i, p) \in C$, we
  have $p \in \bigcup_{j \in I} \tmop{supp} \mathbf{f} (j)$, and $(i, p)$ is $<_{\mathbf{f}}$
  minimal in $C$ if and only if $p$ is minimal in $\bigcup_{i \in I}
  \tmop{supp} \mathbf{f} (i)$. So $N_{\mathbf{f}}$ has infinite decreasing chains if and only if
  $\bigcup_{i \in I} \tmop{supp} \mathbf{f} (i)$ has infinite decreasing chains.
  
  Consider an antichain $A$ in $(N_{\mathbf{f}}, <_{\mathbf{f}})$. For $(i, p), (j, q) \in A$,
  either $p = q$ and $i \neq j$ or $p$ and $q$ are not comparable in $(\Omega,
  <)$. So $N_{\mathbf{f}}$ has an infinite antichain if and only if there is an infinite
  antichain in $\bigcup_{i \in I} \tmop{supp} \mathbf{f} (i)$ or there is an $p \in
  \bigcup_{i \in I} \tmop{supp} \mathbf{f} (i)$ such that the set $I_p = \{ i \in I
  \suchthat p \in \tmop{supp} \mathbf{f} (i) \}$ is infinite.
  
  In view of the definitions of summability and Noetherian
  orderings, we deduce that $\mathbf{f}$ is summable if and only if
  $(N_{\mathbf{f}}, <_{\mathbf{f}})$ is Noetherian.
\end{proof}

\subsection{Extensive Noetherian choice operators}

    We next present in \Cref{th-vdH} a version of {\cite[Theorem~A.4]{vdH:phd}} adjusted to our setting and with a self-contained proof. This result is used to prove  \Cref{th-superclosed-End}, which stipulates that the hypothesis of \Cref{th-der-aut-correspondence} holds for algebras of Noetherian series. By combining Theorems~\ref{th-der-aut-correspondence} and \ref{th-superclosed-End} we obtain our second main result, \Cref{th-der-aut-correspondence-Hahn-field}, on the correspondence between derivations and automorphisms.


    \begin{definition} 
        Let $(\Omega,<)$ be a partially ordered set. 
        We call a map $\vartheta\colon \Omega \longrightarrow \mathcal{P}(\Omega)$ a \textbf{choice operator} (on $\Omega$). 
        A choice operator $\vartheta$ is  {\bf{strictly extensive}} if for any $p \in \Omega$, we have $p<\vartheta(p)$, i.e.\ $p<y$ for any $y\in \vartheta(p)$.
        We say that $\vartheta$ is {\bf{Noetherian}}{\index{Noetherian
        choice operator}} if for any Noetherian subset $Y \subseteq \Omega$, the subset
        \[ Y_{\vartheta} \assign \bigcup_{y \in Y} \vartheta (y) \subseteq \Omega \]
        is Noetherian. 
    \end{definition}

For any non-empty word $w = (w_0, \ldots, w_m) \in \Omega^{\star} \setminus \{
\emptyset \}$, we write $w_{\bullet} \assign w_m \in \Omega$ for the last
letter of $w$. Let $Y \subseteq \Omega$ be a subset and $\vartheta$ a choice operator on $\Omega$. Let $\vartheta^+ (Y)$
denote the set of non-empty words $(w_0, \ldots, w_m) \in \Omega^{\star}
\setminus \{ \emptyset \}$ where $w_0 \in Y$, and for each $i < m$, we have
$w_{i + 1} \in \vartheta (w_i)$. We endow $\vartheta^+ (Y)$ with the partial ordering
$<_{\vartheta}$ defined by
\[ w <_{\vartheta} w' \Longleftrightarrow w_{\bullet} < w'_{\bullet} . \]


\begin{proposition}
  \label{th-vdH} Let $(\Omega,<)$ be a partially ordered set.  Let $\vartheta$ be a
  Noetherian and strictly extensive choice operator on $\Omega$. Then for all
  Noetherian subsets $Y$ of $\Omega$, the partially ordered set $(\vartheta^+ (Y),<_{\vartheta})$ is Noetherian.
\end{proposition}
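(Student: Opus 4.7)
The plan is a Nash--Williams style minimal bad sequence argument. Suppose for a contradiction that $(\vartheta^+(Y), <_\vartheta)$ is not Noetherian; by \Cref{lem-Noeth-subsequence} it admits a bad sequence, and by \Cref{lem:minbadsequence} applied to the length function $f(w) := |w|$ we fix a bad sequence $(w^{(n)})_{n}$ that is minimal for $f$, writing $w^{(n)} = (w^{(n)}_0, \ldots, w^{(n)}_{m_n})$. Only finitely many terms are singletons, for otherwise the subsequence of length-one words, identified with elements of $Y$, would be a bad sequence in $Y$, contradicting its Noetherianness. So beyond some index $N$ we have $m_n \geq 1$, and we may define the truncations $v^{(n)} := (w^{(n)}_0, \ldots, w^{(n)}_{m_n - 1}) \in \vartheta^+(Y)$.

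The heart of the proof is the claim that the set $T := \{v^{(n)} : n > N\}$ carries no bad sequence. Given such a bad sequence $(v^{(n_k)})_{k}$ with $n_k > N$, let $n_\ast := \min_k n_k$, attained at some index $k_\ast$. Passing to the tail from $k_\ast$, I may assume without loss of generality that $v^{(n_\ast)}$ is its initial term; relabel the tail as $(v^{(p_k)})_{k \geq 0}$ with $p_0 = n_\ast \leq p_k$ for all $k$. The spliced sequence
\[ u := (w^{(0)}, w^{(1)}, \ldots, w^{(n_\ast - 1)}, v^{(p_0)}, v^{(p_1)}, v^{(p_2)}, \ldots) \]
is bad: within the prefix $(w^{(i)})_{i < n_\ast}$ badness is inherited from $(w^{(n)})$, within the suffix $(v^{(p_k)})_k$ it is the assumed badness, and for a cross pair (an index $i < n_\ast$ against position $n_\ast + k$, corresponding to $v^{(p_k)}$ with $p_k \geq n_\ast > i$) strict extensiveness of $\vartheta$ gives $v^{(p_k)}_\bullet = w^{(p_k)}_{m_{p_k} - 1} < w^{(p_k)}_\bullet$, so any relation $w^{(i)} \leq_\vartheta v^{(p_k)}$ would force $w^{(i)} <_\vartheta w^{(p_k)}$, contradicting the badness of $(w^{(n)})$. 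Yet $f(u_{n_\ast}) = |v^{(p_0)}| = |w^{(n_\ast)}| - 1 < f(w^{(n_\ast)})$, contradicting the minimality of $(w^{(n)})$.

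Thus $T$ has no bad sequence. \Cref{prop-Noetherian-subsequence} applied inside $T$ to the sequence $(v^{(n)})_{n > N}$ then yields a weakly increasing subsequence $(v^{(n_k)})_k$, so that the penultimate letters $(w^{(n_k)}_{m_{n_k} - 1})_k$ form an ascending chain in $\Omega$. Any such chain is Noetherian, so Noetherianness of $\vartheta$ forces $B := \bigcup_k \vartheta(w^{(n_k)}_{m_{n_k} - 1})$ to be a Noetherian subset of $\Omega$, and each ultimate letter $w^{(n_k)}_\bullet$ lies in $B$ by the very definition of $\vartheta^+(Y)$. A second application of \Cref{prop-Noetherian-subsequence} to the sequence $(w^{(n_k)}_\bullet)_k$ inside $B$ provides indices $k < k'$ with $w^{(n_k)}_\bullet \leq w^{(n_{k'})}_\bullet$, whence $w^{(n_k)} \leq_\vartheta w^{(n_{k'})}$, contradicting the badness of $(w^{(n)})$ and closing the argument.

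The main obstacle is the delicate bookkeeping at the splicing step, where strict extensiveness of $\vartheta$ is deployed precisely to keep every cross pair of $u$ bad; the argument is very sensitive to the choice of $n_\ast$ as the smallest index among the $n_k$, since otherwise one cannot lift $w^{(i)} \leq_\vartheta v^{(p_k)}$ to a comparison contradicting the badness of the original sequence. A secondary subtlety is to guarantee at the very end that the weakly increasing comparison $w^{(n_k)}_\bullet \leq w^{(n_{k'})}_\bullet$ translates into a genuine $\leq_\vartheta$ comparison of distinct words; this is handled by refining the extractions so that equality of ultimate letters forces equality of words, combined with the observation that a bad sequence cannot contain repeated entries.
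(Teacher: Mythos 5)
Your proposal follows the paper's argument essentially step for step: a minimal bad sequence for the length function (via \Cref{lem-Noeth-subsequence} and \Cref{lem:minbadsequence}), the observation that all but finitely many terms have length at least two, truncation of the last letter, the splicing argument in which minimality and strict extensiveness of $\vartheta$ together rule out a bad sequence of truncations, and finally a double extraction of increasing subsequences, first of the truncations and then of the last letters using Noetherianity of $\vartheta$. Everything up to and including the double extraction is correct and matches the paper, including the treatment of the cross pairs in the spliced sequence, where your choice of $n_\ast$ as the minimal index plays exactly the role of the paper's assumption $j_0<j_1<\cdots$.

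The step you dismiss as a ``secondary subtlety'' is, however, a genuine gap --- and it is the same gap that the paper's own proof passes over when it asserts that $(w_{n_i})_i$ is nondecreasing because the truncations and the last letters are. From $w^{(n_k)}_\bullet \leqslant w^{(n_{k'})}_\bullet$ you may conclude $w^{(n_k)} \leqslant_\vartheta w^{(n_{k'})}$ only when the last letters are \emph{strictly} comparable or the words coincide: two distinct words with equal last letters are $<_\vartheta$-incomparable, so they may populate an antichain, and a bad sequence may consist entirely of such words. Your proposed patch (refine the extraction so that equality of ultimate letters forces equality of words) cannot be carried out under the stated hypotheses. Take $\Omega=\mathbb{N}\cup\{\omega\}$ ordered as the ordinal $\omega+1$, with $\vartheta(n)=\{n+1,\omega\}$, $\vartheta(\omega)=\emptyset$ and $Y=\{0\}$: this $\vartheta$ is strictly extensive and Noetherian in the sense of the definition (every subset of $\omega+1$ is well-ordered), yet the pairwise distinct words $(0,\omega),(0,1,\omega),(0,1,2,\omega),\ldots$ all lie in $\vartheta^+(Y)$ and share the last letter $\omega$, hence form an infinite $<_\vartheta$-antichain. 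So the difficulty is not one of bookkeeping: closing it requires a stronger, multiplicity-aware Noetherianity hypothesis on $\vartheta$ (each letter should be produced from only finitely many members of a Noetherian family, as in van der Hoeven's original formulation), which is what actually holds for the operator built from a $\tmop{Lin}$-summable family in \Cref{th-superclosed-End}. As written, your last sentence --- like the paper's --- does not follow from what precedes it.
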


    \begin{proof}
Let $Y\subseteq \Omega$ be a Noetherian subset. Assume for contradiction that $(\vartheta^+ (Y),<_{\vartheta})$ is not Noetherian. By Lemmas~\ref{lem-Noeth-subsequence} and \ref{lem:minbadsequence}, there is a
bad sequence $(w_i)_{i \in \mathbb{N}}$ in $\vartheta^+
(Y)$ which is minimal for the length map $w \mapsto | w |$. We write $w_i = (w_{i,0},...,w_{i,|w_i|-1})$ for each $i \in \mathbb{N}$. Since $Y$ itself is Noetherian, there is an $m \in \mathbb{N}$ with $| w_j | \geqslant 2$ for all $j \geqslant m$. Otherwise their would be a subsequence of $(w_i)_{i \in \mathbb{N}}$ whose elements have length $1$, whence a bad sequence in $(Y,<_{\vartheta})=(Y,<)$.

For $j \geqslant m$, we write $z_j \assign (w_{j, 0}, \ldots, w_{j, | w_j | -
2}) \in \vartheta^+ (Y)$.  We claim that the set $\mathcal{Z} \assign \{ z_j
\suchthat j \geqslant m \}$, is Noetherian. Indeed, assume
for contradiction that it is not. By \Cref{lem-Noeth-subsequence}, there is a bad sequence $(z_{j_i})_{i \in \mathbb{N}}$ in
$\mathcal{Z}$, where $j_0 < j_1 < \cdots$. Note that by minimality of $(w_i)_{i \in \mathbb{N}}$, the sequence
\[ u \assign (w_0, \ldots, w_{j_0 - 1}, z_{j_0}, z_{j_1}, \ldots) \] cannot be not bad. Since $(z_{j_i})_{i \in
\mathbb{N}}$ and $(w_i)_{i \in
\mathbb{N}}$ are bad, there must exist $i < j_0$ and $p \in \mathbb{N}$ with
$w_i \leqslant_{\vartheta} z_{j_p}$. Since $\vartheta$ is strictly extensive,
we have
\[ (w_{j_p})_{\bullet} \in \vartheta ((z_{j_p})_{\bullet}) >
   (z_{j_p})_{\bullet}, \]
so $w_i <_{\vartheta} w_{j_p}$: a contradiction. We deduce that $\mathcal{Z}$ is
Noetherian. We may thus extract a nondecreasing subsequence $(z_{m_i})_{i \in \mathbb{N}}$ of $(z_j)_{j \in \mathbb{N}}$. The subset $\mathcal{Z}_{\bullet}$ of $\Omega$ is Noetherian, so $\vartheta(\mathcal{Z}_{\bullet})$ is Noetherian. Thus there is a subsequence $(w_{n_i})_{i \in \mathbb{N}}$ of $(w_{m_i})_{i \in \mathbb{N}}$ such that $((w_{n_i})_{\bullet})_{i \in \mathbb{N}}$ is nondecreasing.
Since $(z_{n_i})_{i \in \mathbb{N}}$ and $((w_{n_i})_{\bullet})_{i \in \mathbb{N}}$ are nondecreasing, so is $(w_{n_i})_{i \in \mathbb{N}}$.
This last contradiction concludes the proof.\end{proof}

\begin{lemma}
    $k \tmop{Id}_{\mathbf{H}_{\Omega} k} +
        \tmop{Lin}_{\prec}^+ (\mathbf{H}_{\Omega} k)$ is a closed subspace of $(\tmop{Lin}^+ (\mathbf{H}_{\Omega} k),\Sigma)$. 
\end{lemma}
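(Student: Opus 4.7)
The plan is to establish a canonical decomposition of elements of $k \tmop{Id}_{\mathbf{H}_{\Omega} k} + \tmop{Lin}_{\prec}^+(\mathbf{H}_{\Omega} k)$ and then reduce $\tmop{Lin}$-summability in the larger subspace to $\tmop{Lin}$-summability in its two constituent pieces. First I would check that the sum $k \tmop{Id}_{\mathbf{H}_{\Omega} k} + \tmop{Lin}_{\prec}^+(\mathbf{H}_{\Omega} k)$ is direct: fixing any $p \in \Omega$ (using $\Omega \neq \emptyset$), for $c \in k \setminus \{0\}$ the map $c \tmop{Id}_{\mathbf{H}_{\Omega} k}$ sends $\mathbbm{1}_p$ to $c \mathbbm{1}_p$, whose support contains $p$, so $c \tmop{Id}_{\mathbf{H}_{\Omega} k}$ is not contracting by \Cref{lem-contracting-criterion}. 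Hence every $\phi \in k \tmop{Id}_{\mathbf{H}_{\Omega} k} + \tmop{Lin}_{\prec}^+(\mathbf{H}_{\Omega} k)$ admits a unique decomposition $\phi = c(\phi) \tmop{Id}_{\mathbf{H}_{\Omega} k} + \psi(\phi)$, and since $\tmop{supp}(\psi(\phi)(\mathbbm{1}_p)) > p$, the scalar $c(\phi)$ is recovered uniformly as $(\phi(\mathbbm{1}_p))(p)$ for any $p \in \Omega$.

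Given a $\tmop{Lin}$-summable family $\boldsymbol{\phi} \colon J \longrightarrow k \tmop{Id}_{\mathbf{H}_{\Omega} k} + \tmop{Lin}_{\prec}^+(\mathbf{H}_{\Omega} k)$, I would set $c_j \assign c(\boldsymbol{\phi}(j))$ and $\psi_j \assign \psi(\boldsymbol{\phi}(j))$, and apply $\tmop{Lin}$-summability to the trivially summable singleton family $\mathbf{v} = (\mathbbm{1}_p)$. This yields that $(\boldsymbol{\phi}(j)(\mathbbm{1}_p))_{j \in J}$ is summable in $(\mathbf{H}_{\Omega} k, \Sigma^{\Omega})$, and the finiteness condition at the exponent $p$ in the definition of $\Sigma^{\Omega}$ then says exactly that $\{j \in J \suchthat c_j \neq 0\} = \{j \in J \suchthat p \in \tmop{supp} \boldsymbol{\phi}(j)(\mathbbm{1}_p)\}$ is finite. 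This is the crux of the argument.

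With $(c_j)_{j \in J}$ of finite support, $c \assign \sum_{j \in J} c_j \in k$ is a well-defined finite sum, and the family $(c_j \tmop{Id}_{\mathbf{H}_{\Omega} k})_{j \in J}$ itself has finite support in $\tmop{Lin}^+(\mathbf{H}_{\Omega} k)$, hence is $\tmop{Lin}$-summable with sum $c \tmop{Id}_{\mathbf{H}_{\Omega} k}$ by \hyperref[ss1]{\textbf{SS1}} for $\Sigma^{\tmop{Lin}}$. Since $\dom \Sigma_J^{\tmop{Lin}}$ is a vector subspace of $\tmop{Lin}^+(\mathbf{H}_{\Omega} k)^J$ (as recorded in \Cref{prop-multi-strong}), the family $(\psi_j)_{j \in J} = (\boldsymbol{\phi}(j) - c_j \tmop{Id}_{\mathbf{H}_{\Omega} k})_{j \in J}$ is $\tmop{Lin}$-summable. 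By the closedness part of \Cref{cor:linideal}, $\sum_{j \in J} \psi_j \in \tmop{Lin}_{\prec}^+(\mathbf{H}_{\Omega} k)$, whence $\sum_{j \in J} \boldsymbol{\phi}(j) = c \tmop{Id}_{\mathbf{H}_{\Omega} k} + \sum_{j \in J} \psi_j$ lies in $k \tmop{Id}_{\mathbf{H}_{\Omega} k} + \tmop{Lin}_{\prec}^+(\mathbf{H}_{\Omega} k)$.

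The only delicate step is the uniform extraction of the scalars: one has to probe the $\boldsymbol{\phi}(j)$ at a specific indicator $\mathbbm{1}_p$ and invoke the contracting condition to cleanly kill the $\psi_j$-contribution to the coefficient of $p$, thereby converting $\tmop{Lin}$-summability of $\boldsymbol{\phi}$ into finiteness of the support of $(c_j)_{j \in J}$. Once that is in hand, everything else is bookkeeping, combining linearity of $\dom \Sigma_J^{\tmop{Lin}}$ with the previously established closedness of $\tmop{Lin}_{\prec}^+(\mathbf{H}_{\Omega} k)$.
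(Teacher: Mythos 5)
Your proof is correct and follows essentially the same route as the paper's: both reduce the problem to the finiteness of the set of indices with nonzero identity component, obtained by testing $\tmop{Lin}$-summability against the single summable family $(\mathbbm{1}_p)$ for a fixed $p\in\Omega$, and then invoke the closedness of $\tmop{Lin}_{\prec}^+(\mathbf{H}_{\Omega}k)$ from \Cref{cor:linideal}. The only cosmetic difference is that the paper splits the index set $J$ into the finite set of non-contracting members and its complement (using \hyperref[ss3]{\textbf{SS3}}), whereas you split each map as $c_j\tmop{Id}+\psi_j$ and use linearity of $\dom\Sigma^{\tmop{Lin}}_J$; your explicit verification that the sum is direct is a welcome extra.
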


\begin{proof}
    Let $\mathbf{f}$ be a $\tmop{Lin}$-summable family in $k \tmop{Id}_{\mathbf{H}_{\Omega} k} +
        \tmop{Lin}_{\prec}^+ (\mathbf{H}_{\Omega} k)$, indexed by a set $J$ and let $p \in \Omega$. The set \[J'\assign \{j \in J \suchthat \mathbf{f}(j) \notin \tmop{Lin}_{\prec}^+ (\mathbf{H}_{\Omega} k) \}= \{j \in J \suchthat p \in \tmop{supp} \mathbf{f}(j)(\mathbbm{1}_p)\} \] is finite by summability of $(\mathbf{f}(j)(\mathbbm{1}_p))_{j \in J}$. Note that $\sum \limits_{j \in J\setminus J'} \mathbf{f}(j) \in \tmop{Lin}_{\prec}^+ (\mathbf{H}_{\Omega} k)$ by \Cref{cor:linideal}. So by \hyperref[ss3c]{\textbf{SS3c}}, we have \[\sum \limits_{j \in J} \mathbf{f}(j) = \sum \limits_{j \in J'} \mathbf{f}(j)+\sum \limits_{j \in J\setminus J'} \mathbf{f}(j) \in k \tmop{Id}_{\mathbf{H}_{\Omega} k} +
        \tmop{Lin}_{\prec}^+ (\mathbf{H}_{\Omega} k).\]  This shows that $k \tmop{Id}_{\mathbf{H}_{\Omega} k} +
        \tmop{Lin}_{\prec}^+ (\mathbf{H}_{\Omega} k)$ is closed.
\end{proof}

It follows that $k \tmop{Id}_{\mathbf{H}_{\Omega} k} +
        \tmop{Lin}_{\prec}^+ (\mathbf{H}_{\Omega} k)$ is a summability algebra for the restricted summability structure.

\begin{theorem}
  \label{th-superclosed-End} The algebra $k \tmop{Id}_{\mathbf{H}_{\Omega} k} +
  \tmop{Lin}_{\prec}^+ (\mathbf{H}_{\Omega} k)$ has evaluations.
\end{theorem}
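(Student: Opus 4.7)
The plan. Write $\mathfrak{m}:=\tmop{Lin}_{\prec}^+(\mathbf{H}_{\Omega} k)$. Corollary~\ref{cor:linideal} gives that $\mathfrak{m}$ is a closed subalgebra of $\tmop{Lin}^+(\mathbf{H}_{\Omega} k)$ and an ideal of $A:=k\tmop{Id}_{\mathbf{H}_{\Omega} k}+\mathfrak{m}$, while the lemma preceding the present theorem states that $A$ itself is a closed subspace of $\tmop{Lin}^+(\mathbf{H}_{\Omega} k)$; so $\mathfrak{m}$ is a closed ideal of $A$ in the sense of Definition~\ref{def-closed}, and only the summability clause of that definition remains to be checked. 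Fix a set $J$ and $\mathbf{f}\in\tmop{dom}\Sigma^{\mathfrak{m}}_J$, and define $\boldsymbol{\Phi}\colon J^{\star}\to A$ by $\boldsymbol{\Phi}(\emptyset):=\tmop{Id}_{\mathbf{H}_{\Omega} k}$ and $\boldsymbol{\Phi}(\theta_1,\ldots,\theta_n):=\mathbf{f}(\theta_1)\circ\cdots\circ\mathbf{f}(\theta_n)$; since $\mathfrak{m}$ is a subalgebra, $\boldsymbol{\Phi}(\theta)\in A$ for every $\theta$. The goal is then to show that $\boldsymbol{\Phi}$ is $\tmop{Lin}$-summable in $\tmop{Lin}^+(\mathbf{H}_{\Omega} k)$, hence in $A$ by closedness. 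By Proposition~\ref{prop-multisummability-criterion}, this reduces to proving, for each Noetherian subset $S\subseteq\Omega$, that the family $\mathbf{g}\colon(\theta,p)\mapsto\boldsymbol{\Phi}(\theta)(\mathbbm{1}_p)$ indexed by $J^{\star}\times S$ is summable in $(\mathbf{H}_{\Omega} k,\Sigma^{\Omega})$.

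The key tool is Theorem~\ref{th-vdH}, accessed via the choice operator $\vartheta\colon\Omega\to\mathcal{P}(\Omega)$ defined by $\vartheta(p):=\bigcup_{j\in J}\tmop{supp}\mathbf{f}(j)(\mathbbm{1}_p)$. Contractivity of each $\mathbf{f}(j)$ and Lemma~\ref{lem-contracting-criterion} give $p<\vartheta(p)$, so $\vartheta$ is strictly extensive. The $\tmop{Lin}$-summability of $\mathbf{f}$ together with Proposition~\ref{prop-multisummability-criterion} shows that for every Noetherian $Y\subseteq\Omega$ the union $\vartheta(Y)=\bigcup_{(j,p)\in J\times Y}\tmop{supp}\mathbf{f}(j)(\mathbbm{1}_p)$ is Noetherian, so $\vartheta$ is Noetherian. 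Theorem~\ref{th-vdH} then delivers that $(\vartheta^+(S),<_{\vartheta})$ is Noetherian.

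An easy induction on $n$ using strong linearity of each $\mathbf{f}(j)$ shows that any $q\in\tmop{supp}\boldsymbol{\Phi}(\theta_1,\ldots,\theta_n)(\mathbbm{1}_p)$ is the endpoint of a chain $p=w_0,w_1,\ldots,w_n=q$ with $w_{i+1}\in\tmop{supp}\mathbf{f}(\theta_{n-i})(\mathbbm{1}_{w_i})$, so $(w_0,\ldots,w_n)\in\vartheta^+(S)$. I would introduce the auxiliary set $\tilde N$ of pairs $(w,\theta)$, where $w=(w_0,\ldots,w_n)\in\vartheta^+(S)$ and $\theta=(\theta_1,\ldots,\theta_n)\in J^n$ satisfy the cascading compatibility $w_{i+1}\in\tmop{supp}\mathbf{f}(\theta_{n-i})(\mathbbm{1}_{w_i})$ for $i<n$, equipped with the relation $(w,\theta)<(w',\theta')\iff w_{\bullet}<w'_{\bullet}$. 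The map $\pi\colon\tilde N\to N_{\mathbf{g}}$ sending $((w_0,\ldots,w_n),\theta)\mapsto((\theta,w_0),w_n)$ is then order-preserving by construction and surjective by the induction, so $N_{\mathbf{g}}$ would be Noetherian as soon as $\tilde N$ is, and Lemma~\ref{lem-Noeth-criterion} would close the argument.

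The main obstacle I anticipate is precisely the Noetherian-ness of $\tilde N$. For this I would use that the projection $\tilde N\to\vartheta^+(S)$, $(w,\theta)\mapsto w$, has finite fibres: the compatibility condition confines each $\theta_{n-i}$ to the set $\{j\in J:w_{i+1}\in\tmop{supp}\mathbf{f}(j)(\mathbbm{1}_{w_i})\}$, which is finite by the finiteness clause of summability of $\mathbf{f}$ over the Noetherian set $\rho(\vartheta^+(S))$, with $\rho$ the endpoint map. A bad-sequence-plus-pigeonhole argument then lifts Noetherian-ness: elements of any bad sequence in $\tilde N$ are pairwise distinct, so the finite fibres force infinitely many distinct $w$-values; after thinning to those, the projected sequence has distinct $w^{i_k}$ with $w^{i_k}_{\bullet}\not<w^{i_l}_{\bullet}$ for $i_k<i_l$, a bad sequence in $(\vartheta^+(S),<_{\vartheta})$, contradicting Theorem~\ref{th-vdH}. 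This final combinatorial passage, together with the book-keeping between words in $\vartheta^+(S)$ and iterated compositions, is the most delicate part of the plan; everything else is an essentially direct application of the machinery already in place.
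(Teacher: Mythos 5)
Your proposal is correct and shares the paper's overall skeleton (reduce via \Cref{prop-multisummability-criterion} to summability of $(\boldsymbol{\Phi}(\theta)(\mathbbm{1}_p))_{(\theta,p)}$ over a Noetherian $S$, and control the cascading supports of iterated compositions through \Cref{th-vdH}), but it distributes the combinatorial difficulty differently. The paper works with a choice operator on $\Omega\times J$ that records the index $j$ at every step; the price is that the Noetherianity of that operator is no longer automatic and requires the long case analysis extracting increasing subsequences of $(q_n,j_n)$, which is where both clauses of summability of $\mathbf{f}$ enter. You instead take the projected choice operator $\vartheta(p)=\bigcup_j\tmop{supp}\mathbf{f}(j)(\mathbbm{1}_p)$ on $\Omega$ alone, whose Noetherianity is immediate from the ``only if'' direction of \Cref{prop-multisummability-criterion}, and then recover the lost $J$-labels afterwards: the set $\tilde N$ of label-decorated words has finite fibres over $\vartheta^+(S)$ (this is exactly where the finiteness clause of summability re-enters), and your pigeonhole-plus-bad-sequence argument correctly lifts Noetherianity from $(\vartheta^+(S),<_\vartheta)$ to $\tilde N$ and hence, via the order-reflecting surjection onto $N_{\mathbf{g}}$ and \Cref{lem-Noeth-criterion}, to summability. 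Two small points you should make explicit in a full write-up: that the set of letters occurring in words of $\vartheta^+(S)$ (your $\rho(\vartheta^+(S))$) is Noetherian --- which follows since any bad sequence there lifts to one in $(\vartheta^+(S),<_\vartheta)$ --- so that the finiteness clause applies to the fibres; and the inductive bookkeeping matching $q\in\tmop{supp}(\mathbf{f}(\theta_1)\circ\cdots\circ\mathbf{f}(\theta_n))(\mathbbm{1}_p)$ with a compatible word, which plays the role of the sets $W_i$ in the paper's proof. With those details supplied, your route is a valid and arguably more modular alternative: it isolates the subsequence-extraction work inside a single clean statement about finite-to-one order-preserving surjections rather than inside the verification that a product choice operator is Noetherian.
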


\begin{proof} In view of \Cref{cor:linideal}, it suffices to show   that given a set $J$ and a $\tmop{Lin}$-summable family $(\phi_j)_{j \in J}
  \subseteq (\tmop{Lin}_{\prec}^+ (\mathbf{H}_{\Omega} k))^J$, the
  family
  \[ (\phi_{\theta_1} \circ \cdots \circ \phi_{\theta_n})_{(\theta_1, \ldots,
     \theta_n) \in J^{\star}} \]
  is $\tmop{Lin}$-summable.
  We may assume that $J$ is non-empty, and it suffices to show that
  \[ (\phi_{\theta_0} \circ \cdots \circ \phi_{\theta_n})_{(\theta_0, \ldots,
     \theta_n) \in J^{\star} \setminus \{\varnothing\}} \]
  is $\tmop{Lin}$-summable.
  
  Let $\pi \colon \Omega \times J \longrightarrow \Omega$ denote the projection on
  the first variable. We consider the partial ordering $<$ on $\Omega \times J$ given
  by $(p, i) < (q, j) \Longleftrightarrow p < q$. Consider the choice operator
  $\vartheta$ on $\Omega \times J$ given by
  \[ \forall p \in \Omega, \vartheta (p, i) \assign \{ (q, j) \suchthat j \in
     J \wedge q \in \tmop{supp} \phi_j (\mathbbm{1}_p) \} . \]
  We want to apply Proposition~\ref{th-vdH} to this choice operator. Since each $\phi_j$ for $j \in J$ is contracting, this is a strictly
  extensive choice operator. Let $Y \subseteq \Omega \times J$ be Noetherian.
  Let us show that the set
  \[ Y_{\vartheta} = \{ (q, j) \suchthat j \in J \wedge (\exists p \in \pi
     (Y), (q \in \tmop{supp} \phi_j (\mathbbm{1}_p))) \} \]
  is Noetherian by showing that each sequence in $Y_{\vartheta}$ has an
  increasing subsequence. Let $(q_n, j_n)_{n \in \mathbb{N}}$ be a sequence in
  $Y_{\vartheta}$, and pick for each $n \in \mathbb{N}$ a $(p_n, i_n) \in Y$
  with $q_n \in \tmop{supp} \phi_{j_n} (\mathbbm{1}_{p_n})$. Since $Y$ is
  Noetherian, we may assume by \Cref{prop-Noetherian-subsequence} that $(p_n,
  i_n)_{n \in \mathbb{N}}$ is increasing.
  
  Assume that $(p_n)_{n \in \mathbb{N}}$ has a constant subsequence. Without
  loss of generality, we may assume that it is constant itself. Assume that
  $(j_n)_{n \in \mathbb{N}}$ has no injective subsequence, i.e.\ $\{j_n\mid n\in\mathbb{N}\}$ is finite. So it has a
  constant subsequence, and we may assume that it is constant. The sequence
  $(q_n)_{n \in \mathbb{N}}$ in the Noetherian set $\tmop{supp}
  \phi_{j_n} (\mathbbm{1}_{ p_n})=\tmop{supp}
  \phi_{j_0} (\mathbbm{1}_{ p_0})$ has an increasing
  subsequence $(q_{\psi (n)})_{n \in \mathbb{N}}$ by \Cref{prop-Noetherian-subsequence}. So $(q_{\psi(n)}, j_0)_{n \in \mathbb{N}}$ is an increasing
  subsequence of~$(q_n, j_n)_{n \in \mathbb{N}}$. Assume now that $(j_n)_{n
  \in \mathbb{N}}$ has an injective subsequence. Since $(\phi_{j_n})_{n \in
  \mathbb{N}}$ is $\tmop{Lin}$-summable, the family $(\phi_{j_n} (\mathbbm{1}_{ p_0
  }))_{n \in \mathbb{N}}$ is summable. Therefore $(q_n)_{n \in
  \mathbb{N}}$ has an increasing subsequence. If this subsequence has no strictly increasing subsequence, then it has a constant subsequence $(q_{\mu(n)})_{n \in \mathbb{N}}$, but then the set $\{ n \in \mathbb{N} \suchthat q_{\mu(0)} \in \tmop{supp} \phi_{j_{\mu(n)}} (\mathbbm{1}_{p_0
  })\}$ is infinite: contradicting the summability of $(\phi_{j_{\mu(n)}} (\mathbbm{1}_{p_0})
  )_{n \in \mathbb{N}}$.
  Therefore $(q_n,
  j_n)_{n \in \mathbb{N}}$ has a strictly increasing subsequence.
  
  Assume now that $(p_n)_{n \in \mathbb{N}}$ has no constant subsequence. So
  $(p_n)_{n \in \mathbb{N}}$ has a strictly increasing subsequence
  $(p_{\varphi (n)})_{n \in \mathbb{N}}$. The family $(\phi_{j_{\varphi (n)}}
  (\mathbbm{1}_{p_{\varphi (n)}}))_{n \in \mathbb{N}}$ is
  summable. In particular $(q_{\varphi (n)})_{n \in \mathbb{N}}$
  has a strictly increasing subsequence, whence again $(q_n, j_n)_{n \in
  \mathbb{N}}$ has a strictly increasing subsequence.
  
  This shows that $\vartheta$ is Noetherian. Let $S \subseteq \Omega$ be a
  Noetherian subset and pick an arbitrary $j \in J$. By \Cref{th-vdH}, the set $\vartheta^+ (S \times \{ j \})$ is Noetherian for
  $<_{\vartheta}$. This means by \Cref{lem-Noeth-criterion} that the family $(p_{\bullet})_{p \in \vartheta^+ (S \times \{ j \})}$ is
  summable. Write
  \[ I \assign \{ (p, n, \theta_0, \ldots, \theta_n) \suchthat p \in S \wedge
     n \in \mathbb{N} \wedge \theta_0, \ldots, \theta_n \in J \} . \]
  For $i = (p, n, \theta_0, \ldots, \theta_n) \in I$, let $W_i$ denote the set
  of words \[w = (w_0, \ldots, w_{n + 1}) \in \vartheta^+ (S \times \{ j \})\]
  where $w_0 = (p, j)$ and $w_{k + 1} = (p_{k + 1}, \phi_{\theta_k})$ for a
  $p_{k + 1} \in \tmop{supp} \phi_{\theta_k} (\mathbbm{1}_{\pi (w_k)})$. Note
  that for each $i = (p, n, \phi_0, \ldots, \phi_n) \in I$, there is a map
  $c \in k^{W_i}$ such that $\phi_{\theta_n} \circ \cdots \circ \phi_{\theta_0} (\mathbbm{1}_p) = \sum_{w
  \in W_i} c (w) \mathbbm{1}_{\pi (w_{\bullet})}$.
  
  The sets $W_i, i \in I$ are pairwise disjoint, so by \hyperref[ss4]{\textbf{SS4}},
  \hyperref[ss3b]{\textbf{SS3b}} and \hyperref[uss]{\textbf{UF}}, the family $\left(
  \sum_{w \in W_i} c (w) \mathbbm{1}_{ \pi (w_{\bullet}) } \right)_{i \in I} =
  (\phi_{\theta_0} \circ \cdots \circ \phi_{\theta_n} (\mathbbm{1}_p))_{p \in S \wedge n
  \in \mathbb{N} \wedge \theta_0, \ldots, \theta_n \in J}$ is
  summable. We conclude with
  \Cref{prop-multisummability-criterion}.
\end{proof}

\subsection{The Der-Aut correspondence for Noetherian series}

Here, we assume that $k$ has characteristic zero. Let $(\Omega,<)$ be a partially ordered set.
By \Cref{th-superclosed-End}, the summability algebra $k\tmop{Id}_{\mathbf{H}_{\Omega} k}+\tmop{Lin}_{\prec}^+
(\mathbf{H}_{\Omega} k)$ has evaluations. As a consequence of Corollaries~\ref{cor-exp-log-bij}
and~\ref{cor-BCH-operation}, we have a group operation
\begin{eqnarray*}
  \ast \colon \tmop{Lin}_{\prec}^+ (\mathbf{H}_{\Omega} k) \times \tmop{Lin}_{\prec}^+
  (\mathbf{H}_{\Omega} k) & \longrightarrow & \tmop{Lin}_{\prec}^+ (\mathbf{H}_{\Omega} k)\\
  (\phi, \psi) & \longmapsto &  \phi
  + \psi + \frac{1}{2}  (\phi \circ \psi - \psi \circ \phi) + \cdots,
\end{eqnarray*}
and a group isomorphism
\begin{eqnarray}
  \exp \colon (\tmop{Lin}_{\prec}^+ (\mathbf{H}_{\Omega} k), \ast) & \longrightarrow &
  (\tmop{Id}_{\mathbf{H}_{\Omega} k} + \tmop{Lin}_{\prec}^+ (\mathbf{H}_{\Omega} k), \circ) 
  \label{eq-iso}\\
  \phi & \longmapsto & \sum_{n \in \mathbb{N}} \frac{1}{n!} \phi^{[n]} .
  \nonumber
\end{eqnarray}
Let $\mathbb{A}= k(\!(M)\!)$ be an algebra of Noetherian series, where $M$ is a partially ordered monoid. Let $1$-$\tmop{Aut}^+_k (\mathbb{A})$ denote the group
of bijective strongly linear morphisms of algebra $\sigma \colon \mathbb{A} \longrightarrow \mathbb{A}$ which preserve products, and with $\sigma (a) - a \prec a$ for all $a \in \mathbb{A}
\setminus \{0\}$.

\begin{remark}
\label{rem:1aut}In the case when $M$ is a linearly ordered abelian group, our notation $1$-$\tmop{Aut}^+_k (\mathbb{A})$ is compatible with that of \cite{KuhlSer:vaut}.
\end{remark}

\begin{theorem}
  \label{th-der-aut-correspondence-Hahn-field}The structures
  $(\tmop{Der}^+_{\prec} (\mathbb{A}), \ast)$ and $\left( 1 \mathord{\text{-}}
  \tmop{Aut}^+_k (\mathbb{A}), \circ \right)$ are groups, and we have an
  isomorphism
  \begin{eqnarray*}
    \exp \colon (\tmop{Der}^+_{\prec} (\mathbb{A}), \ast) & \longrightarrow &
    \left( 1 \mathord{\text{-}} \tmop{Aut}^+_k (\mathbb{A}), \circ \right)\\
    \partial & \longmapsto & \sum_{n \in \mathbb{N}} \frac{\partial^{[n]}}{n!}
    .
  \end{eqnarray*}
\end{theorem}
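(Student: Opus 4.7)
The strategy is to apply \Cref{th-der-aut-correspondence} to the ultrafinite summability algebra $\mathbb{A} = k(\!(M)\!)$ with the choice $\mathfrak{m} := \tmop{Lin}_{\prec}^+(\mathbb{A})$, and then to identify the two groups produced by that theorem with $\tmop{Der}^+_{\prec}(\mathbb{A})$ and $1\text{-}\tmop{Aut}^+_k(\mathbb{A})$, respectively. Once the hypotheses of \Cref{th-der-aut-correspondence} are in place, its conclusion directly yields the group isomorphism
\[ \exp \colon \tmop{Der}^+(\mathbb{A}) \cap \tmop{Lin}_{\prec}^+(\mathbb{A}) \longrightarrow \tmop{End}^+(\mathbb{A}) \cap (\tmop{Id}_{\mathbb{A}} + \tmop{Lin}_{\prec}^+(\mathbb{A})), \]
whose source is $\tmop{Der}^+_{\prec}(\mathbb{A})$ by definition, and whose explicit formula $\exp(\partial) = \sum_{n \in \mathbb{N}} \partial^{[n]}/n!$ matches that of \Cref{def:explog} since composition is the multiplication in $\tmop{Lin}^+(\mathbb{A})$, so that $\tmop{ev}_{\partial}(X_0^n) = \partial^{[n]}$.

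I would then verify the three hypotheses of \Cref{th-der-aut-correspondence} in turn. The space $(\mathbb{A}, \Sigma)$ is an ultrafinite summability space by \Cref{prop-gloub-spaces} and a summability algebra by \Cref{prop-Noeth-salgebra}. The subset $\mathfrak{m} = \tmop{Lin}_{\prec}^+(\mathbb{A})$ is a closed subalgebra of $\tmop{Lin}^+(\mathbb{A})$ by \Cref{cor:linideal}. Finally, $k\tmop{Id}_{\mathbb{A}} + \tmop{Lin}_{\prec}^+(\mathbb{A})$ has evaluations by \Cref{th-superclosed-End}, which is the genuinely deep input and whose proof (via the extensive Noetherian choice operator machinery of Proposition~\ref{th-vdH}) already constitutes the main technical work behind the present theorem.

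The remaining and essentially cosmetic step is to identify $\tmop{End}^+(\mathbb{A}) \cap (\tmop{Id}_{\mathbb{A}} + \tmop{Lin}_{\prec}^+(\mathbb{A}))$ with $1\text{-}\tmop{Aut}^+_k(\mathbb{A})$. If $\sigma = \tmop{Id}_{\mathbb{A}} + \phi$ with $\phi \in \tmop{Lin}_{\prec}^+(\mathbb{A})$, then the contraction condition $\phi(a) \prec a$ for $a \neq 0$ is precisely the clause $\sigma(a) - a \prec a$ in the definition of $1\text{-}\tmop{Aut}^+_k(\mathbb{A})$; conversely, for any such $\sigma$, the difference $\phi := \sigma - \tmop{Id}_{\mathbb{A}}$ is strongly linear by \Cref{prop-Lin+-subspace} and contracting by construction. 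The only non-tautological issue is bijectivity, and this is handled by \Cref{prop-closed-maximal-ideal} applied to $k\tmop{Id}_{\mathbb{A}} + \tmop{Lin}_{\prec}^+(\mathbb{A})$: that algebra is local with maximal ideal $\tmop{Lin}_{\prec}^+(\mathbb{A})$, hence every element of $\tmop{Id}_{\mathbb{A}} + \tmop{Lin}_{\prec}^+(\mathbb{A})$ is a unit therein, and its inverse (which lies in $\tmop{Lin}(\mathbb{A})$) witnesses that $\sigma$ is bijective as a self-map of $\mathbb{A}$. This completes the identification, and \Cref{th-der-aut-correspondence} delivers the claim.
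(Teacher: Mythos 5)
Your proposal is correct and follows essentially the same route as the paper: apply \Cref{th-der-aut-correspondence} with $\mathfrak{m}=\tmop{Lin}_{\prec}^+(\mathbb{A})$, using \Cref{th-superclosed-End} to supply the evaluations hypothesis, and then identify the two resulting sets with $\tmop{Der}^+_{\prec}(\mathbb{A})$ and $1$-$\tmop{Aut}^+_k(\mathbb{A})$. You are in fact somewhat more careful than the paper, which asserts the identification $(\tmop{Id}_{\mathbb{A}}+\tmop{Lin}_{\prec}^+(\mathbb{A}))\cap\tmop{End}^+(\mathbb{A})=1\text{-}\tmop{Aut}^+_k(\mathbb{A})$ without spelling out the bijectivity point that you correctly settle via \Cref{prop-closed-maximal-ideal}.
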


    \begin{proof}
       By \Cref{th-superclosed-End}, we may apply Theorem~\ref{th-der-aut-correspondence}, for $\mathfrak{m}=\tmop{Lin}_{\prec}^+ (\mathbb{A})$, to the summability algebra
        $k \tmop{Id}_{\mathbb{A}} +
  \tmop{Lin}_{\prec}^+ (\mathbb{A})$. We thus obtain the group isomorphism $\exp$ between $\tmop{Lin}_{\prec}^+ (\mathbb{A})\cap\tmop{Der}^+ (\mathbb{A}) = \tmop{Der}^+_{\prec} (\mathbb{A})$ with the operation $\ast$, and $(\tmop{Id}_{\mathbb{A}} + \tmop{Lin}_{\prec}^+ (\mathbb{A})) \cap \tmop{End}^+(\mathbb{A}) = 1 \mathord{\text{-}} \tmop{Aut}^+_k (\mathbb{A})$ with the composition.
    \end{proof}

We finish with a formal analog of the Lie homomorphism theorem.

\begin{theorem}
  Let $W$ be an ordered set, and consider the space of Noetherian series $\mathbb{W} \assign \mathbf{H}_{\Omega} k$. Let $\Phi \colon \tmop{Lin}^+_{\prec}
  (\mathbb{V}) \longrightarrow \tmop{Lin}^+_{\prec} (\mathbb{W})$ be a
  strongly linear morphism of Lie algebras. Then there exists a unique group
  morphism $\Psi \colon \tmop{Id}_{\mathbb{V}} + \tmop{Lin}_{\prec}^+ (\mathbb{V})
  \longrightarrow \tmop{Id}_{\mathbb{W}} + \tmop{Lin}_{\prec}^+ (\mathbb{W})$
  with
  \[ \Psi (\exp (\phi)) = \exp (\Phi (\phi)) \]
  for all $\phi \in \tmop{Lin}^+_{\prec} (\mathbb{V})$.
\end{theorem}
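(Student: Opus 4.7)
The plan is to define $\Psi$ by the formula $\Psi(\sigma) := \exp(\Phi(\log(\sigma)))$ for $\sigma \in \tmop{Id}_{\mathbb{V}} + \tmop{Lin}^+_{\prec}(\mathbb{V})$, where $\log$ and $\exp$ are the inverse bijections supplied by \Cref{th-der-aut-correspondence-Hahn-field}. Uniqueness is immediate: since $\exp$ is surjective onto $\tmop{Id}_{\mathbb{V}} + \tmop{Lin}^+_{\prec}(\mathbb{V})$, the prescribed identity $\Psi(\exp(\phi)) = \exp(\Phi(\phi))$ determines $\Psi$ on its whole domain. Well-definedness of the formula on the target side follows from $\Phi(\log(\sigma)) \in \tmop{Lin}^+_{\prec}(\mathbb{W})$ together with \Cref{th-der-aut-correspondence-Hahn-field} applied to $\mathbb{W}$.

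For the group morphism property, I would fix $\sigma_1, \sigma_2$ in the source, set $\phi_i := \log(\sigma_i)$, and invoke \Cref{cor-BCH-operation} to get $\sigma_1 \circ \sigma_2 = \exp(\phi_1 \ast \phi_2)$. Then
\[
\Psi(\sigma_1 \circ \sigma_2) = \exp(\Phi(\phi_1 \ast \phi_2)) \quad \text{and} \quad \Psi(\sigma_1) \circ \Psi(\sigma_2) = \exp(\Phi(\phi_1) \ast \Phi(\phi_2)),
\]
again by \Cref{cor-BCH-operation} applied inside $\tmop{Id}_{\mathbb{W}} + \tmop{Lin}^+_{\prec}(\mathbb{W})$. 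The whole problem thus reduces to the compatibility of $\Phi$ with the BCH operation, i.e.\ $\Phi(\phi_1 \ast \phi_2) = \Phi(\phi_1) \ast \Phi(\phi_2)$ for all $\phi_1, \phi_2 \in \tmop{Lin}^+_{\prec}(\mathbb{V})$.

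To establish this compatibility I would unfold the definition $\phi_1 \ast \phi_2 = \tmop{ev}_{\phi_1, \phi_2}(X_0 \ast X_1)$ and expand $X_0 \ast X_1 = \sum_{n > 0} \frac{(-1)^{n+1}}{n} K_n$. Strong linearity of $\tmop{ev}_{\phi_1,\phi_2}$ from \Cref{th-evaluations} yields $\phi_1 \ast \phi_2 = \sum_{n > 0} \frac{(-1)^{n+1}}{n} \tmop{ev}_{\phi_1, \phi_2}(K_n)$, and strong linearity of $\Phi$ reduces the claim to verifying $\Phi(\tmop{ev}_{\phi_1, \phi_2}(K_n)) = \tmop{ev}_{\Phi(\phi_1), \Phi(\phi_2)}(K_n)$ for each $n \geqslant 1$. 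By \Cref{prop-BCH}, every such $K_n$ lies in the Lie subalgebra $L \subseteq k \langle \! \langle 2 \rangle \! \rangle$ generated by $X_0, X_1$ under commutator. Both $\tmop{ev}_{\phi_1, \phi_2}$ and $\tmop{ev}_{\Phi(\phi_1), \Phi(\phi_2)}$ are algebra morphisms, hence restrict to Lie algebra morphisms on $L$; composed with the Lie morphism $\Phi$, the maps $\Phi \circ \tmop{ev}_{\phi_1, \phi_2}$ and $\tmop{ev}_{\Phi(\phi_1), \Phi(\phi_2)}$ therefore give two Lie morphisms from $L$ into $\tmop{Lin}^+_{\prec}(\mathbb{W})$ that agree on the generators $X_0, X_1$. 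Since $L$ is generated by $X_0$ and $X_1$ as a Lie algebra, the two morphisms coincide on all of $L$, yielding the required identity.

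The main obstacle I anticipate is coordinating the strong linearity steps: the expansion of $\phi_1 \ast \phi_2$ relies on strong linearity of the evaluation map in $\tmop{Lin}^+_{\prec}(\mathbb{V})$, the transit through $\Phi$ uses its assumed strong linearity, and the reassembly in the image uses strong linearity of $\tmop{ev}_{\Phi(\phi_1), \Phi(\phi_2)}$; each of these is underwritten by \Cref{th-superclosed-End} and \Cref{th-evaluations}, so the argument should go through cleanly. The truly delicate ingredient is the matching-on-generators step, which depends critically on the non-trivial fact (\Cref{prop-BCH}) that each $K_n$ is expressible as a Lie polynomial rather than merely an associative one; without that, being a Lie morphism would not be enough to transfer the identity across $\Phi$.
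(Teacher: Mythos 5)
Your proposal is correct and follows essentially the same route as the paper: define $\Psi(\sigma) = \exp(\Phi(\log\sigma))$, reduce the morphism property via \Cref{cor-BCH-operation} to the compatibility $\Phi(\phi_1 \ast \phi_2) = \Phi(\phi_1) \ast \Phi(\phi_2)$, and obtain that compatibility by combining strong linearity with the fact (\Cref{prop-BCH}) that each $K_n$ is a Lie polynomial in $X_0, X_1$. Your spelled-out justification of the termwise identity $\Phi(\tmop{ev}_{\phi_1,\phi_2}(K_n)) = \tmop{ev}_{\Phi(\phi_1),\Phi(\phi_2)}(K_n)$ — two Lie morphisms agreeing on generators — is a welcome elaboration of a step the paper merely asserts.
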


\begin{proof}
  We have $\tmop{Id}_{\mathbb{V}} + \tmop{Lin}_{\prec}^+ (\mathbb{V}) = \exp
  (\tmop{Lin}_{\prec}^+ (\mathbb{V}))$ by (\ref{eq-iso}), so the map
  $\Psi$ si uniquely determined by
  \[ \Psi (\sigma) \assign \exp (\Phi (\log (\sigma))) \]
  for all $\sigma \in \tmop{Id}_{\mathbb{V}} + \tmop{Lin}_{\prec}^+
  (\mathbb{V})$. Let $\sigma_1, \sigma_2 \in \tmop{Id}_{\mathbb{V}} +
  \tmop{Lin}_{\prec}^+ (\mathbb{V})$ and write $(\phi_1, \phi_2) \assign (\log
  (\sigma_1), \log (\sigma_2)) \in \tmop{Der}_{\prec}^+ (\mathbb{A})$. Since
  $\Phi$ is a morphism of Lie algebras, we have $\Phi (\tmop{ev}_{(\sigma_1,
  \sigma_2)} (Z_n)) = \tmop{ev}_{(\Phi (\sigma_1), \Phi (\sigma_2))} (K_n)$
  for all $n \in \mathbb{N}$, where $K_n \in k \langle \! \langle 2\rangle \! \rangle$ is as described in
  (\ref{eq-BCH-term}). Since $\Phi$ is strongly linear, we deduce that $\Phi
  (\tmop{ev}_{(\phi_1, \phi_2)} (X_0 \ast X_1)) = \tmop{ev}_{(\Phi (\phi_1),
  \Phi (\phi_2))} (X_0 \ast X_1)$, i.e.
  \begin{equation}
    \Phi (\phi_1 \ast \phi_2) = \Phi (\phi_1) \ast \Phi (\phi_2) .
    \label{eq-aux-BCH}
  \end{equation}
  It follows that
  \begin{align*}
    \Psi (\sigma_1 \circ \sigma_2) & = \Psi (\exp (\phi_1 \ast \phi_2)) &&
    \text{(\Cref{cor-BCH-operation})}\\
    & = \exp (\Phi (\phi_1 \ast \phi_2)) && \\
    & = \exp (\Phi (\phi_1) \ast \Phi (\phi_2)) && \text{(\ref{eq-aux-BCH})}\\
    & = \exp (\Phi (\phi_1)) \circ \exp (\Phi (\phi_2)) &&
    \text{(\Cref{cor-BCH-operation})}\\
    & = \Psi (\sigma_1) \circ \Psi (\sigma_2). &&
  \end{align*}
  Therefore $\Psi$ is a morphism.
\end{proof}

The same arguments using the identity $1 \text{-$\tmop{Aut}^+_k (\mathbb{A})$} =
\exp (\tmop{Der}_{\prec}^+ (\mathbb{A}))$ give:

\begin{theorem}
\label{th-Lie}
  Let $\mathbb{B}$ be an angelra of Noetherian series. Let $\Phi :
  \tmop{Der}^+_{\prec} (\mathbb{A}) \longrightarrow \tmop{Der}^+_{\prec}
  (\mathbb{B})$ be a strongly linear morphism of Lie algebras. Then there
  exists a unique group morphism $\Psi \colon 1 \text{-$\tmop{Aut}^+_k (\mathbb{A})$}
  \longrightarrow 1 \text{-$\tmop{Aut}^+_k (\mathbb{B})$}$ with
  \[ \Psi (\exp (\partial)) = \exp (\Phi (\partial)) \]
  for all $\partial \in \tmop{Der}^+_{\prec} (\mathbb{A})$.
\end{theorem}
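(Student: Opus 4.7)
The plan is to imitate verbatim the proof of the preceding theorem, simply restricting attention to the Lie subalgebras of derivations sitting inside $\tmop{Lin}^+_{\prec}$. I would define
\[ \Psi(\sigma) \assign \exp(\Phi(\log(\sigma))) \]
for $\sigma \in 1 \mathord{\text{-}} \tmop{Aut}^+_k (\mathbb{A})$. By \Cref{th-der-aut-correspondence-Hahn-field} applied to both $\mathbb{A}$ and $\mathbb{B}$, the map $\log$ is a bijection from $1 \mathord{\text{-}} \tmop{Aut}^+_k (\mathbb{A})$ onto $\tmop{Der}^+_{\prec}(\mathbb{A})$, and $\exp$ is a bijection from $\tmop{Der}^+_{\prec}(\mathbb{B})$ onto $1 \mathord{\text{-}} \tmop{Aut}^+_k (\mathbb{B})$; in particular $\Psi$ lands in $1 \mathord{\text{-}} \tmop{Aut}^+_k (\mathbb{B})$, and the requirement $\Psi(\exp(\partial)) = \exp(\Phi(\partial))$ forces the formula above, so uniqueness comes for free.

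The core step will be to establish the intertwining identity
\[ \Phi(\partial_1 \ast \partial_2) = \Phi(\partial_1) \ast \Phi(\partial_2) \]
for all $\partial_1, \partial_2 \in \tmop{Der}^+_{\prec}(\mathbb{A})$. Here I would recall from \Cref{prop-BCH} that $K_0 = X_0 + X_1$ and that each $K_n$ with $n > 0$ lies in the Lie subalgebra of $k \langle\! \langle 2 \rangle\! \rangle$ generated by commutators of $X_0$ and $X_1$. Consequently, each $\tmop{ev}_{(\partial_1, \partial_2)}(K_n)$ is a finite Lie-polynomial expression in $\partial_1$ and $\partial_2$, so since $\Phi$ is a morphism of Lie algebras we get termwise
\[ \Phi(\tmop{ev}_{(\partial_1, \partial_2)}(K_n)) = \tmop{ev}_{(\Phi(\partial_1), \Phi(\partial_2))}(K_n) \]
for every $n \in \mathbb{N}$. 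Invoking strong linearity of $\Phi$ to pass it through the infinite sum $\partial_1 \ast \partial_2 = \sum_{n > 0} \frac{(-1)^{n+1}}{n} \tmop{ev}_{(\partial_1, \partial_2)}(K_n)$ then yields the intertwining identity.

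With this in hand, the group morphism property would follow exactly as in the preceding theorem: for $\sigma_1, \sigma_2 \in 1 \mathord{\text{-}} \tmop{Aut}^+_k (\mathbb{A})$ with $\partial_i \assign \log(\sigma_i)$, two applications of \Cref{cor-BCH-operation} combined with the intertwining identity give
\[ \Psi(\sigma_1 \circ \sigma_2) = \exp(\Phi(\partial_1 \ast \partial_2)) = \exp(\Phi(\partial_1) \ast \Phi(\partial_2)) = \Psi(\sigma_1) \circ \Psi(\sigma_2). \]
The main obstacle, such as it is, is the bookkeeping verification that strong linearity of $\Phi$ with respect to the induced summability structures on the closed subalgebras $\tmop{Der}^+_{\prec}(\mathbb{A})$ and $\tmop{Der}^+_{\prec}(\mathbb{B})$ really does allow the exchange with the BCH series; this should fall out of the closedness of these derivation spaces under $\Sigma^{\tmop{Lin}}$ together with the definition of strong linearity applied to the restricted summability structures, since each $\tmop{ev}_{(\partial_1,\partial_2)}(K_n)$ already lies in $\tmop{Der}^+_{\prec}(\mathbb{A})$ by closure under $\ast$.
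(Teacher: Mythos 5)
Your proposal is correct and follows essentially the same route as the paper: define $\Psi(\sigma)\assign\exp(\Phi(\log(\sigma)))$, use the bijection $1\text{-}\tmop{Aut}^+_k(\mathbb{A})=\exp(\tmop{Der}^+_{\prec}(\mathbb{A}))$ from \Cref{th-der-aut-correspondence-Hahn-field} for well-definedness and uniqueness, derive $\Phi(\partial_1\ast\partial_2)=\Phi(\partial_1)\ast\Phi(\partial_2)$ from \Cref{prop-BCH} together with the Lie-morphism property and strong linearity of $\Phi$, and conclude via \Cref{cor-BCH-operation}. This matches the paper's argument step for step.
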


As a consequence of \Cref{cor-divisible-torsion-free}, we have:

\begin{corollary}
\label{cor-divisible}
  The group $1
  \text{-$\tmop{Aut}^+_k (\mathbb{A})$}$ is divisible and torsion-free.
\end{corollary}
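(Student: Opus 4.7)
The plan is to show that Corollary cor-divisible is a direct specialisation of Corollary cor-divisible-torsion-free, invoked through the data that was already set up to prove Theorem th-der-aut-correspondence-Hahn-field. Concretely, I would apply cor-divisible-torsion-free with the algebra replaced by $\mathbb{A} = k(\!(M)\!)$ and with $\mathfrak{m}$ chosen to be the closed subalgebra $\tmop{Lin}^+_{\prec}(\mathbb{A})$ of $\tmop{Lin}^+(\mathbb{A})$. The hypotheses needed are precisely those verified in Section~\ref{section-Noetherian-series}: $\mathbb{A}$ is an ultrafinite summability algebra, $\tmop{Lin}^+_{\prec}(\mathbb{A})$ is a closed subalgebra of $\tmop{Lin}^+(\mathbb{A})$ by \Cref{cor:linideal}, and $k\tmop{Id}_{\mathbb{A}} + \tmop{Lin}^+_{\prec}(\mathbb{A})$ has evaluations by \Cref{th-superclosed-End}. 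The standing assumption in this subsection that $k$ is of characteristic zero is the last ingredient.

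With these hypotheses, cor-divisible-torsion-free tells us that the group $\tmop{End}^+(\mathbb{A}) \cap (\tmop{Id}_{\mathbb{A}} + \tmop{Lin}^+_{\prec}(\mathbb{A}))$, under composition, is divisible and torsion-free. By the very identification that is used in the proof of \Cref{th-der-aut-correspondence-Hahn-field}, this intersection coincides with $1\mathord{\text{-}}\tmop{Aut}^+_k(\mathbb{A})$, which yields the claim.

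For the reader's convenience I would recall in one sentence why the underlying transported group $(\tmop{Der}^+_{\prec}(\mathbb{A}), \ast)$ is divisible and torsion-free: any $\partial \in \tmop{Der}^+_{\prec}(\mathbb{A})$ commutes with every scalar multiple $c\partial$ (all iterated commutators vanish), so each $K_n$ with $n \geq 1$ evaluates to zero at $(\partial, c\partial)$ by \Cref{prop-BCH}, and hence the $n$-fold $\ast$-power of $\partial$ equals $n\partial$ and $\partial = (\tfrac{1}{n}\partial)^{\ast n}$. Since $\mathrm{char}\,k = 0$, the former gives torsion-freeness and the latter divisibility, after which \Cref{th-der-aut-correspondence-Hahn-field} transports these properties to $1\mathord{\text{-}}\tmop{Aut}^+_k(\mathbb{A})$.

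The proof has essentially no obstacle; the only thing to be careful about is bookkeeping, namely to check that the data $(\mathbb{A}, \tmop{Lin}^+_{\prec}(\mathbb{A}))$ does satisfy the hypotheses of cor-divisible-torsion-free, which amounts to quoting \Cref{cor:linideal} and \Cref{th-superclosed-End}. No further computation is needed.
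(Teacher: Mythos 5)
Your proposal is correct and matches the paper's argument: the paper derives \Cref{cor-divisible} exactly by specialising \Cref{cor-divisible-torsion-free} to $\mathfrak{m}=\tmop{Lin}^+_{\prec}(\mathbb{A})$, whose hypotheses are supplied by \Cref{th-superclosed-End} and \Cref{cor:linideal}, and by the identification $(\tmop{Id}_{\mathbb{A}}+\tmop{Lin}^+_{\prec}(\mathbb{A}))\cap\tmop{End}^+(\mathbb{A})=1\mathord{\text{-}}\tmop{Aut}^+_k(\mathbb{A})$ from \Cref{th-der-aut-correspondence-Hahn-field}. Your extra paragraph recalling why $(\tmop{Der}^+_{\prec}(\mathbb{A}),\ast)$ is divisible and torsion-free simply reproduces the paper's proof of \Cref{cor-divisible-torsion-free}, so nothing is missing.
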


\subsubsection*{Acknowledgements}
    This project was initiated at the Fields Institute in June 2022 and resumed at the Banff International Research Station in February 2024 as well as \textsl{ddg40 : Structures algébriques et ordonnées} at Observatoire Océanologique de Banyuls-sur-Mer in August 2025.
    We wish to thank all institutions for their hospitality.
    Lothar Sebastian Krapp was partially supported by the Messmer Stiftung, the Vector Stiftung and the Network Platform \textit{Connecting Statistical Logic, Dynamical Systems and Optimization},
    Salma Kuhlmann and Michele Serra by the AFF of the University of Konstanz, and
    Daniel Panazzolo by the Project NonSper ANR-23-CE40-0028.

	\Addresses

\newpage 
\section*{Appendix: Toward a full correspondence}\label{Appendix}

\subsection*{Decomposing valuation preserving automorphisms}

Let $k$ be a (linearly) ordered field and let $G$ be a linearly ordered abelian group. There is a natural ordering on the field $\mathbb{K}\assign k(\!(G)\!)$ for which it is an ordered field \cite{Kuhl:Buch}. The field $\mathbb{K}$ is a Hahn field as per {\cite{Serra:phd,KuhlSer:vaut}}.

Let $v$-$\tmop{Aut}^+_k(\mathbb{K})$ denote the group of strongly linear automorphisms of the algebra $\mathbb{K}$ with $a\asymp b \Rightarrow \sigma(a) \asymp \sigma(b)$ for all $a,b \in U(\mathbb{K})$ where $a \asymp b$ if and only if  $a \not\prec b$ and $b \not\prec a$. By {\cite[Theorem~3.7.1]{KuhlSer:vaut}}, the group $v$-$\tmop{Aut}^+_k(\mathbb{K})$ is the semi-direct product of the following three subgroups:

\begin{enumeratealpha}
  \item The group $1$-$\tmop{Aut}_k^+ (\mathbb{K})$.
  
  \item The group $G$-$\operatorname{Exp}(\mathbb{K})$ of maps \[\Psi_x \colon a \mapsto \sum_{g \in G} x (g)
  t^g\] where $x \in \tmop{Hom} ((G, +), (k^{\times}, \cdot))$.
  
  \item The group $o$-$\tmop{Aut}(G)$ of maps \[a \mapsto \sum_{g \in G} a (g) t^{\mu (g)}\]
  where $\mu \in \tmop{Aut} (G, +, <)$.
\end{enumeratealpha}

\subsection*{Prelogarithms}

Write $\mathbb{K}_{\succ}\assign\{a \in \mathbb{K} \ \colon \ \tmop{supp} a <0\}$. The linearly ordered group $(\mathbb{K},+,0)$ has {\cite[Theorems~1.4 and~1.8]{Kuhl:Buch}} an additive lexicographic decomposition \[\mathbb{K} = \mathbb{K}_{\succ} + k + \mathbb{K}^{\prec},\] whereas $(\mathbb{K}^{>0},\cdot,1,<)$ has a multipicative decomposition \[\mathbb{K}^{>0} = t^G \cdot k^{>0} \cdot (1+\mathbb{K}^{\prec}).\]

A prelogarithm is an embedding $\log \colon(\mathbb{K}^{>0},\cdot,1,<) \longrightarrow (\mathbb{K},+,0,<)$. It is compatible with the valuation $v$ on $\mathbb{K}$ if for all $a >0$, we have $v (\log(a)) \geq 0 \Longleftrightarrow v(a) \geq 0$ and $v(\log(a))>0 \Longleftrightarrow v(a-1) >0$. A logarithm is a surjective prelogarithm. The existence of a compatible logarithm is equivalent {\cite[Lemma~1.21]{Kuhl:Buch}} to the existence of the three following isomorphisms of ordered groups.

\begin{enumeratealpha}
  \item A right logarithm, i.e.\ an isomorphism $(1 +\mathbb{K}^{\prec}, \cdot, 1, <) \longrightarrow (\mathbb{K}^{\prec}, +, 0,
  <)$.
  
  \item A middle logarithm, i.e.\ an isomorphism $(k^{> 0}, \cdot, 1, <) \longrightarrow (k, +, 0, <)$.
  
  \item A left logarithm, i.e.\ an isomorphism $(t^G, \cdot, 1, <) \longrightarrow (\mathbb{K}_{\succ}, +, 0, <)$.
\end{enumeratealpha}
This is illustrated in the following picture
{\center{\raisebox{-0.5\height}{\includegraphics[width=12.9077298963663cm,height=4.95805457169094cm]{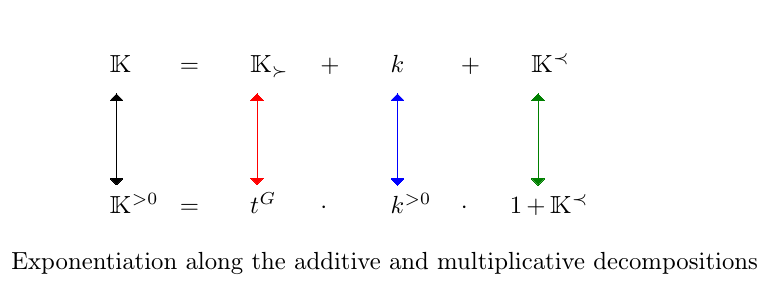}}}}

Although prelogarithms always exist, no right logarithm exsits if $G$ is non-trivial {\cite[Theorem~1]{KuhlShelah}}. This obstruction can be circumvented by considering directed unions of fields of Hahn series, such as $\log$-$\exp$ series, or fields of EL-series {\cite[Chap~5, Section~2]{Kuhl:Buch}}.

One can interpret the formal $\exp$-$\log$ correspondence between each $\tmop{Der}_{\prec}^+(\mathbb{K})$ and $1$-$\tmop{Aut}_k^+(\mathbb{K})$ as a non-commutative generalisation of the natural right logarithm on $\mathbb{K}$, which is the isomorphism \[(1+\mathbb{K}^{\prec},\cdot,<) \longrightarrow (\mathbb{K}^{\prec} ,+,<)  \ ; \ 1+\varepsilon \mapsto \sum \limits_{m \in \mathbb{N}} \frac{(-1)^m\varepsilon^{m+1}}{m}.\]

We want to investigate how this correspondence extends to the group $v$-$\tmop{Aut}^+_k(\mathbb{K})$, mapping it to an appropriate Lie subalgebra of $\tmop{Der}^+(\mathbb{K})$. Let us first extend the correspondence to the subgroup $\tmop{IntAut}^+_k(\mathbb{K})\assign G$-$\tmop{Exp}(\mathbb{K}) \ltimes 1$-$\tmop{Aut}^+_k(\mathbb{K})$ on $\mathbb{K}$.

\subsection*{The middle correspondence}

Let $G$-$\tmop{Der} (\mathbb{K})$ denote the commutative group, under
pointwise sum, of (strongly linear) derivations $\mathd_{\alpha}$ of the form
\[ \forall a \in \mathbb{K}, \mathd_{\alpha} (\sum_{g \in G} a (g) t^g) =
   \sum_{g \in G} \alpha (g) a (g) t^g \]
where $\alpha \in \tmop{Hom} ((G, +), (k, +))$. 

Note that $\tmop{Hom} ((G, +), (k, +))$ and
$G$-$\tmop{Der} (\mathbb{K})$ are vector spaces and $\mathd_{\cdot}$ is a $k$-linear isomorphism. Let
$\mathe \colon (k, +, <) \longrightarrow (k^{> 0}, \cdot, <)$ be an isomorphism of ordered groups, i.e.\ the inverse of a middle logarithm on $\mathbb{K}$.
Writing $k^{\times}$ as the direct product $k^{\times}\simeq \{1, - 1\}
\times k^{> 0}$, we obtain
\begin{eqnarray*}
  \tmop{Hom} ((G, +), (k^{\times}, \cdot)) & \simeq & \tmop{Hom} ((G, +),
  (\{1, - 1\}, \cdot)) \times \tmop{Hom} ((G, +), (k^{> 0}, \cdot))\\
  & \simeq & \tmop{Hom} ((G, +), (\mathbb{Z}_2, +)) \times \tmop{Hom} ((G,
  +), (k, +))
\end{eqnarray*}
We have $\tmop{Hom} ((G, +), (\mathbb{Z}_2, +)) \simeq \tmop{Hom} ((G / 2 G,
+), (\mathbb{Z}_2, +))$ which is trivial if and only if $G$ is $2$-divisile.
In that case, we have an isomorphism
\[ \tmop{Hom} ((G, +), (k, +)) \longrightarrow \tmop{Hom} ((G, +),
   (k^{\times}, \cdot)) \: ; \: \alpha \mapsto \mathe \circ \alpha . \]
which yields a Der-Aut correspondence $G$-$\tmop{Der}(\mathbb{T}) \longrightarrow G$-$\tmop{Exp}(\mathbb{T}) \ ;\ \mathd_{\alpha} \mapsto \psi_{\mathe \circ \alpha}$.

\subsection*{Toward a left correspondence}

Consider the Lie algebra $\tmop{Der}^+_{\preceq}(\mathbb{K})$ of strongly linear derivations $\partial \colon \mathbb{K} \longrightarrow \mathbb{K}$ such that $v(\partial(a))\geq v(a)$ for all $a \in \mathbb{K}$. This is the semi-direct product of the Lie algebra $G$-$\tmop{Der}(\mathbb{K})$ with the Lie ideal $\tmop{Der}^+_{\prec}(\mathbb{K})$. Combining the right and middle $\tmop{Der}$-$\tmop{Aut}$ correspondences, we obtain an isomorphism between $\tmop{Der}^+_{\preceq}(\mathbb{K})$ and the group $\tmop{IntAut}_k^+(\mathbb{K})$ of internal strongly linear $v$-automorphisms of $\mathbb{K}$. This is summed up in the following
picture, where the upper (respectively lower) decomposition is given by semi-direct
internal products of groups (resp Lie algebras).

{\center{\raisebox{-0.5\height}{\includegraphics[width=12.9077298963663cm,height=4.95805457169094cm]{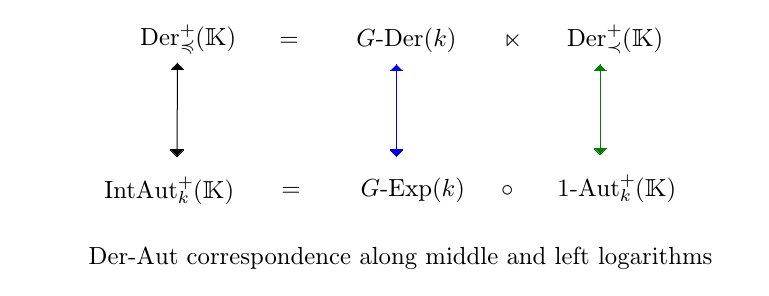}}}}

What of the left part of the decomposition of $v$-$\tmop{Aut}(\mathbb{K})$? Under what conditions on $\mathbb{K}$ is there a corresponding Lie algebra $\mathbf{D}$ of strongly linear derivations such that $\tmop{Der}^+_{\preceq}(\mathbb{K})$ is a Lie ideal of $\mathbf{D}+\tmop{Der}^+_{\preceq}(\mathbb{K})$ and that there exists a bijective correspondence between $\mathbf{D}$ and $o$-$\tmop{Aut}(G)$?

 In some cases, automorphisms of $(G,+,<)$ have been shown {\cite[Proposition~4.9]{KuhlMat:1}} to induce strongly linear derivations on $\mathbb{K}$. Furthermore, the derivations can be chosen compatible with a specific prelogarithm on $\mathbb{K}$ {\cite[Section~3]{KuhlMat:2}}.

\medskip

\noindent\textbf{Future work.} We plan to investigate this construction method in order to obtain further derivations coming from automorphisms on $G$.


\begin{thebibliography}{10}

\addcontentsline{toc}{section}{Bibliography}
	
  
  \bibitem{BM19}\textsc{A.~Berarducci  and  V.~Mantova}. {\newblock}Transseries as
  germs of surreal functions. {\newblock}\textit{Transactions of the
  American Mathematical Society}, 371:3549--3592, 2019.{\newblock}

  \bibitem{Bou:top}
  \textsc{N.~Bourbaki}, 
  \textsl{Topological vector spaces},
  {\newblock}Springer Berlin, 1st  edition, 1987.{\newblock}
  
  \bibitem{Bou:alg}\textsc{N.~Bourbaki}. {\newblock}\textit{Algèbre}.
  {\newblock}Springer Berlin, 2nd  edition, 2007.{\newblock}
  
  \bibitem{vdH:loghyp}\textsc{L.~van~den Dries}, \textsc{J.~van~der Hoeven}, and \textsc{E.~Kaplan}.
  {\newblock}Logarithmic hyperseries. {\newblock}\textit{Transactions of the
  American Mathematical Society}, 372, 2019.{\newblock}
  
  \bibitem{Ec92}\textsc{J.\ Ecalle}. {\newblock}\textit{Introduction aux fonctions
  analysables et preuve constructive}{\newblock}\textit{de la conjecture de Dulac}.
  {\newblock}Actualités Mathématiques. Hermann, 1992.{\newblock}
  
  \bibitem{Fish:phd}\textsc{J.~L.~Fisher}. {\newblock}\textit{Structure theorems
  for noncommutative complete local rings}. {\newblock}doctoral thesis, Caltech,
  1969.{\newblock}

  \bibitem{Fre:strong}
  \textsc{P.\ Freni},
  `Structural Investigations in some
Classes of o-minimal Fields',
  doctoral thesis, University of Leeds, 2025,\linebreak
  \href{https://etheses.whiterose.ac.uk/id/eprint/36168}{https://etheses.whiterose.ac.uk/id/eprint/36168}.
  
  \bibitem{Hahn1907}\textsc{H.~Hahn}. {\newblock}Über die nichtarchimedischen
  Größensysteme. {\newblock}\textit{Sitz. Akad. Wiss. Wien},
  116:601--655, 1907.{\newblock}
  
  \bibitem{Hall:Lie}
  \textsc{B.~C.~Hall},
  \textsl{Lie Groups, Lie Algebras, and Representations: An Elementary Introduction},
  Grad.\ Texts Math.\ 222, 2nd edn
  (Springer, Cham, 2015),
  \href{https://doi.org/10.1007/978-3-319-13467-3}{doi:10.1007/978-3-319-13467-3}.
  
  \bibitem{Hig52}\textsc{G.~Higman}. {\newblock}Ordering by Divisibility in Abstract
  algebras. {\newblock}\textit{Proceedings of the London Mathematical
  Society}, s3-2(1):326--336, 1952.{\newblock}
  
  \bibitem{vdH:phd}
  \textsc{J.~van~der Hoeven},
  `Asymptotique automatique', 
  doctoral thesis, Université Paris VII,
  1997.
  
  \bibitem{vdH:noeth}
  \textsc{J.~van~der Hoeven},
  `Operators on generalized power series',
  \textsl{Ill.\ J.\ Math.} 45 (2001) 1161--1190,
  \href{https://doi.org/10.1215/ijm/1258138061}{doi:10.1215/ijm/1258138061}.
  
  
  \bibitem{KKS:Rayner}
  \textsc{L.~S.~Krapp}, \textsc{S.~Kuhlmann} and \textsc{M.~Serra},
  `On Rayner structures',
  \textsl{Comm.\ Algebra} 50 (2022) 940--948, 
  doi:10.1080/00927872.2021.1976789.
  
  \bibitem{Krull32}\textsc{W.~Krull}. {\newblock}Allgemeine Bewertungstheorie.
  {\newblock}\textit{Journal für die reine und angewandte Mathematik},
  167:160--196, 1932, \href{https://doi.org/10.1515/crll.1932.167.160}{https://doi.org/10.1515/crll.1932.167.160}.{\newblock}

  \bibitem{Kuhl:Buch}
  \textsc{S.~Kuhlmann}, 
    \textsl{Ordered Exponential Fields}, 
    Fields Inst.\ Monogr. 12 (Amer.\ Math.\ Soc., Providence, RI, 2000), 
    \href{https://doi.org/10.1090/fim/012}{doi:10.1090/fim/012}.

  \bibitem{KuhlMat:1}\textsc{S.~Kuhlmann}  and  \textsc{M.~Matusinski}. {\newblock}Hardy type derivations on fields of exponential logarithmic series.{\newblock} Journal of Algebra, 345:171--189, 2011.
  {\newblock}\textit{Journal of Algebra}, 605:339--376, 2022.{\newblock}

  \bibitem{KuhlMat:2}\textsc{S.~Kuhlmann}  and  \textsc{M.~Matusinski}. {\newblock} Hardy type derivations in generalized series fields. 
  {\newblock}\textit{Journal of Algebra}, 351:185--203, 2012.{\newblock}

  \bibitem{KuhlShelah}\textsc{F-V. Kuhlmann}, \textsc{S.~Kuhlmann}  and \textsc{S.~Shelah}. {\newblock}Exponentiation in power series fields.
  {\newblock}\textit{Proc. of the Am. Math. Soc.}, 125(11):3177--3183, 1997.{\newblock}
  
  \bibitem{KuhlSer:vaut}
  \textsc{S.~Kuhlmann} and \textsc{M.~Serra},
  `The automorphism group of a valued field of generalised formal power series',
  \textsl{J.\ Algebra} 605 (2022) 339--376,
  \href{https://doi.org/10.1016/j.jalgebra.2022.04.023}{doi:10.1016/j.jalgebra.2022.04.023}.
  
  \bibitem{Lam:non-com}
  \textsc{T.~Y.~Lam},
  \textsl{A First Course in Noncommutative Rings},
  Grad.\ Texts Math.\ 131, 2nd edn
  (Springer, New York, 2001),
  \href{https://doi.org/10.1007/978-1-4419-8616-0}{doi:10.1007/978-1-4419-8616-0}.
  
  \bibitem{Praa86}\textsc{C.~Praagman}. {\newblock}Iterations and logarithms of
  formal automorphisms. {\newblock}\textit{Aequationes Mathematicae},
  30:151--160, 1986.{\newblock}

  \bibitem{reutenauer} \textsc{C.~Reutenauer}, \textsl{Free Lie Algebras}, Lond.\ Math.\ Soc.\ Monogr., New. Ser.\ 7 (Clarendon Press, Oxford, 1993), \href{https://doi.org/10.1093/oso/9780198536796.001.0001}{doi:10.1093/oso/9780198536796.001. 0001}.

  \bibitem{ribenboim} \textsc{P.~Ribenboim}, Semisimple Rings and Von Neumann Regular Rings of Generalized Power Series. \textsl{Journal of Algebra}, 198(2):327--338, 1997,
\href{https://doi.org/10.1006/jabr.1997.7063}{doi:10.1006/jabr.1997.7063}.

  \bibitem{schilling} \textsc{O.~F.~G.~Schilling}, 
  `Automorphisms of fields of formal power series', 
  \textsl{Bull.\ Am.\ Math.\ Soc.} 50 (1944) 892--901,
  \href{https://doi.org/10.1090/S0002-9904-1944-08259-1}{doi:10.1090/S0002-9904-1944-08259-1}.
  
  \bibitem{Serra:phd}\textsc{M.~Serra}. \textit{Automorphism groups of
  Hahn groups and Hahn fields}. doctoral thesis, Universität Konstanz,\linebreak
  2021, \href{https://kops.uni-konstanz.de/handle/123456789/55789}{https://kops.uni-konstanz.de/handle/123456789/55789}.{\newblock}
  
  \bibitem{Serre:Lie}
  \textsc{J.-P.~Serre},
  \textsl{Lie Algebras and Lie Groups: 1964 Lectures given at Harvard University},
  Lect.\ Notes Math.\ 1500, 2nd edn, corrected 5th printing 2006 (Springer, Berlin, 1992),
  \href{https://doi.org/10.1007/978-3-540-70634-2}{doi:10.1007/978-3-540-70634-2}.
  
\end{thebibliography}
\end{document}